\numberwithin{equation}{section}
\newtheorem{theorem}{Theorem}[section]
\newtheorem{lemma}[theorem]{Lemma}
\newtheorem{remark}[theorem]{Remark}
\newtheorem{corollary}[theorem]{Corollary}
\newtheorem{definition}[theorem]{Definition}
\begin{document}
\title{On the Entire Radial Solutions of the {C}hern-{S}imons SU$(3)$ System}
\author{Hsin-Yuan Huang\,$^1$ and Chang-Shou Lin\,$^2$}
\date{\small\it $^1$Department of Applied Mathematics, National Sun Yat-sen University,\\ Kaoshiung 804, Taiwan\vspace{.3cm}\\
$^2$Taida Institute for Mathematical Sciences, Center for Advanced Study in Theoretical Science,     \\National Taiwan University, Taipei, 106, Taiwan}
\maketitle
\begin{abstract}

In this paper, we study the entire radial  solutions of the self-dual equations arising from the relativistic  {${\rm SU}(3)$} Chern-Simons model proposed by Kao-Lee\cite{KL} and  Dunne\cite{D1,D2}. Understanding the structure of entire radial solutions is one of  fundamental issues for the system of nonlinear equations. In this paper, we prove any entire radial solutions must be one of  topological, non-topological and mixed type solutions, and completely classify the asymptotic behaviors at infinity of these solutions.
Even for radial solutions, this classification has remained an open problem for many years. As an application of this classification, we prove that the two components $u$ and $v$ have intersection at most finite times.

\end{abstract}

\section{Introduction}

The relativistic self-dual Abelian Chern-Simons  model was proposed by Jackiw-Weinberg\cite{JW} and Hong-Kim-Pac\cite{HKP} to study the physics of high critical temperature super conductivity. The corresponding Chern-Simons equation has been studied  in a variety of different nature.   We refer the reader to Wang\cite{Wang}, Spruck-Yang\cite{SY}, Caffarelli-Yang\cite{CY}, Tarantello \cite{T1}, Chae-Imanuvilov\cite{CI}, Nolasco-Tarantello \cite{NT} \cite{NT1}, Chan-Fu-Lin\cite{CFL}, Choe\cite{choe}, Lin-Yan\cite{LY} and Choe-Kim-Lin\cite{CKL} for the recent developments.\par

In this paper, we are interested in the non-Abelian Chern-Simons  model  proposed by Kao-Lee  \cite{KL}  and Dunne\cite{D1,D2}. This model is defined in the $(2+1)$ Minkowski space ${\mathbb R}^{1,2}$, and the gauge group is a compact Lie group with a semi-simple Lie algebra $\mathcal{G}$. The Chern-Simons Lagrangian density in  $2+1$ dimensional spacetime involves the Higgs field $\phi$  and the gauge potential $A=(A_0,A_1,A_2)$.
We restrict to consider the energy minimizers of the Lagrangian functional, and thus obtain the self-dual Chern-Simons equations
\begin{equation}\label{e001}
\begin{split}
&D_{-}\phi=0\\
&F_{+-}=\frac{1}{k^2}\Big[\phi-[[\phi,\phi^{\dag}],\phi], \phi^{\dag}\Big]
\end{split}
\end{equation}
where $D_{-}=D_1-iD_2$, and $F_{+-}=\partial_{+}A_{-}-\partial_{-}A_{+}+[A_{+},A_{-}]$
with $A_{\pm}=A_{1}\pm iA_{2}$ and $\partial{\pm}=\partial_1\pm i\partial_2$.
Dunne considered a simplified form of the self-dual system \eqref{e001},
in which the fields $\phi$ and $A$ are algebraically restricted:
$$\phi=\sum_{a=1}^r\phi^{a}E_a$$
where $r$ is the rank of the gauge Lie algebra,  $E_a$ is the simple root step operator, and $\phi^{a}$ are complex-valued functions. Let
$$u_a=\log|\phi^a|,\,\,a=1,2,\cdots,r. $$
Then equations \eqref{e001} can be reduced to the following system of  equations
\begin{equation}\label{e013}
\Delta u_a+\frac{1}{k^2}\Big(\sum_{b=1}^rK_{ab}e^{u_b}-\sum_{b=1}^r e^{u_b}K_{bc}e^{u_c}K_{ac}\Big)=4\pi\sum_{j=1}^{N_a}\delta_{p_j^a}  , \,\,\,a=1,\cdots,r,
\end{equation}
where $K=(K_{ab})$ is the Cartan matrix of a semi-simple Lie algebra, $\{p_j^a\}_{j=1,\cdots, N_a}$ are zeros of $\phi^a$ ($a=1,\cdots r$), and $\delta_{p}$ is the Dirac measure concentrated at $p$ in $\mathbb{R}^2$.
We refer to \cite{Yang2}  for the details from \eqref{e001} to \eqref{e013}.\par
Suppose $K$ satisfies $\sum_{b=1}^r(K^{-1})_{ab}>0$, $a=1,\cdots r$. A solution of \eqref{e013}, $(u_a)_{a=1,\cdots,r}$, verifying the asymptotic condition $$\hspace{2cm}\lim_{|x|\to\infty}u_a(x)=\log \Big(\sum_{b=1}^r(K^{-1})_{ab}\Big),\,\,a=1,\cdots, r, \leqno(I)$$
is called a {\bf topological solution}; a solution of \eqref{e013}, $(u_a)_{a=1,\cdots,r}$, verifying the asymptotic condition
$$\hspace{2cm}\lim_{|x|\to\infty}u_a(x)=-\infty,\,\,a=1,\cdots, r,\leqno(II) $$
is called a {\bf non-topological solution}.  Yang in \cite{Yang1} obtained the existence of topological solutions for \eqref{e013} in $\mathbb{R}^2$
based on variational methods and a Cholesky decomposition technique.\par

%
In this paper, we consider the case when the gauge group is $SU(3)$ and the corresponding Cartan matrix
$K=\left(\begin{array}{cc}
2 & -1 \\
-1 & 2 \\
\end{array}\right).$
See \cite{NT,LY1,ALW} for the recent developments.\par

Suppose there is only one vortex at origin. Then the equations \eqref{e013} become
\begin{equation}\label{QAZ}
\left\{
\begin{split}
\Delta u&=-2e^u+e^v +4e^{2u}-2e^{2v}-e^{u+v} +4\pi N_1\delta_0\\
\Delta v&=e^u-2e^v -2e^{2u}  +4e^{2v}-e^{u+v}+4\pi N_2\delta_0
\end{split}
\hspace{1cm}\text{ in }\,\mathbb{R}^2.
\right.
\end{equation}
Here  $(u,v)=(u_1,u_2)$, $N_1$, $N_2\geq 0$, and without loss of generality, we assume $k=1$.\par
The purpose of this paper is to study the asymptotic behaviors at infinity of the entire radial solutions to \eqref{QAZ}.
Let $r=|x|$. Due to the singularity at $0$, $u$ and $v$ are assumed to satisfy
\begin{equation}\left\{
\begin{array}{l}
u(r)=2N_1 \log r+O(1),\\
v(r)=2N_2 \log r+O(1),
\end{array}\right.
\mbox{  as  }r\to 0^+.
\end{equation}
Conventionally, an entire solution might be classified as topological or non-topological solution according to its boundary condition at $\infty$. However, there might be another new type of solution, {\bf mixed-type solution}:
$$
 \begin{array}{c}
 (u(r),v(r))\to(\log\frac{1}{2},-\infty)\mbox{  as  }{r\to\infty}.\\
                          \mbox{    and }\\
                               (u(r),v(r))\to(-\infty, \log\frac{1}{2})\mbox{  as  }{r\to\infty}.\end{array}\leqno(III)
$$
We note that this type is new. The existence of the mixed-type solution has been asked by Nolasco-Tarantello\cite{NT} and this problem has remained an open problem since then.  One of the purposes in this series of paper is to answer this question.  Our first result
is the following:
\begin{theorem}\label{thm2}
Let  $(u(r),v(r))$ be an entire radial  solutions of \eqref{QAZ}. Then
 $$u(r)< 0\mbox{   and  }v(r)< 0\mbox{   for    }r\in(0,\infty),$$
  unless $u(r)=v(r)=0$ for $r\in(0,\infty)$.
\end{theorem}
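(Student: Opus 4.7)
The plan is to reduce the proof to a maximum-principle argument on the scalar function $\phi := u + v$, using a clean factorization that emerges from summing the two equations of \eqref{QAZ}, and then to bootstrap the resulting sign of $\phi$ to the individual signs of $u$ and $v$.

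Adding the two equations and invoking the identity $2e^{2u} + 2e^{2v} - 2e^{u+v} = e^{2u} + e^{2v} + (e^u - e^v)^2$ yields, away from the origin,
\begin{equation*}
\Delta(u+v) = g(u,v), \qquad g(u,v) := e^u(e^u-1) + e^v(e^v-1) + (e^u - e^v)^2,
\end{equation*}
with a boundary contribution $4\pi(N_1+N_2)\delta_0$ at the origin. The key algebraic fact is that $g \geq 0$ on the half-plane $\{u + v \geq 0\}$, with equality only at $(u,v) = (0,0)$. To check this, set $a = e^u$ and $b = e^v$, so that $g = 2a^2 + 2b^2 - 2ab - a - b$; restricting to the hyperbola $ab = c$ with $c \geq 1$ and minimizing in one variable, the minimum $2\sqrt{c}(\sqrt{c} - 1)$ is attained at $a = b = \sqrt{c}$ and vanishes precisely when $c = 1$.

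With this identity I would first prove that $\phi \leq 0$ everywhere, for any solution with $(u,v) \not\equiv (0,0)$. In radial form the equation reads $(r\phi'(r))' = r\, g(u(r), v(r))$ for $r > 0$. If $\phi(r_\ast) > 0$ somewhere, take the connected component $(a_0, b_0)$ of $\{\phi > 0\}$ containing $r_\ast$; on this component $g > 0$ strictly (since $(u,v) \neq (0,0)$ there), so $r\phi'$ is strictly increasing. A bounded component forces $\phi'(a_0^+) \geq 0$ and $\phi'(b_0^-) \leq 0$, which contradicts strict monotonicity of $r\phi'$. I expect the main obstacle to be the unbounded case $(a_0, \infty)$: on it $\phi$ must increase monotonically, and one has to exclude the growth scenario $\phi \nearrow +\infty$. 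This should follow from the observation that once $\phi$ is large the terms $4e^{2u}$ and $4e^{2v}$ in \eqref{QAZ} force $\Delta u$ and $\Delta v$ to grow super-exponentially, producing finite-$r$ blow-up incompatible with a globally defined radial solution.

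Once $u + v \leq 0$ is established, I would apply the maximum principle to $u$ and $v$ separately. At a positive maximum $r_0$ of $u$ one has $a := e^{u(r_0)} > 1$ and, using $u + v \leq 0$, $b := e^{v(r_0)} \leq 1/a < 1$. A direct rearrangement of the first equation of \eqref{QAZ} at $r_0$ gives
\begin{equation*}
\Delta u(r_0) = a(4a - 2 - b) + b(1 - 2b) > 1 + (-1) = 0,
\end{equation*}
since $a > 1$ and $4a - 2 - b > 4 - 2 - 1 = 1$ force $a(4a - 2 - b) > 1$, while $b < 1$ forces $b(1-2b) > -1$. This contradicts $\Delta u(r_0) \leq 0$, and by symmetry $v \leq 0$. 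For the strict inequality: if $u(r_0) = 0$ at some $r_0 > 0$, then $u$ attains its maximum $0$ at $r_0$, and evaluating the first equation gives $\Delta u(r_0) = 2(1 - e^{2 v(r_0)}) \leq 0$, which together with $v(r_0) \leq 0$ forces $v(r_0) = 0$. Then $r_0$ is also a maximum of $v$, so $u'(r_0) = v'(r_0) = 0$, and Cauchy uniqueness for the regular ODE system on $r > 0$ forces $(u, v) \equiv (0, 0)$, contrary to assumption.
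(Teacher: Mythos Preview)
Your approach is genuinely different from the paper's and, in several respects, cleaner. The paper works directly with $F_U(r)=\max(u(r),v(r))$: it first checks that $F_U$ cannot have a positive interior maximum by a sign computation, and then excludes the possibility that $F_U>0$ without attaining a maximum via a fairly long blow-up lemma (Lemma~\ref{blowup}) that tracks the relative positions and derivatives of $u$ and $v$ through several cases. Your route --- first proving $u+v\le 0$ via the summed equation and the algebraic fact that $g(u,v)\ge 0$ on $\{u+v\ge 0\}$, then bootstrapping to $u\le 0$ and $v\le 0$ separately --- avoids that case analysis and isolates a pleasant scalar structure. The factorization $g=2s^2-s-6p$ in the symmetric variables $s=e^u+e^v$, $p=e^{u+v}$, with minimum $2\sqrt{p}(\sqrt p-1)$ at fixed $p\ge 1$, is a nice observation the paper does not use.

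Two points deserve tightening. First, your blow-up sketch for the unbounded component of $\{\phi>0\}$ appeals to the terms $4e^{2u}$ and $4e^{2v}$ in the \emph{individual} equations, but $\phi\to\infty$ does not force both $u$ and $v$ to be large; one of them may stay bounded or tend to $-\infty$. It is much cleaner to stay with your own summed equation: from your minimization you already have $g\ge 2e^{\phi}-2e^{\phi/2}$ on $\{\phi>0\}$, hence in $t=\log r$ the function $\psi=\phi+2t$ satisfies $\psi''\ge c\,e^{\psi}$ with $\psi'>0$ once $\phi$ is moderately large, and the standard energy trick gives finite-time blow-up. This is exactly the device the paper uses in Step~3 of Lemma~\ref{blowup}, applied to a different quantity. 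Second, and more substantively, in your bootstrap step you write ``at a positive maximum $r_0$ of $u$'' without explaining why such a maximum exists. Your computation in fact shows more: $\Delta u>0$ \emph{pointwise} wherever $u>0$ and $u+v\le 0$, not merely at a maximum. Feeding this into the same connected-component argument you used for $\phi$ handles bounded components immediately, and for an unbounded component one gets $u$ strictly increasing with $u_{tt}\ge c\,e^{2(u+t)}$ once $u$ is large, yielding the same blow-up contradiction. Without this, the step as written has a genuine gap.
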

Our second result is the main result of this paper.
\begin{theorem}\label{cor1}
Suppose $(u(r),v(r))$ is an entire solution to equations \eqref{QAZ}. Then $(u(r),v(r))$ must be one of  types (I), (II) and (III)  described above.
\end{theorem}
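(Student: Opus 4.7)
My plan is to perform an asymptotic analysis of the radial ODE system
\[
u'' + \tfrac{1}{r}\,u' = f_1(u, v), \qquad v'' + \tfrac{1}{r}\,v' = f_2(u, v), \qquad r > 0,
\]
where $f_1, f_2$ denote the right-hand sides of \eqref{QAZ}, and to show that any entire radial solution must approach one of the zeros of the vector field $F = (f_1, f_2)$ at infinity. As a preparatory step I would solve $f_1 = f_2 = 0$ on $\{(u, v) : u \le 0,\ v \le 0\}$: setting $a = e^u$, $b = e^v$, adding and subtracting the two equations, and factoring gives $(a - b)(2(a+b) - 1) = 0$ together with $a + b = 2(a^2 - ab + b^2)$, whose only solutions are $(0, 0)$, $(-\infty, -\infty)$, $(\log\tfrac{1}{2}, -\infty)$, and $(-\infty, \log\tfrac{1}{2})$, matching types (I), (II), and the two branches of (III) exactly. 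Since Theorem~\ref{thm2} confines the solution to this quadrant, the remaining work is to prove convergence of $(u, v)$ at infinity and to rule out any other limits.

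The main tool I would build is a Lyapunov function reflecting the Cartan-matrix structure. Looking for a potential $W(u, v)$ with $W_u = -(2f_1 + f_2)$ and $W_v = -(f_1 + 2f_2)$, the integrability condition $\partial_v W_u = \partial_u W_v$ is easily verified and yields
\[
W(u, v) = 3\bigl(e^u + e^v - e^{2u} - e^{2v} + e^{u+v}\bigr) = 3e^u(1 - e^u) + 3e^v(1 - e^v) + 3e^{u+v},
\]
which is non-negative throughout the quadrant. Setting $E(r) := (u')^2 + u'v' + (v')^2 + W(u, v)$, a direct calculation using the radial equations gives
\[
E'(r) = -\tfrac{2}{r}\bigl[(u')^2 + u'v' + (v')^2\bigr] \le 0,
\]
since $(u')^2 + u'v' + (v')^2 = (u' + v'/2)^2 + \tfrac{3}{4}(v')^2 \ge 0$. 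Thus $E$ is non-negative and monotone decreasing, so $E(r) \to E_\infty \ge 0$ and $\int_1^\infty \tfrac{1}{r}[(u')^2 + u'v' + (v')^2]\,dr < \infty$. Crucially $W$ takes the distinct values $3$, $0$, and $\tfrac{3}{4}$ at the three equilibrium types, so $E_\infty$ effectively tags the type.

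To upgrade $E(r) \to E_\infty$ to pointwise convergence of $(u, v)$, I would split into cases on $\liminf u$ and $\liminf v$. If both are finite, then $(u, v)$ stays in a compact subset of $(-\infty, 0]^2$ and the translates $(u(r_n + \cdot), v(r_n + \cdot))$ with $r_n \to \infty$ subconverge in $C^2_{\mathrm{loc}}$ to a solution of the autonomous system $\ddot u = f_1(u, v), \ddot v = f_2(u, v)$ on $\mathbb{R}$, along which $H := (u')^2 + u'v' + (v')^2 + W$ is conserved. A dyadic-averaging argument on $\int_1^\infty \tfrac{1}{r}[(u')^2 + u'v' + (v')^2]\,dr < \infty$ produces $s_n \to \infty$ with $(u'(s_n), v'(s_n)) \to 0$; conservation of $H$ along the limit orbit then traps it at rest with $H = W(U, V) = E_\infty$, forcing $(U, V)$ to be a zero of $F$, hence $(0, 0)$: type (I). In the complementary case, say $\liminf u = -\infty$ and $\liminf v > -\infty$, the exponentials involving $u$ decay and $v$ is asymptotically governed by the single equation $\Delta v = -2e^v + 4e^{2v} = 2e^v(2e^v - 1)$, whose finite equilibria are $v = -\infty$ and $v = \log\tfrac{1}{2}$; a parallel Lyapunov analysis for this reduced equation (potential $2e^v(e^v - 1)$) pins $v$ to one of these, giving type (III). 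The remaining case, $\liminf u = \liminf v = -\infty$, yields type (II).

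The principal obstacle I anticipate is this upgrade from Lyapunov convergence to genuine pointwise convergence, and in particular the exclusion of oscillation of one component between $-\infty$ and a finite value along different sequences. The dissipation $E' \sim 1/r$ is too weak to rule out such oscillation by integrability alone; the decisive leverage must come from combining (i) invariance of the $\omega$-limit set under the $H$-preserving autonomous flow, (ii) the dyadic averaging that forces this set to contain at least one zero-velocity point, and (iii) the isolated nature of the equilibria of $F$ together with the linearizations at each (eigenvalues $1, 9$ at $(0, 0)$; eigenvalue $1$ in the effective single-equation at $(\log\tfrac{1}{2}, -\infty)$). Once oscillation is ruled out, the equilibrium classification forces the solution into exactly one of types (I), (II), (III).
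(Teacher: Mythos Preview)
Your Lyapunov function $E(r)=(u')^2+u'v'+(v')^2+W(u,v)$ with $W=3(e^u+e^v-e^{2u}-e^{2v}+e^{u+v})$ is correct, and the identity $E'(r)=-\tfrac{2}{r}\bigl[(u')^2+u'v'+(v')^2\bigr]$ is exactly the differential form of the paper's Pohozaev identity \eqref{qwe008}: one has $r^2E(r)=2\int_0^r sW(s)\,ds+4(N_1^2+N_1N_2+N_2^2)$. So the monotonicity of $E$ and the integral bound $\int_1^\infty \tfrac{1}{r}Q(r)\,dr<\infty$ are not new information beyond what the paper already uses. The equilibrium computation is also fine and coincides with Lemma~\ref{remark1}.

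The genuine gap is the one you yourself flag, and it is not closed by the sketch you give. In the case $\liminf u=-\infty$, $\liminf v>-\infty$ you assert that ``the exponentials involving $u$ decay and $v$ is asymptotically governed by the single equation''; but $\liminf u=-\infty$ does not imply $e^{u}\to0$, so this reduction is unjustified. In the case $\liminf u=\liminf v=-\infty$ you simply declare type~(II) without argument; again nothing prevents $\limsup u$ from being finite. Even in the compact case, your LaSalle step has a hole: the dyadic averaging only produces a sequence $s_n$ with $(u'(s_n),v'(s_n))\to 0$, hence a point $(U_0,V_0,0,0)$ in the $\omega$-limit set with $W(U_0,V_0)=E_\infty$; this does \emph{not} force $(U_0,V_0)$ to be a zero of the vector field. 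If $F(U_0,V_0)\neq 0$, the autonomous orbit through $(U_0,V_0,0,0)$ is non-constant and lies entirely in the $\omega$-limit set, and nothing in your outline rules out that the original solution shadows such a libration indefinitely. The linearization eigenvalues at the equilibria are irrelevant until you have shown convergence to a neighbourhood of one of them.

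The paper's route is entirely different and is built precisely to sidestep this obstacle. Theorem~\ref{thm1}, proved over Sections~3--6 through a detailed analysis of $S$-profiles, the sign of $(u+2v)_r$ and $(2u+v)_r$, and the crossings of $u$ and $v$, establishes the a~priori bound $\max(u,v)<\log\tfrac12$ for all large $r$ unless $(u,v)$ is topological. Once this is known, the paper's $f_1,f_2$ are \emph{positive} for large $r$, and then the Pohozaev identity (Lemma~\ref{lemma12}) yields $f_1,f_2\in L^1(\mathbb{R}^2)$ directly; from this $ru_r$ and $rv_r$ have limits and the classification follows from Lemma~\ref{remark1}. In short, the paper first proves the oscillation exclusion (Theorem~\ref{thm1}) and then uses the energy identity, whereas you try to use the energy identity to prove the oscillation exclusion; the dissipation rate $1/r$ is too weak for that, which is why the paper's long Sections~3--6 are not expendable.
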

Very recently, some existence of non-topological solutions has been studied by Ao-Lin-Wei\cite{ALW} by a perturbation from the SU(3) Toda system with singular sources. However, their result is still very limited toward understanding the general theory of non-topological solutions (and mixed-type solutions). Our study of radial entire solution would play a significant role for this purpose.
The classification in Theorem \ref{cor1} is the first major result in this direction.\par

When $u=v$, equation \eqref{QAZ} is reduced to
\begin{equation}\label{e014}
\Delta u+e^{u}(1-e^u)=4\pi N\delta_0.
\end{equation}
For equation \eqref{e014}, it is easy to see that Theorem \ref{cor1} holds for any solution $u$ of \eqref{e014}, i.e. either $u$ is a topological solution or a non-topological solution. Obviously, this statement is equivalent to the claim:
\begin{center}
 for any solution $u$ to \eqref{e014}, $e^u(1-e^u)\in L^1(\mathbb{R}^2)$.
\end{center}
From equation \eqref{e014}, the $L^1$-integrability of $e^u(1-e^u)$ (which is always positive) can be easily obtained by integrating \eqref{e014}.
However, it is not obvious at all that the $L^1$-integrability of the nonlinear terms in equation \eqref{QAZ} holds. In fact, it is not clear whether both nonlinear terms in the right hand side of \eqref{QAZ} are positive for $r>0$.\par
Our proof of Theorem \ref{cor1} is based on the following observation. We split the nonlinear terms in (\ref{QAZ}) into linear combination of $f_1$ and $f_2$, where
$$f_1(u,v)=e^u-2e^{2u}+e^{u+v}\text{    and   }f_2(u,v)=e^{v}-2e^{2v}+e^{u+v}. $$
 Then \eqref{QAZ} becomes
\begin{equation}\label{QWE}
\left\{
\begin{split}
&u_{rr}+\frac{1}{r}u_r=f_2-2f_1+4\pi N_1\delta_0,\\
&v_{rr}+\frac{1}{r}v_r=f_1-2f_2+4\pi N_2\delta_0,
\end{split}
\right.
\,\,r\in(0,\infty).
\end{equation}
For convenience, we denote $f_i(r)=f_i(u(r),v(r))$, $i=1,2$. We observe that $f_1(r)$ and $f_2(r)$ might be positive functions in $(0,\infty)$. Note that if $u>v$(resp. $v>u$), then $f_2(u,v)>0$(resp. $f_1(u,v)$) automatically. So the question is whether $f_i(u,v)$($i=1,2$) is positive or not. We believe that the positivity of both $f_1$  and $f_2$, will play an important role for the further study of uniqueness of solutions of the system \eqref{QAZ}.
As an application of this positivity, we prove the following apriori estimate for not topological solutions of \eqref{QWE}.
\begin{theorem}\label{thm1}
Let $(u,v)$ be an entire radial solution of \eqref{QAZ}. Then

\begin{itemize}
\item[(1)] There exists $R_0\geq 0$ such that $u(r)$ and $v(r)$ are less than $\log\frac{1}{2}$ for $r\geq R_0$.
\item[(2)] $(u,v)$  is a topological solution. Furthermore, 
\begin{itemize}
\item[(a)]  $(u,v)$ is a topological solution if and only if $r(u+v)(r)_r >0$ on $(0,\infty)$.\\
\item[(b)]  Two components $u$ and $v$ have no intersection on $(R_0,\infty)$ and are increasing to $0$  as $r\to\infty$.
\end{itemize}
\end{itemize}
\end{theorem}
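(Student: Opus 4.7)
The engine of the proof is the factorization
$$f_1(u,v) = e^u\bigl(1 - 2e^u + e^v\bigr), \qquad f_2(u,v) = e^v\bigl(1 - 2e^v + e^u\bigr),$$
which, combined with Theorem \ref{thm2} ($u,v<0$), shows that $f_1>0$ whenever $u\le\log\frac{1}{2}$ and $f_2>0$ whenever $v\le\log\frac{1}{2}$. In radial form, the linear combinations $2u+v$ and $u+2v$ decouple the nonlinearities:
$$\bigl(r(2u+v)_r\bigr)_r=-3rf_1,\qquad\bigl(r(u+2v)_r\bigr)_r=-3rf_2,$$
so that each of $r(2u+v)_r$ and $r(u+2v)_r$ has sign and monotonicity dictated by a single $f_i$. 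This is the technical spine of the argument.

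To establish (1), I would proceed by contradiction. If $u(r_n)\ge\log\frac{1}{2}$ along some sequence $r_n\to\infty$, the trichotomy of asymptotic types (I)--(III) from Theorem \ref{cor1} restricts the possible cluster behavior of $(u,v)$ at infinity. In each admissible scenario one obtains an incompatibility: the integrated identity for $2u+v$ combined with the sign of $f_1$ either forces $r(2u+v)_r$ to a limit inconsistent with the prescribed endpoint values, or the mixed-type approach $u\to\log\frac{1}{2}$ is constrained to occur from below by a phase-plane analysis near the equilibrium $(\log\frac{1}{2},-\infty)$ of the reduced (one-component) equation $\Delta u=2e^u(2e^u-1)$. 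The same scheme applied to $v$ via $u+2v$ yields a common $R_0$.

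With (1) secured, on $[R_0,\infty)$ one has $f_1,f_2>0$, hence
$$\bigl(r(u+v)_r\bigr)_r=-r(f_1+f_2)<0 \quad\text{on }(R_0,\infty),$$
making $r(u+v)_r$ strictly decreasing there. For (2)(a), the ``if'' direction is quick: $r(u+v)_r>0$ on $(0,\infty)$ makes $u+v$ strictly increasing and, being bounded above by $0$, convergent to some $L\le 0$; since types (II) and (III) both give $u+v\to-\infty$, Theorem \ref{cor1} forces $L=0$, i.e.\ topological. For the ``only if'' direction, topological type gives $u+v\to 0^-$; if $r(u+v)_r(r_*)\le 0$ for some $r_*>R_0$, the strict decrease of $r(u+v)_r$ would make $(u+v)_r$ strictly negative beyond $r_*$, contradicting $u+v\to 0^-$. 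The range $r\in(0,R_0]$ is handled by propagating positivity backwards from the vortex expansion $r(u+v)_r\to 2(N_1+N_2)$ as $r\to 0^+$ using the same monotonicity, with the trivial solution $u\equiv v\equiv 0$ excluded. For (2)(b), the difference $D=u-v$ satisfies $\Delta D=3(e^u-e^v)\bigl(2(e^u+e^v)-1\bigr)$; in the topological regime $e^u+e^v\to 2>\frac{1}{2}$, and a careful analysis of the radial ODE for $D$ excludes sign changes of $D$ on $(R_0',\infty)$ for some $R_0'\ge R_0$. Combining no-intersection with the monotonicity $(u+v)_r>0$ from (a) and the asymptotic condition $u,v\to 0$ yields that both $u$ and $v$ are individually increasing to $0$.

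The main obstacle is part (1): the coupling of $u$ and $v$ obstructs a decoupled phase-plane analysis, and one cannot rule out a priori that $u$ or $v$ executes persistent excursions above $\log\frac{1}{2}$ through oscillation. The decoupling combinations $2u+v$ and $u+2v$ reduce each integrated identity to a single-signed nonlinearity, but marrying this structure with the asymptotic classification of Theorem \ref{cor1} to exclude all pathological scenarios is the delicate technical step.
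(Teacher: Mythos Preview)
Your proposal has a fatal circularity: you invoke Theorem~\ref{cor1} (the trichotomy into types (I)--(III)) to prove part~(1), and again in part~(2)(a), but in the paper the logical order is reversed. Theorem~\ref{cor1} is proved in Section~7 \emph{using} Theorem~\ref{thm1} as input: once the a~priori bound $u,v<\log\frac{1}{2}$ for large $r$ is established, the Pohozaev identity forces $f_1,f_2\in L^1(\mathbb{R}^2)$, and only then does the classification follow. Without Theorem~\ref{thm1} there is no reason to believe the nonlinear terms are integrable, so no asymptotic type is known, and your contradiction scheme for part~(1) has nothing to contradict.

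The paper's route to part~(1) is entirely different and much more hands-on. It does not assume any asymptotic behavior; instead it tracks the first point $r_1$ where $r(u+v)_r$ vanishes (if it ever does), splits into cases according to the signs of $u_r(r_1)$ and $v_r(r_1)$, and in each case feeds the configuration into a chain of preparatory lemmas (Lemmas~\ref{1}, \ref{co1}, \ref{ll}, \ref{lemma7}, \ref{lemma88}, \ref{lemma8}). These lemmas are pure ODE analysis: they show that once a certain ``$S$-profile'' or monotonicity pattern is present, the local maxima of the upper component must lie below $\log\frac{1}{2}$. The decoupling identities $\bigl(r(2u+v)_r\bigr)_r=-3rf_1$ and $\bigl(r(u+2v)_r\bigr)_r=-3rf_2$ that you correctly identified are indeed central to those lemmas, but they are used locally and in combination with delicate sign-tracking of $\frac{d}{dr}(f_2-2f_1)$, not via a global contradiction against an asymptotic classification. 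The case $r(u+v)_r>0$ on all of $(0,\infty)$ is handled separately in Step~4 of the proof and leads to the topological alternative; this is how the ``only if'' in (2)(a) is obtained, without appealing to Theorem~\ref{cor1}.

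A secondary issue: you seem to read part~(1) as a universal statement about all entire solutions, but for a topological solution $u,v\to 0>\log\frac{1}{2}$, so (1) is false there. The theorem is a dichotomy: either (1) holds, or (2) holds, and the proof establishes exactly this split via the sign of $r(u+v)_r$.
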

In particular, when $N_1=N_2=0$, we have the uniqueness of topological solutions.
\begin{corollary}
Suppose (u,v) is a topological solution of \eqref{QWE} with $N_1=N_2=0$, then $(u(r),v(r)\equiv(0,0)$.
\end{corollary}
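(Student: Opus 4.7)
The plan is to argue by contradiction. Suppose $(u,v)$ is a topological radial solution of \eqref{QWE} with $N_1=N_2=0$ and $(u,v)\not\equiv(0,0)$. By Theorem \ref{thm2}, $u(r),v(r)<0$ on $(0,\infty)$. Since $N_1=N_2=0$, the solution is regular at the origin, so $u_r(0)=v_r(0)=0$; a short analysis of the Taylor expansion at $0$ (using $2u_{rr}(0)=(f_2-2f_1)(u(0),v(0))$ and the analogous formula for $v$, together with Theorem \ref{thm2} to rule out $u(0)=0$ or $v(0)=0$) shows that $u(0),v(0)<0$ strictly in the non-trivial case.

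The next step is to extract the two balance laws. Multiplying each equation of \eqref{QWE} by $r$ and integrating over $(0,\infty)$, the boundary terms $[ru_r]_0^\infty$ and $[rv_r]_0^\infty$ vanish (from regularity at $0$ and exponential decay at $\infty$ for topological solutions, which follows from the linearization around $(0,0)$ having positive eigenvalues $1$ and $9$). This gives $\int_0^\infty r(f_2-2f_1)\,dr=0$ and $\int_0^\infty r(f_1-2f_2)\,dr=0$, which together yield
\[
\int_0^\infty r\,f_1\,dr \;=\; \int_0^\infty r\,f_2\,dr \;=\; 0.
\]
Since by Theorem \ref{thm1}(1) we have $u,v<\log\tfrac12$ for $r\geq R_0$, whence $f_1,f_2>0$ on $[R_0,\infty)$, the vanishing integrals force each $f_i$ to take strictly negative values on some subset of $(0,R_0)$.

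The crux is to combine this with Theorem \ref{thm1}(a), which gives $(u+v)_r>0$ on $(0,\infty)$. Via $(r(u+v)_r)_r=-r(f_1+f_2)$, this is equivalent to $\int_0^r\rho(f_1+f_2)\,d\rho<0$ for every $r>0$, with the integral vanishing at $r=0$ and $r=\infty$. I would then test the sum and difference equations against carefully chosen weights (natural candidates being $r(u+v)$ and $r(u-v)$, or $r(2u+v)$ and $r(u+2v)$ which isolate $f_1$ and $f_2$ respectively) and integrate by parts, with the goal of producing a manifestly sign-definite Pohozaev-type identity. Once one shows that $f_1,f_2\geq 0$ throughout $[0,\infty)$, the vanishing of $\int_0^\infty rf_i\,dr$ forces $f_1\equiv f_2\equiv 0$, and the algebraic system $e^u=(1+e^v)/2$, $e^v=(1+e^u)/2$ immediately gives $e^u=e^v=1$, that is $u\equiv v\equiv 0$, contradicting non-triviality.

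The main obstacle is this final step: the SU(3) system is not a gradient system, so standard Pohozaev identities yield right-hand sides of indefinite sign (for example, $(2u-v)f_1+(2v-u)f_2$ can be positive when one component is close to $0$ while the other is very negative). To close the argument one must exploit the strict monotonicity of $u+v$ together with the non-intersection and individual monotonicity of $u,v$ on $(R_0,\infty)$ from Theorem \ref{thm1}(b)—likely by showing that any negative excursion of $f_i$ on $(0,R_0)$ is incompatible with $(u+v)_r>0$ near $r=0$ given the regular boundary data $u_r(0)=v_r(0)=0$. If instead one can reduce to $u\equiv v$ (e.g.\ via the swap symmetry $(u,v)\leftrightarrow(v,u)$ permitted when $N_1=N_2$), the result follows at once from the Abelian reduction $u_{rr}+u_r/r=e^u(e^u-1)$: multiplying by $r$ and integrating gives $\int_0^\infty r\,e^u(e^u-1)\,dr=0$ with non-positive integrand (since $u\leq 0$), forcing $u\equiv 0$.
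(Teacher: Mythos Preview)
Your proposal contains a genuine error and misses the direct route the paper intends.

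You mis-read Theorem~\ref{thm1}: it is a \emph{dichotomy}. Alternative~(1) (that $u,v<\log\tfrac12$ for $r\ge R_0$) is precisely the case complementary to being topological; for a topological solution one has $u,v\to 0>\log\tfrac12$, so~(1) never applies. Your claim that $f_1,f_2>0$ on $[R_0,\infty)$ is therefore unfounded for topological solutions (in fact, linearizing near $(0,0)$ gives $f_1\sim -2u+v$, $f_2\sim u-2v$, which can carry either sign). Your program of establishing $f_1,f_2\ge 0$ on all of $[0,\infty)$ thus rests on a false premise, and you acknowledge that the remaining step is an obstacle you do not resolve. The suggested reduction to $u\equiv v$ via the swap symmetry is also unjustified: symmetry of the system does not force symmetry of each solution.

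The paper's argument is essentially one line from Theorem~\ref{thm1}(2)(a): a solution is topological if and only if $r(u+v)_r>0$ on $(0,\infty)$. When $N_1=N_2=0$, regularity gives $u_r(0)=v_r(0)=0$, so
\[
r(u+v)_r(r)=-\int_0^r s\,(f_1+f_2)(s)\,ds,
\]
and it suffices to show $(f_1+f_2)(0)>0$ for any nontrivial entire solution, since then $r(u+v)_r<0$ for small $r>0$, contradicting~(2)(a). If $u(0)=v(0)$ then (with $u_r(0)=v_r(0)$) uniqueness gives $u\equiv v$ and $(f_1+f_2)(0)=2e^{u(0)}(1-e^{u(0)})>0$ because $u(0)<0$. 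If, say, $u(0)>v(0)$, then directly $f_2(0)=e^{v(0)}\bigl(1-2e^{v(0)}+e^{u(0)}\bigr)>e^{v(0)}\bigl(1-e^{v(0)}\bigr)>0$, while Lemma~\ref{1} applied at $r_0=0$ (the Remark following that lemma is exactly this observation) yields $f_1(0)\ge 0$; hence $(f_1+f_2)(0)>0$. Equivalently, $r_1=0$ already realizes case~(A) in the proof of Theorem~\ref{thm1}, and Lemma~\ref{lemma888} then forces the solution into alternative~(1), not~(2).
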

The most difficult part of Theorem \ref{thm1} is the part (1), where the a priori bound $\log\frac{1}{2}$ is established. The proof of Theorem \ref{thm1} is unusually long. One of difficulties is to exclude the possibility of intersection of infinite times between $u$ and $v$.  From equation \eqref{QAZ}, this exclusion is not obvious at all, in fact, it is one of the consequences of $L^1$-integrability of $f_i$, $i=1,2$. After establishing Theorem \ref{thm1}, $L^1$-integrability of $f_i$, $i=1,2$, is a nice application of the Pohozaev identity. Then Theorem \ref{cor1} follows immediately. \par

\begin{corollary}\label{cor2}
Let $(u(r),v(r))$ be an entire radial solution of \eqref{QAZ}. Then the followings hold.
\begin{itemize}
\item[(1)] If $(u(r),v(r))$ is a non-topological solution, then
$$u(r)=-\beta_1\log r +O(1)$$
$$v(r)=-\beta_2\log r+O(1)$$
at $\infty$ for some $\beta_1>2$ and $\beta_2>2$. Furthermore,
$$\beta_1^2+\beta_1\beta_2+\beta_2^2-4(N_1^2+N_1N_2+N_2^2)>6\Big( 2(N_1+N_2)+\beta_1+\beta_2   \Big).   $$
\item[(2)] $(u(r),v(r))$ is a mixed-type solution, then either \\
$$   u(r)\to\log\frac{1}{2}\mbox{    and    }v(r)=-\beta\log r+O(1)\mbox{   for  some   }\beta>2,              $$
or
$$   v(r)\to\log\frac{1}{2}\mbox{   and    }u(r)=-\beta\log r+O(1)\mbox{   for  some   }\beta>2,              $$
as $r\to\infty$.\par
\item[(3)] If $(u(r),v(r))$ is a topological solution, then
$$(u(r),v(r))\to (0,0)\mbox{  exponentially  as    }r\to\infty. $$
\end{itemize}
\end{corollary}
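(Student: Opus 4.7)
The plan is to combine Theorem~\ref{cor1} (the trichotomy into types (I), (II), (III)) with the $L^1$-integrability of $f_1(u,v), f_2(u,v)$ on $\mathbb{R}^2$ (which, as announced in the introduction, is obtained from Theorem~\ref{thm1} via a Pohozaev argument in the preceding sections) and standard asymptotic analysis of the radial ODE. The three cases are then treated by separate linearizations at the three possible asymptotic profiles, together with a single Pohozaev identity tailored to the Lagrangian structure of \eqref{QAZ}.

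For part (3), I would linearize \eqref{QAZ} at $(0,0)$. A direct expansion gives the linearization matrix
\begin{equation*}
M = \begin{pmatrix} 5 & -4 \\ -4 & 5 \end{pmatrix},
\end{equation*}
whose eigenvalues are $9$ and $1$, with eigenvectors $(1,-1)$ and $(1,1)$. The decoupled modes $w = u \pm v$ satisfy $\Delta w = \lambda w + \mathrm{h.o.t.}$ with $\lambda > 0$ near $w=0$, so a $K_0$-type sub-/supersolution barrier provides exponential decay of both modes, hence of $(u,v)$.

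For parts (1) and (2), I rewrite \eqref{QWE} as $(ru_r)_r = r(f_2 - 2f_1)$ and $(rv_r)_r = r(f_1 - 2f_2)$ on $(0,\infty)$, with $ru_r(0^+) = 2N_1$ and $rv_r(0^+) = 2N_2$. For (1), the $L^1$-bounds produce finite limits $\lim_{r\to\infty}ru_r = -\beta_1$ and $\lim_{r\to\infty}rv_r = -\beta_2$, where
\begin{equation*}
\beta_1 = 2I_1 - I_2 - 2N_1, \qquad \beta_2 = 2I_2 - I_1 - 2N_2, \qquad I_i := \int_0^\infty s\,f_i(s)\,ds;
\end{equation*}
integrating once more gives $u(r) = -\beta_1\log r + O(1)$ and $v(r) = -\beta_2\log r + O(1)$. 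For (2), Theorem~\ref{cor1} already provides the qualitative split $u \to \log(1/2), v \to -\infty$ (or vice versa), and the same integration applied to the divergent component yields its logarithmic asymptotic. The strict bound $\beta > 2$ in both cases is forced by $e^u \sim r^{-\beta_1}, e^v \sim r^{-\beta_2}$ together with $\int r\,f_i\,dr < \infty$: because $f_i = e^{u_i}(1 + o(1))$ at infinity, $L^1$-integrability of $f_i$ implies $\int r\,e^{u_i}\,dr < \infty$, and this integral converges only when $\beta_i > 2$.

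Finally, for the Pohozaev inequality in (1), I exploit the fact that the system is gradient with respect to the $K^{-1}$-inner product: setting
\begin{equation*}
G(u,v) := -e^u - e^v + e^{2u} + e^{2v} - e^{u+v},
\end{equation*}
a direct check gives $\partial_u G = -f_1$ and $\partial_v G = -f_2$, equivalently $K^{-1}\Delta\vec{u} = \nabla G$. Taking the $K^{-1}$-inner product of this identity with $(u_r, v_r)$ in radial coordinates gives $E' + 2E/r = G'$, where $E := \tfrac{1}{2}\sum_{a,b}(K^{-1})_{ab}\,u_a'\,u_b'$, and hence $(r^2 E)' = r^2 G'$. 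Integrating from $0$ to $\infty$ and extracting boundary contributions from $ru_r(0^+) = 2N_1, rv_r(0^+) = 2N_2$ and $ru_r(\infty) = -\beta_1, rv_r(\infty) = -\beta_2$ produces
\begin{equation*}
\tfrac{1}{3}\bigl(\beta_1^2 + \beta_1\beta_2 + \beta_2^2\bigr) - \tfrac{4}{3}\bigl(N_1^2 + N_1 N_2 + N_2^2\bigr) = -2\int_0^\infty r\,G(u,v)\,dr.
\end{equation*}
Using $I_1 + I_2 = \beta_1 + \beta_2 + 2N_1 + 2N_2$ from the previous step, the claimed strict inequality reduces to $\int_0^\infty r(e^{2u}+e^{2v})\,dr > \int_0^\infty r\,e^{u+v}\,dr$, which is immediate from AM-GM ($e^{2u} + e^{2v} \geq 2e^{u+v}$) combined with the strict positivity $\int r\,e^{u+v} > 0$. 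The most delicate input in the whole argument is the $L^1$-integrability of $f_i$ itself; once that ingredient is granted, the asymptotic integration and the Pohozaev bookkeeping above run straightforwardly.
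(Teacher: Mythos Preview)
Your overall strategy matches the paper's: combine the trichotomy of Theorem~\ref{cor1} with the $L^1$-integrability of $f_1,f_2$ (Lemma~\ref{lemma12}), integrate the radial equations to extract $\beta_i=-\lim_{r\to\infty} r\,u_i'(r)$, and use the Pohozaev identity for the inequality in~(1). Your linearization in~(3) and your Pohozaev bookkeeping via the potential $G$ (with $\partial_uG=-f_1$, $\partial_vG=-f_2$ and the AM--GM step $e^{2u}+e^{2v}>e^{u+v}$) are correct and in fact somewhat more explicit than the paper's presentation in Lemmas~\ref{lemma81}--\ref{lemma83}.

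There is, however, a genuine gap in your argument for the \emph{strict} inequalities $\beta_i>2$. You assert that ``$\int r\,e^{u_i}\,dr<\infty$ converges only when $\beta_i>2$'', but from $r u_r\to-\beta_1$ one obtains only $u(r)=-\beta_1\log r+o(\log r)$, not $O(1)$; at the borderline $\beta_1=2$ this is compatible with, e.g., $u(r)=-2\log r-2\log\log r$, for which $\int r\,e^{u}\,dr$ \emph{does} converge. So the cases $\beta_i=2$ must be excluded by a separate mechanism, and the paper supplies one (Lemma~\ref{lemma82}, Step~4). When $\beta_1=2<\beta_2$ one has $u>v$ and $u<\log\tfrac14$ for large $r$; then $f_2-2f_1=(g(v)-g(u))-f_1<0$ there, so $r u_r$ is strictly decreasing to its limit $-2$, hence $r u_r>-2$ throughout, forcing $e^u\ge C r^{-2}$ and $\int r\,e^{u}\,dr=\infty$, a contradiction. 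When $\beta_1=\beta_2=2$ one inserts these values into the \emph{finite-$r$} Pohozaev identity \eqref{qwe008} and lets $r\to\infty$ to contradict $e^u,e^v\in L^1$. For the mixed-type case (Lemma~\ref{lemma83}) the paper argues analogously with $r(u+v)_r$, which is strictly decreasing on $(R_0,\infty)$ since $f_1+f_2>0$ there: if $\beta_1+\beta_2\le 2$ then $r(u+v)_r>-2$, so $e^{u+v}\notin L^1(\mathbb{R}^2)$, contradicting Lemma~\ref{lemma12}; since $\beta_1=0$ this gives $\beta_2>2$ directly. Once strictness is secured, the power-law decay of $f_i$ upgrades the remainder from $o(\log r)$ to $O(1)$, and the rest of your argument goes through.
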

By this corollary,  when $(u,v)$ is a non-topological or mixed-type solution, we have the positivity of $f_i(r)$, $i=1,2$, for $r$ large enough.
\begin{corollary}\label{cor3}
Suppose $(u(r),v(r))$ be a non-topological or mixed-type solution of \eqref{QAZ}. Then \\
$f_i(r)>0$, $i=1,2$, for $r$ large enough.
\end{corollary}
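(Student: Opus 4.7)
\noindent\textbf{Proof proposal for Corollary \ref{cor3}.} My plan is to reduce the positivity of $f_1$ and $f_2$ to the sharp upper bound furnished by Theorem \ref{thm1}(1). First I would factor the nonlinearities as
\[
f_1(u,v) = e^{u}\bigl(1 - 2e^{u} + e^{v}\bigr), \qquad f_2(u,v) = e^{v}\bigl(1 - 2e^{v} + e^{u}\bigr),
\]
so that, since $e^{u}, e^{v} > 0$ always, the sign of each $f_i$ is governed entirely by the second (bracketed) factor.

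Next I would invoke Theorem \ref{thm1}(1), which yields an $R_0 \geq 0$ with $u(r), v(r) < \log\tfrac{1}{2}$ for every $r \geq R_0$; equivalently $e^{u(r)} < \tfrac{1}{2}$ and $e^{v(r)} < \tfrac{1}{2}$ on this range. For such $r$ this immediately gives
\[
1 - 2e^{u(r)} + e^{v(r)} \;>\; 1 - 2\cdot\tfrac{1}{2} + 0 \;=\; 0,
\]
using $2e^{u} < 1$ and $e^{v} > 0$; combined with $e^{u} > 0$ this delivers $f_1(r) > 0$ on $[R_0,\infty)$. The identical argument with $u$ and $v$ interchanged produces $f_2(r) > 0$ on $[R_0,\infty)$, which is exactly the statement of the corollary.

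The main obstacle has already been overcome inside Theorem \ref{thm1}, whose part (1) provides the decisive threshold $\log\tfrac{1}{2}$. I would point out in the write-up that this value is sharp for our purposes: by Corollary \ref{cor2}, in the mixed-type case exactly one of $u, v$ asymptotes to $\log\tfrac{1}{2}$, so $f_i(r) \to 0$ as $r \to \infty$ and strict positivity cannot be promoted to a uniform lower bound. Beyond applying Theorem \ref{thm1}(1), no further analysis is required — the corollary is a purely algebraic consequence of the factorisation above.
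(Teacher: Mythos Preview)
Your argument is correct and coincides with what the paper actually uses: the same factorisation $f_i=e^{w_i}(1-2e^{w_i}+e^{w_j})$ together with the bound $u,v<\log\tfrac12$ from Theorem~\ref{thm1}(1) is exactly how the paper establishes $f_1,f_2>0$ for large $r$ (see the opening step of Lemma~\ref{lemma12}). The only cosmetic difference is that the paper's one-line justification preceding the corollary points to Corollary~\ref{cor2}; your route through Theorem~\ref{thm1}(1) is the cleaner and logically complete version, since Corollary~\ref{cor2} alone does not give the strict inequality $u<\log\tfrac12$ needed in the mixed-type case.
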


By Theorem \ref{thm1} and Corollary \ref{cor2}, we obtain:
\begin{corollary}
Let $(u(r),v(r))$ be an entire radial solution of \eqref{QAZ}. Then $u$ and $v$ have intersection finite times.
\end{corollary}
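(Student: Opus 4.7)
The plan is to combine the trichotomy of Theorem~\ref{cor1} with the asymptotic descriptions in Theorem~\ref{thm1} and Corollary~\ref{cor2}, supplemented by the real-analyticity of the radial ODE system on $(0,\infty)$, so as to force $w(r):=u(r)-v(r)$ to have only finitely many zeros.

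\textbf{Local isolation.} Since \eqref{QWE} has real-analytic coefficients on $(0,\infty)$, the function $w$ is real-analytic on $(0,\infty)$ and, unless $w\equiv 0$, its zero set is discrete there. (The case $w\equiv 0$ forces $N_1=N_2$ and reduces \eqref{QAZ} to the Abelian equation \eqref{e014}, which is handled separately.) It therefore suffices to rule out accumulation of zeros at $r=0$ and at $r=\infty$. At the origin, the vortex expansion gives $w(r)=2(N_1-N_2)\log r+O(1)$ as $r\to 0^+$: this forbids accumulation because $|w|\to\infty$ if $N_1\neq N_2$, and if $N_1=N_2$ then after subtracting the common logarithmic term $w$ extends analytically across $0$, so its zeros are isolated there as well.

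\textbf{Separation at infinity via the classification.} By Theorem~\ref{cor1}, $(u,v)$ is of type (I), (II) or (III). For a topological solution, Theorem~\ref{thm1}(2)(b) directly provides an $R_0$ with $u\neq v$ on $(R_0,\infty)$. For a mixed-type solution, Corollary~\ref{cor2}(2) forces one of $u,v$ to tend to $\log\tfrac12$ and the other to $-\infty$, so $|w(r)|\to\infty$. For a non-topological solution, Corollary~\ref{cor2}(1) yields $w(r)=(\beta_2-\beta_1)\log r+O(1)$, and when $\beta_1\neq\beta_2$ this again forces $|w(r)|\to\infty$.

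\textbf{Main obstacle.} The only delicate subcase is a non-topological solution with $\beta_1=\beta_2>2$, where $w$ is merely bounded at infinity. The plan is to rewrite the equation for $w$ obtained by subtracting the two equations of \eqref{QWE} as
\begin{equation*}
(rw'(r))'=-6\,r\,e^{(u+v)/2}\,\sinh(w/2)\,\bigl(1-2(e^u+e^v)\bigr),
\end{equation*}
and to observe that $1-2(e^u+e^v)>0$ for all large $r$ (since $u,v\to-\infty$), while $r\,e^{(u+v)/2}=O(r^{\,1-\beta})$ is integrable at infinity because $\beta>2$. Integrating then first shows $rw'(r)\to 0$ and hence $w(r)\to w_\infty$ for some finite limit; if $w_\infty\neq 0$ the zeros of $w$ cannot accumulate at $\infty$, while if $w_\infty=0$, a bootstrap together with a Hille--Nehari / Sturm-type non-oscillation criterion (applicable because the associated potential decays faster than $r^{-2}$) gives non-oscillation of $w$ at infinity. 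Combined with the local isolation above, this yields only finitely many intersections.
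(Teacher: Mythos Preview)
Your argument is correct and, in fact, more complete than what the paper offers. The paper does not supply a proof of this corollary at all: it simply records it as a consequence of Theorem~\ref{thm1} and Corollary~\ref{cor2}. Those results immediately dispose of the topological case (Theorem~\ref{thm1}(2)(b) gives no intersections past some $R_0$), the mixed-type case (one component tends to $\log\tfrac12$, the other to $-\infty$), and the non-topological case with $\beta_1\neq\beta_2$ (the logarithmic rates separate $u$ from $v$). The case $\beta_1=\beta_2$, however, is not addressed by the cited statements, and you are right to single it out as the genuine obstacle.

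Your resolution of that case is sound. The identity
\[
(rw')' \;=\; -6\,r\,e^{(u+v)/2}\,\sinh(w/2)\,\bigl(1-2(e^u+e^v)\bigr)
\]
is correct (it is $3r(f_2-f_1)$ rewritten), and once $u,v<\log\tfrac14$ the factor $1-2(e^u+e^v)$ is positive. Writing $\sinh(w/2)=\tfrac{w}{2}\cdot\phi(w)$ with $\phi\ge 1$ shows that $w$ solves $(rw')'+Q(r)w=0$ for a \emph{fixed} continuous $Q(r)>0$ satisfying $Q(r)\le Cr^{1-\beta}$; after the change of variable $\tau=\log r$ this becomes $w_{\tau\tau}+P(\tau)w=0$ with $0<P(\tau)\le Ce^{-(\beta-2)\tau}$. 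Since $\tau\int_\tau^\infty P\to 0<\tfrac14$, Hille's non-oscillation criterion applies and yields finitely many zeros. (It is worth making explicit that although $P$ is built from the solution, once $(u,v)$ is fixed $P$ is a given coefficient and the criterion is applied to a genuine linear equation.) Your bootstrap showing $rw'\to 0$ and then $w\to w_\infty$ is also correct because $|rw'(r)|\le \int_r^\infty Q|w|\le C r^{2-\beta}$ gives $w'\in L^1((R,\infty))$.

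Two small remarks. First, the local-isolation step via real-analyticity is a clean way to handle the origin and any finite point; the paper never states this, but it is the right observation. Second, the degenerate situation $w\equiv 0$ should be phrased as ``$u\equiv v$ on $(0,\infty)$'' rather than tied to $N_1=N_2$ alone, since the paper already notes (start of Section~2) that $u(r_0)=v(r_0)$ together with $u_r(r_0)=v_r(r_0)$ forces $u\equiv v$; real-analyticity makes this immediate.
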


To appreciate the  result of Theorem \ref{cor1}, we should compare it with the following system of equations.
\begin{equation}\label{e004}
\left\{
\begin{split}
&\Delta u+e^v(1-e^u)=4\pi N_1\delta_0\\
&\Delta v+e^u(1-e^v)=4\pi N_2\delta_0
\end{split}
\right.
\hspace{2cm}\text{ in }\,\mathbb{R}^2,
\end{equation}
The system \eqref{e004} is related to Chern-Simons-Higgs model with  two Higgs particles. See \cite{LPY,CCL}. In spite of simple nature of the nonlinear terms in \eqref{e004}, Corollary \ref{cor2} does not hold for all solutions to \eqref{e004}. See \cite{CCL} for more details of statements.\par
Next, we want to consider the existence problem of mixed-type solutions. We denote $\Big(u(r;\alpha_1,\alpha_2), v(r;\alpha_1,\alpha_2)\Big)$ be  a radial solution of \eqref{QAZ} with
the initial value
\begin{equation}
\left\{
\begin{array}{l}
u(r ) = 2N_1 \log r +\alpha_1 + o(1)\\
v(r ) = 2N_2 \log r +\alpha_2 + o(1)
\end{array}
\right.
\text{   as  }  r  \to 0^+.
\end{equation}

We define the region of initial data of the non-topological solutions of \eqref{QAZ}.
\begin{equation}
\begin{split}
\Omega=\{(\alpha_1,\alpha_2)|&(u(r;\alpha_1,\alpha_2),v(r;\alpha_1,\alpha_2))\text{ is a  non-topological solution of }(\ref{QAZ}) \}.
\end{split}
\end{equation}

\begin{theorem}\label{thm5}
 $\Omega$ is an open set. Furthermore, if $\alpha=(\alpha_1,\alpha_2)\in\partial\Omega$, then $u(r;\alpha)$ is either a topological solution or a mixed-type solution.
\end{theorem}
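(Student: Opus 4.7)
The second assertion is immediate from the trichotomy in Theorem \ref{cor1} once the first (openness of $\Omega$) is established: any $\alpha \in \partial \Omega$ lies outside the open set $\Omega$, so the associated solution is not non-topological, hence by Theorem \ref{cor1} it is either topological or mixed-type. The content of the theorem therefore lies entirely in proving $\Omega$ is open; the plan is a continuous-dependence plus bootstrap argument driven by the strict inequality $\beta_1, \beta_2 > 2$ of Corollary \ref{cor2}(1).

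Fix $\alpha^0 \in \Omega$, write $(u^0, v^0)$ for the corresponding solution, and by Corollary \ref{cor2}(1) pick $\eta > 0$ with $2 + 2\eta < \min(\beta_1, \beta_2)$. For each small $\varepsilon_0 > 0$ one may then choose $R$ so large that
\begin{equation*}
R u^0_r(R),\ R v^0_r(R) < -(2+\eta), \quad u^0(R),\ v^0(R) < \log \tfrac{1}{2} - 2\eta, \quad R^2\bigl(e^{u^0(R)} + e^{v^0(R)}\bigr) < \varepsilon_0;
\end{equation*}
the last bound uses $R^2 e^{u^0(R)} \lesssim R^{2-\beta_1} \to 0$, which is precisely where the strict inequality $\beta_1 > 2$ enters. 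Substituting $\hat u = u - 2N_1 \log r$ and $\hat v = v - 2N_2 \log r$ reduces \eqref{QWE} to a regular ODE system on $[0,\infty)$ with initial data $(\hat u(0), \hat v(0)) = (\alpha_1, \alpha_2)$, so standard continuous dependence furnishes $\delta > 0$ such that whenever $|\alpha - \alpha^0| < \delta$, the three inequalities above at $r = R$ still hold with $\eta,\, 2\eta,\, \varepsilon_0$ replaced by $\eta/2,\, \eta,\, 2\varepsilon_0$.

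For such $\alpha$, write $(u, v) = (u(\cdot;\alpha), v(\cdot;\alpha))$ and define
\begin{equation*}
T = \sup\Bigl\{ r \geq R : s u_r(s),\ s v_r(s) \leq -(2+\tfrac{\eta}{4}) \text{ for every } s \in [R, r]\Bigr\}.
\end{equation*}
On $[R, T)$ one has $u(s) \leq u(R) - (2+\eta/4)\log(s/R)$ and the analogous bound for $v$, so
\begin{equation*}
\int_R^\infty s\bigl(e^{u(s)} + e^{v(s)}\bigr)\,ds \leq \frac{4R^2}{\eta}\bigl(e^{u(R)} + e^{v(R)}\bigr) \leq \frac{8\varepsilon_0}{\eta}.
\end{equation*}
Since $|f_1|, |f_2| \leq 4(e^u + e^v)$ whenever $u, v < 0$, integrating $(r u_r)_r = r(f_2 - 2f_1)$ gives $|s u_r(s) - R u_r(R)| \leq 64\,\varepsilon_0/\eta$ on $[R, T)$, and analogously for $v$. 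Taking $\varepsilon_0 < \eta^2/512$ closes the bootstrap strictly, yielding $s u_r(s),\ s v_r(s) < -(2 + 3\eta/8)$, so $T = \infty$. Consequently $u(s), v(s) \to -\infty$, the solution $(u, v)$ is non-topological, and $\alpha \in \Omega$, proving openness.

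The main obstacle is the bootstrap itself: one must prevent $r u_r, r v_r$ from drifting back above $-(2+\eta/4)$, which would allow the orbit to escape into the topological or mixed-type regime. The strict inequality $\beta_1, \beta_2 > 2$ from Corollary \ref{cor2}(1) is the indispensable ingredient, since it is what makes $R^2 e^{u^0(R)}$ arbitrarily small and hence controls the change $|s u_r(s) - R u_r(R)|$ by a quantity strictly smaller than the reservoir $\eta/4$ built into the definition of $T$; without this strict inequality the tail of the nonlinearity would not be dominated by the gap $\eta$ and the bootstrap would not close.
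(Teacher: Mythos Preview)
Your proof is correct and follows essentially the same approach as the paper's: a continuous-dependence argument at a large radius $R$, combined with a bootstrap showing that the bounds $ru_r, rv_r < -(2+\text{gap})$ persist for all $r\ge R$, driven by the strict inequality $\beta_1,\beta_2>2$ from Corollary~\ref{cor2}(1). The only cosmetic differences are that the paper argues by contradiction (assuming a first time $r_1$ where $r_1u_r(r_1;\alpha')=-(2+\delta/8)$) rather than via your supremum $T$, and it bounds $f_2-2f_1\le f_2\le \tfrac{3}{2}e^{v}$ using the positivity of $f_1$ below $\log\tfrac12$, whereas you use the cruder but equally serviceable $|f_i|\le 4(e^u+e^v)$.
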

In this paper, we do not address the question whether $\Omega$ is an non-empty. In a forthcoming paper, we will discuss this question completely.  However, for $N_1=N_2$, it is clear that by letting $u=v$, solutions of \eqref{e014} give $\Omega\not = \emptyset$. In fact, as a corollary of the existence result in \cite{ALW}, we have  $\Omega\not = \emptyset$, for all $N_1$ and $N_2$.\par
It is not difficult to prove that $\Omega\not=\mathbb{R}^2$, hence, $\partial\Omega\not=\emptyset$. It is generally expected that \eqref{QWE} should possess an unique topological solution. When $N_1=N_2=0$, we have the uniqueness of topological solutions.  We shall study this uniqueness problem in our further works. By this uniqueness, we should be able to prove the existence of the mixed-type solutions. In fact, we have the following conjecture.\\
\\
{\bf Conjecture} For any $\beta>2$, there is mixed-type solution of \eqref{QAZ} such that $u(r)\to\log\frac{1}{2}$, and $v(r)=-\beta\log r+O(1)$ as $r\to\infty$.\\
\par
The paper is organized as follows.  Sect. 2 is devoted to the proof of Theorem \ref{thm2}. In Sects. 3, 4 and 5, we show some  apriori estimates on the behavior of solutions. Theorem \ref{thm1} is proved in Sect. 6. The integrability of $f_1$ and $f_2$ will be discussed in Sect. 7 and Theorem \ref{cor1} follows. The asymptotic behaviors at infinity of solutions are shown in  Sect. 8.  Finally, in Sect. 9, we discuss the structure of the non-topological  solutions.

\section{PROOF OF THEOREM \ref{thm2}}

We introduce the function $g(x)=e^{x}-2e^{2x}$, which has the following property:\\
\begin{itemize}
\item[(1)]$g$\mbox{   is increasing on    }$(-\infty, \log\frac{1}{4})$, $g'(\log\frac{1}{4})=0$,  and $g$  is decreasing on $(\log\frac{1}{4},\infty).$ ( Hence, $g(x)\leq \frac{1}{8}$ on $(-\infty,\infty)$.)
\item[(2)] $g>0$ on $(-\infty,\log\frac{1}{2})$ and $g<0$ on $(\log\frac{1}{2},\infty)$.
\item[(3)] If $x<y$  and   $g(x)-g(y)=0$,  then   $x<\log\frac{1}{4}<y<\log\frac{1}{2}.$
\end{itemize}

The property of $g$ will be used in the lemmas of this paper.\par

If  $u(r_0)=v(r_0)$ and $u_r(r_0)=v_r(r_0)$, by the uniqueness,  $u(r)=v(r)$ for $r>0$. The system of  equations \eqref{QAZ} can be reduce to the single equation $$u_{rr}+\frac{1}{r}u_r= e^{2u}-e^{u}\mbox{  on  }(0,\infty).$$
Then it is known that $\lim_{r\to\infty}(u(r),v(r))=(0,0)$ or $\lim_{r\to\infty}(u(r),v(r))=(-\infty,-\infty)$ (see \cite{Yang2}).
Hence, if $u_r(r_0)=v_r(r_0)$, we can assume $u(r_0)\not = v(r_0)$; if $u(r_0)=v(r_0)$, we can assume  $u_r(r_0)\not = v_r(r_0)$.\par
We show some sufficient conditions that $(u,v)$ cannot be entire  solutions.
\begin{lemma}(Finite Time Blow-up Condition)\label{blowup}
\begin{itemize}
\item[(1)]  Suppose $u(r_0)=v(r_0)>0$, $u_r(r_0)>0$ and $(u-v)_r(r_0)>0$.  Then $u(r)$  blows up in finite time.
\item[(2)]  Suppose $u(r_0)>v(r_0)$, $u(r_0)>0$ and $u_r(r_0)>0$.  Then $u(r)$ or $v(r)$  blows up in finite time.
\end{itemize}
\end{lemma}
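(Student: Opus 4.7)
My plan is to exploit the super-quadratic self-focusing term $+4e^{2u}$ on the right-hand side of the equation for $u$, which, under the hypothesis $u \geq v$, yields an Emden--Fowler-type inequality that forces finite-time blow-up. The key algebraic observation is that whenever $u \geq v$,
$$4e^{2u} - 2e^{2v} - e^{u+v} \;\geq\; (4-2-1)\,e^{2u} \;=\; e^{2u},$$
so the radial equation gives the lower bound
$$u_{rr} + \tfrac{1}{r}u_r \;\geq\; e^{2u} - 2e^u + e^v \;\geq\; e^u(e^u - 2),$$
which dominates $\tfrac{1}{2}e^{2u}$ once $u \geq \log 4$. Multiplying by $u_r > 0$ and integrating yields $u_r^2 \gtrsim e^{2u}$, so $(e^{-u})_r \leq -c < 0$ for $u$ large, forcing $e^{-u}$ to vanish in finite radius.

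\textbf{Part (2).} I would set $I = [r_0, R^{\ast})$ as the maximal interval on which $u > v$, $u > 0$, and $u_r > 0$. On $I$, the estimate above gives $(ru_r)_r \geq r(e^{2u} - 2e^u + e^v)$, ensuring $u_r$ stays positive and $u$ increases monotonically. Meanwhile, the identity
$$\Delta(u-v) \;=\; 3(e^u - e^v)\bigl(2(e^u + e^v) - 1\bigr)$$
is strictly positive on $I$ (since $u > 0$ gives $2(e^u+e^v) > 2$), so $r(u-v)_r$ is strictly increasing on $I$. A first-zero / maximum-principle argument (using the strict monotonicity of $r(u-v)_r$ together with the initial positivity of $u-v$) excludes $u - v$ decreasing back to $0$ on $I$. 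Thus $I$ terminates either at a finite $R^{\ast}$ with blow-up of $u$ or $v$, or extends to $+\infty$; in the latter case, monotone convergence of $u$ to some $u^{\ast} > 0$ would force $u_r \to 0$ and hence $\Delta u \to 0$, contradicting the positive lower bound $\Delta u \geq e^{u^{\ast}}(e^{u^{\ast}} - 2) + e^{v^{\ast}}$ (one first integrates $(ru_r)_r$ to show $u$ must cross $\log 2$, then applies the full Emden--Fowler inequality to push $u \to +\infty$ in finite $r$).

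\textbf{Part (1).} Since $(u-v)(r_0) = 0$ and $(u-v)_r(r_0) > 0$, continuity gives $u(r_0+\varepsilon) > v(r_0+\varepsilon)$, $u(r_0+\varepsilon) > 0$, and $u_r(r_0+\varepsilon) > 0$ for all sufficiently small $\varepsilon > 0$. Applying part (2) at $r_1 = r_0 + \varepsilon$ yields blow-up of $u$ or $v$ in finite time. To upgrade this conclusion to ``$u$ blows up'', I would invoke that the part-(2) analysis identifies $u$ as the component driven by the $e^{2u}$ self-focusing term: the bound $u_{rr} \geq \tfrac{1}{2}e^{2u} - u_r/r$ uses only $u \geq v$ (which is preserved from $r_1$ onward), so $u \to +\infty$ at the blow-up time regardless of whether $v \to -\infty$ simultaneously.

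The \textbf{main obstacle} I anticipate is the propagation of the invariant $u > v$ in part (2): since the sign of $(u-v)_r(r_0)$ is not prescribed, one cannot conclude monotonicity of $u-v$ directly and must argue via the strict increase of $r(u-v)_r$ combined with the strict initial positivity of $u-v$ at $r_0$. A secondary subtlety is the bridge between the weak regime $u \in (0, \log 2)$, where $\Delta u \geq e^u(e^u - 2)$ can be negative, and the strong regime $u \geq \log 4$, where the Emden--Fowler mechanism takes over; this transition requires careful integration of $(ru_r)_r$ and a positive contribution from the $e^v$ term in the lower bound.
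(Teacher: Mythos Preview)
Your blow-up mechanism (multiply by $u_r$, integrate, get $u_r\gtrsim e^{u}$) is essentially equivalent to the paper's change of variables $t=\log r$, $\tilde u=u+2t$, yielding $\tilde u''\ge\delta e^{\tilde u}$. That endgame is fine. The problems lie in the invariant propagation that must come first.

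\textbf{The claim that $u>v$ persists in Part~(2) is not justified.} You write that strict monotonicity of $r(u-v)_r$ together with $u(r_0)>v(r_0)$ excludes $u-v$ returning to $0$. It does not: without a sign hypothesis on $(u-v)_r(r_0)$, the quantity $r(u-v)_r$ may start very negative and remain negative long enough for $u-v$ to vanish, even though it is increasing. The paper does \emph{not} try to rule this out. Instead it splits: if $u>v$ on the whole maximal interval, run the blow-up argument for $u$; if $u$ meets $v$ at some first $r_1$, then (using $u_r>0$ up to $r_1$) one has $v(r_1)=u(r_1)>0$, $v_r(r_1)>u_r(r_1)>0$, and $(v-u)_r(r_1)>0$, so Part~(1) applies with the roles of $u,v$ exchanged and it is $v$ that blows up. This is precisely why the conclusion of Part~(2) reads ``$u$ \emph{or} $v$ blows up'' and why your reduction of (1) to (2) is in the wrong direction.

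\textbf{The inequality $\Delta u\ge e^{2u}-2e^u+e^v$ is too weak to propagate $u_r>0$.} At a first zero $r_2$ of $u_r$ you need the right-hand side to be strictly positive, i.e.\ $e^{v(r_2)}>e^{u(r_2)}(2-e^{u(r_2)})$. For $u(r_2)\in(0,\log 2)$ the threshold $e^{u}(2-e^{u})$ lies in $(0,1)$, and nothing prevents $v(r_2)$ from being very negative in Part~(2). The paper avoids this by using the exact decomposition $\Delta u=(f_2-f_1)+(-f_1)$ with $f_i=e^{u_i}-2e^{2u_i}+e^{u+v}$: when $u>0$ and $u\ge v$, one checks $f_2-f_1=g(v)-g(u)\ge0$ (via the shape of $g(x)=e^x-2e^{2x}$) and $-f_1=e^u\bigl[(e^u-1)+(e^u-e^v)\bigr]>0$, giving $u_{rr}(r_2)=\Delta u(r_2)>0$, a clean contradiction with no regime split. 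Your ``bridge'' between $u\in(0,\log 2)$ and $u\ge\log 4$, relying on the $e^v$ contribution, cannot be completed without an a~priori lower bound on $v$ that you do not have.
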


\begin{proof}
(1)  Let  $[0,T)$ be the  maximal interval of existence for $(u,v)$.\\
 {\it Step 1. } {\it Let $I=(r_0,r_1)$ be the interval such that $u(r)>v(r)$ on $I$. We claim that $u_r(r)>0$ on $[r_0,r_1]$}\par
Suppose it is not true,  so there exits  $r_2\in[r_0,r_1]$  such that
$$u_r(r_2)=0\mbox{    and     }  u_r(r)>0\mbox{ on    }[r_0,r_2].$$
Hence,
\begin{equation}\label{eq015}
0\geq u_{rr}(r_2)=(f_2-2f_1)(r_2)
\end{equation}
From the assumptions $u(r_0)=v(r_0)>0$,  $u_r(r)>0$ on $[r_0,r_2)$ and $u(r)>v(r)$ on $(r_0,r_1)$,  we have
$$u(r_2)>0\mbox{ and } u(r_2)\geq v(r_2).  $$
It follows that
\begin{equation}
(f_2-f_1)(r_2)=g(v(r_2))-g(u(r_2))\geq 0
\end{equation}
and
\begin{equation}
\begin{split}
-f_1(r_2)&=-e^{u(r_2)}+2e^{2u(r_2)}-e^{u(r_2)+v(r_2)}\\
&=e^{u(r_2)}(e^{u(r_2)}-1)+e^{u(r_2)}(e^{u(r_2)}-e^{v(r_2)})>0,
\end{split}
\end{equation}
which contradict to \eqref{eq015}. Hence, $u_r(r)>0$ on $[r_0,r_1]$.\\
{\it Step 2. } {\it  We show that $u(r)>v(r)$ on $(r_0,T)$.}\par
Suppose  for the sake of contradiction that there exits $r_3\in(r_0,T)$ such that
$$ u(r_3)=v(r_3)\mbox{   and    } u(r)>v(r)\mbox{   on  }(r_0,r_3).$$
Since $u_r(r)>0$ on  $(r_0,r_3)$ and $u(r_0)>0$, we have
$$u(r)>0\mbox{ on }[r_0,r_3]\mbox{    and   } u(r)>v(r)\mbox{ on    }(r_0,r_3).     $$
Thus,
$$(f_2-f_1)(r)=g(v(r))-g(u(r))>0 \mbox{   for  }r\in(r_0,r_3).$$
By this  and  $(u-v)_r(r_0)>0$, we get
$$r(u-v)_r(r)=r_0(u-v)_r(r)+3\int_{r_0}^r s  (f_2-f_1)(s)ds>0 \mbox{   for   }r\in[r_0,r_3].$$
Obviously, it is a contradiction.\\
{\it Step 3. }{\it We show that $T<\infty$.}\par
We introduce the change of variable
$$t=\ln r,\,\,t_0=\ln r_0 \mbox{   and   }T_0=\ln T$$
The first equation of \eqref{QWE} becomes
\begin{equation}
u''=e^{2t}(f_2-2f_1),\,\,\,    -\infty<t<T_0
\end{equation}
where   $'$ is denoted the differentiation with respect to $t$. Set  $ \widetilde{u}=u+2t$.
On $(t_0,T)$, by applying  $u(r)>v(r)$ on $(r_0,T)$ and $u(r)>0$  on $(r_0,T)$, we have
\begin{equation}\label{eq031}
 \widetilde{u}''> e^{ \widetilde{u}}(e^{ u}-1)\geq \delta e^{ \widetilde{u}},
\end{equation}
for  $\delta\in(0,    e^{u(r_0)}-1    )$. We further restrict $$\delta\in(0, \min\{ e^{u(r_0)}-1 , \frac{1}{2} \widetilde{u}'^2(t_0) e^{- \widetilde{u}(t_0)}      \}     ).$$
Multiplying \eqref{eq031} by $\widetilde{u}'$ and integrating on $(t_0,t)$, we have
$$
\frac{1}{2}(\widetilde{u}'^2(t)-\widetilde{u}'^2(t_0))\geq \delta (e^{ \widetilde{u}(t)}-e^{ \widetilde{u}(t_0)}).
$$
It follows that
$$
\widetilde{u}'(t)\geq \sqrt{2\delta}\,(\frac{1}{8}\widetilde{u}^2(t)+1)
$$
because $\widetilde{u}'(t)>0$ on $(t_0,T)$ and $\widetilde{u}(t)$ on $(t_0,T)$.
It  leads $\widetilde{u}$ to blow up in finite time and thus  $T$ is  finite.\\
(2) Since we do not assume $(u-v)_r(r_0)>0$, $u(r)$ may intersect $v(r)$ on $(r_0,T)$. Thus we consider the following possible cases:
\begin{itemize}
\item[(1)] $u(r)>v(r)$ on $(r_0,T)$.
\item[(2)] $u(r)$ intersects $v(r)$ on $(r_0,T)$.
\end{itemize}
{\it Step 1.  }{\it For the first case, we show $T<\infty$.}\par
Repeating the arguments in  the step 1 of this lemma (1), we find
$$u_r(r)>0\mbox{  on    } (r_0,T),   $$
because we suppose $u(r)>v(r)$ on $(r_0,T)$.
As in  the step 3 of this lemma (1), we conclude that $T<\infty$.\\
{\it Step 2. }{\it For the second case, let $r_1$ be the first intersection point of $u$ and $v$ on $(r_0, T)$.  We show that $T<\infty$. }\par
As in the step 1  of this lemma (1) again, we know
that
$$u_r(r)>0\mbox{  on    } [r_0,r_1].$$
Thus, we have
$$v(r_1)=u(r_1)>0,\,\, v_r(r_1)>0\mbox{    and    } (v-u)_r(r_1)>0$$
and the second case follows from this lemma (1).
\end{proof}
\par
{\bf Proof of Theorem \ref{thm2} }\\
{\it Step 1. }  For $r>0$, we denote $$F_U(r)=\max\{u(r),v(r)\}$$ and
$$
G_U(r)=
\left\{
\begin{array}{lll}
\max\{u_r(r),v_r(r)\}&\mbox{  if  }&u(r)=v(r)\\
\frac{d}{dr}F_U(r)&\mbox{  if  }&u(r)\not=v(r)
\end{array}
\right.
$$
{\it Step 2.}   Suppose $F_U(r)$ attains its positive maximum value at some point $r_M\in[0,\infty)$. By symmetry, we may assume that $u(r_M)\geq v(r_M)$.
Thus we obtain
\begin{equation}\label{qwe005}
0\geq f_2(r_M)-2f_1(r_M)=g(v(r_M))-g(u(r_M))-f_1(r_M).
\end{equation}
Note that
$$g(v(r_M))-g(u(r_M))\geq 0\mbox{   if   }u(r_M)>0\mbox{   and   }u(r_M)\geq v(r_M),$$
and
$-f_1(r_M)=e^{u(r_M)}[(e^{u(r_M)}-1)+(e^{u(r_M)}-e^{v(r_M)}    ]>0$   if    $u(r_M)>0$  and $u(r_M)\geq v(r_M).$
It is a contradiction to \eqref{qwe005}. We conclude that either $F_U(r)$ attains non-positive maximum values on $[0,\infty)$
or $F_U(r)$ never attains a maximum on on $[0,\infty)$.\\
{\it Step 3. } {\it Suppose $F_U(r)$ never attains a maximum on on $[0,\infty)$, we show that $F_U(r)\leq 0$.}\par
Suppose otherwise that $F_U>0$ at some point $r_0\in(0,\infty)$. Since we assume that $(u,v)$ is an entire solution, by Lemma \ref{blowup},   $G_U(r_0)\leq 0$.  In view of the boundary conditions at $0$ and the step 2, it is
a contradiction that $F_U$ attains positive maximum value on $[0,r_0]$. Hence, $F_U(r)\leq 0$ on $(0,\infty)$ if it never attains a maximum on on $[0,\infty)$.\\
{\it Step 4.} It is easy to see that $F_U(r_0)=0$ if only if   $u(r_0)=v(r_0)=0$. By this and step 3,
we have
$$u_r(r_0)=v_r(r_0),$$
and thus $u(r)=v(r)=0$ for $r\in(0,\infty)$. We  conclude that
$$u(r),v(r)<0\mbox{    on   }(0,\infty),$$
unless $u=v\equiv 0$.

\section{APRIORI ESTIMATE  }
In this section, we present some  apriori estimates of  the behavior of $(u(r),v(r))$. The most important result is to prove $\max(u(r),v(r))<\log\frac{1}{2}$ under some condition. See Lemma \ref{co1} and Lemma \ref{ll}. This estimate is a crucial step toward proving the positivity of $f_i$, $i=1,2$.       We first have the following simple observation.

\begin{lemma}\label{remark1}
Let $(u,v)$ be the solution of \eqref{QAZ}. Suppose that $\lim_{r\to\infty}e^{u(r)}$ and   $\lim_{r\to\infty}e^{v(r)}$ exists. Then $\lim_{r\to\infty}(e^{u(r)},e^{v(r)})$ must be one of the following:
$$\begin{array}{ccc}
(a)\,(1,1).& \hspace{3cm}  &
(b)\,(\frac{1}{2},0).\\
(c)\,(0,\frac{1}{2}).&  \hspace{3cm}  &
(d)\,  (0,0).
\end{array}
$$
\end{lemma}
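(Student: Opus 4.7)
The plan is to identify the only candidate limits $(a, b) := \lim_{r \to \infty}(e^{u(r)}, e^{v(r)})$ by combining a radial ODE argument with an algebraic enumeration. By Theorem~\ref{thm2} one already has $a, b \in [0, 1]$.

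The key analytic step is the following claim: if $a > 0$, then $\lim_{r\to\infty}(f_2 - 2f_1)(r) = 0$, and if $b > 0$, then $\lim_{r\to\infty}(f_1 - 2f_2)(r) = 0$. To prove the first, I would rewrite the first equation of \eqref{QWE} away from the origin as $(r u_r)_r = r\,(f_2 - 2f_1)(r)$ and argue by contradiction. Since $a, b$ exist, the limit $c := \lim_{r\to\infty}(f_2 - 2f_1)(r)$ exists by continuity of the nonlinearity; if $c \neq 0$, then for $r$ sufficiently large one has $|(r u_r)_r| \geq (|c|/2)\,r$ with a fixed sign, so integrating from some $R_0$ yields $r u_r(r) \sim (c/4)\,r^2$ and in turn $u(r) \to \pm\infty$, contradicting the fact that $u$ has the finite limit $\log a$. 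The same argument applied to the $v$-equation handles the claim for $b > 0$.

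Given the claim, the classification reduces to enumerating the zeros in $[0, 1]^2$ of the polynomial system obtained by setting $a = e^u$ and $b = e^v$ in $P_1(a,b) := -2a + b + 4a^2 - 2b^2 - ab$ (the value of $f_2 - 2f_1$) and $P_2(a,b) := a - 2b - 2a^2 + 4b^2 - ab$ (the value of $f_1 - 2f_2$), subject to whichever of these are forced to vanish. If $a > 0$ and $b > 0$, both must vanish; the difference $P_1 - P_2$ factors as $3(a-b)(2(a+b) - 1) = 0$, so either $a = b$ or $a + b = 1/2$. The branch $a = b$ substituted into $P_1 = 0$ gives $a(a - 1) = 0$, forcing $(a, b) = (1, 1)$. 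The branch $a + b = 1/2$ substituted into $P_1 = 0$ gives $3a(a - 1/2) = 0$, forcing $(a, b) \in \{(0, 1/2), (1/2, 0)\}$, inconsistent with $a, b > 0$. If $a > 0$ and $b = 0$, the scalar relation $P_1(a, 0) = -2a + 4a^2 = 0$ forces $a = 1/2$; by symmetry, $a = 0$ and $b > 0$ gives $b = 1/2$. The remaining case is $(a, b) = (0, 0)$, and these four pairs are precisely alternatives (a)--(d).

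The main obstacle, though modest, is the ODE step: the bare existence of limits for $e^u$ and $e^v$ does not automatically provide pointwise control over $u_r$ or $v_r$, so one cannot simply take limits in the equations. The integration argument is what converts a hypothetical nonzero limit of the radial Laplacian into quadratic growth of $u$, and this is the only device needed to extract the algebraic constraints from the convergence hypothesis; the remainder is elementary polynomial algebra.
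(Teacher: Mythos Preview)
Your proof is correct. The paper does not supply a proof for this lemma at all; it is presented as a ``simple observation'' and left to the reader, so there is no argument in the paper to compare against. Your approach---showing that a positive limit for $e^u$ (resp.\ $e^v$) forces the right-hand side of the $u$-equation (resp.\ $v$-equation) to vanish in the limit via the quadratic-growth integration argument, and then solving the resulting polynomial system in $(a,b)$---is exactly the natural way to fill this in, and both the ODE step and the algebraic enumeration are carried out accurately.
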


If the derivative of one of $u(r)$ and $v(r)$ must be negative   on an interval $I$, then  $u(r)$ and $v(r)$ cannot increase simultaneously on  $I$ and  we say that $u(r)$ and $v(r)$ satisfy {\it non-simultaneous increasing condition} (for brevity, { nsi}-condition) on $I$. If the derivative of one of $u(r)$ and $v(r)$ must be positive   on an interval $I$, then  $u(r)$ and $v(r)$ cannot decrease simultaneously on  $I$ and  we say  that $u(r)$ and $v(r)$ satisfy {\it non-simultaneous decreasing condition} (for brevity, { nsd}-condition) on $I$.

\begin{definition}
\begin{enumerate}
\item[(1).] We say that a function $f(r)$ has an $S_{[a,b]}$-profile if
$$f_r(a)=f_r(b)=0\,\,\text{  and  }f_r(r)\geq 0\text{ on }(a,b),$$
 and a function $f(r)$ has a reversive $S_{[a,b]}$-profile if
$$f_r(a)=f_r(b)=0\,\,\text{  and  }f_r(r)\leq 0\text{ on }(a,b).$$
\item[(2).] $f(r)$ has an $S$-profile on $[c,d]$ if $f(r)$ has  an $S_{[a,b]}$-profile, where  $[a,b]\subset[c,d]$. Similarly, $f(r)$ has a reversive  $S$-profile on $[c,d]$ if $f(r)$ has  a reversive  $S_{[a,b]}$-profile, where   $[a,b]\subset[c,d]$.
\end{enumerate}
\end{definition}

The following lemmas will be used to prove Theorem \ref{thm1}.

\begin{lemma}\label{1}
Assume that $u(r_0)>v(r_0)$ and $u_r(r_0)\geq 0 \geq v_r(r_0)$. Then $f_1(r_0)\geq 0$.
\end{lemma}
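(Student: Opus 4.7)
The proof will proceed by contradiction. Suppose $f_1(r_0) < 0$. Writing $f_1 = e^{u}(1 - 2e^{u} + e^{v})$ with $e^{v} > 0$, this forces $2 e^{u(r_0)} > 1$, i.e.\ $u(r_0) > \log\tfrac{1}{2}$; combined with Theorem \ref{thm2} this gives $\log\tfrac{1}{2} < u(r_0) < 0$. Moreover, $f_1 < 0$ yields $e^{v(r_0)} < 2 e^{u(r_0)} - 1 < 1$. A short algebraic check substituting the lower bound $e^{u} > (1 + e^{v})/2$ into $f_2 = e^{v}(1 - 2 e^{v} + e^{u})$ produces $1 - 2 e^{v} + e^{u} > \tfrac{3}{2}(1 - e^{v}) > 0$, so $f_2(r_0) > 0$. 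The same argument shows $f_2 > 0$ holds on the entire region $\{f_1 < 0\} \cap \{u, v < 0\}$, not just at $r_0$.

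Next I trace $f_1$ backward from $r_0$ toward $0$. From the expansions $u = 2N_1 \log r + O(1)$ and $v = 2 N_2 \log r + O(1)$, the term $e^{u}$ dominates in $f_1(r)$ for small $r$, so $f_1 > 0$ in a right neighborhood of $0$ (the degenerate cases $N_1 = 0$ or $N_2 = 0$ are handled by inspecting initial values together with Theorem \ref{thm2}). Set $r_1 := \sup\{r \in (0, r_0) : f_1(r) = 0\}$; then $f_1(r_1) = 0$, $f_1 < 0$ on $(r_1, r_0]$, and by the algebraic statement above $f_2 > 0$ on the same interval. The equations $(r u_r)_r = r(f_2 - 2 f_1)$ and $(r v_r)_r = r(f_1 - 2 f_2)$ then give $(r u_r)_r > 0$ and $(r v_r)_r < 0$ throughout $[r_1, r_0]$, so $r u_r$ is strictly increasing and $r v_r$ strictly decreasing there.

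To extract the contradiction, the key identity is obtained from $-\Delta(2u+v) = 3 f_1$: integrating over $[r_1, r_0]$ yields
\[
r_0\bigl(2 u_r(r_0) + v_r(r_0)\bigr) - r_1\bigl(2 u_r(r_1) + v_r(r_1)\bigr) = -3 \int_{r_1}^{r_0} s\, f_1(s)\, ds > 0.
\]
Alongside this I would exploit the auxiliary function $P(r) := 2 e^{u(r)} - 1 - e^{v(r)}$ (so $f_1 = -e^{u} P$), which satisfies $P(r_1) = 0$, $P_r(r_1) \geq 0$ at a crossing from non-positive to positive values, and $P_r(r_0) = 2 e^{u(r_0)} u_r(r_0) - e^{v(r_0)} v_r(r_0) \geq 0$ by the sign hypotheses $u_r(r_0) \geq 0 \geq v_r(r_0)$.

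The main obstacle is organizing the pieces into a clean contradiction. The sign constraints at $r_0$, the structural relations $P(r_1) = 0$ and $P_r(r_1) \geq 0$, and the monotonicities $r_1 u_r(r_1) < r_0 u_r(r_0)$ and $r_1 v_r(r_1) > r_0 v_r(r_0)$ must be combined to rule out the strict positivity of the displayed integral. I expect the closing step to be a careful case analysis of the signs of $u_r(r_1)$ and $v_r(r_1)$ constrained jointly by the monotonicity and by $P_r(r_1) \geq 0$; the delicate point is that the hypothesis at the single point $r_0$ is weak, so the identification of the right combination of $u$ and $v$ derivatives (beyond the natural choice $2u + v$) that delivers the contradiction is the principal technical challenge.
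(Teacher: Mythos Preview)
Your proposal is incomplete, and you say so yourself: after setting up the backward interval $(r_1,r_0]$, the integral identity for $2u+v$, and the constraint $P_r(r_1)\geq 0$, you acknowledge that ``the identification of the right combination \dots\ is the principal technical challenge.'' That challenge is real and is not resolved by the pieces you have assembled. The monotonicities $r_1u_r(r_1)<r_0u_r(r_0)$ and $r_1v_r(r_1)>r_0v_r(r_0)$ do not pin down the signs of $u_r(r_1),v_r(r_1)$ when $u_r(r_0)>0$ or $v_r(r_0)<0$, and $P_r(r_1)\geq 0$ is too weak to close the loop. There is also a gap in your treatment of the degenerate case $N_1=0$: Theorem~\ref{thm2} only gives $u(0)<0$, not $u(0)<\log\frac12$, so $f_1$ need not be positive near the origin and $r_1$ may fail to exist.

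The paper's argument goes in the opposite direction---forward from $r_0$ to $\infty$---and this is not a cosmetic difference. The key computation is that at any first forward zero $r_*$ of $f_1$ (where $f_1$ crosses from negative to nonnegative) one has, using $f_1(r_*)=0$,
\[
\tfrac{d}{dr}f_1(r_*)= -2e^{2u(r_*)}u_r(r_*)+e^{(u+v)(r_*)}v_r(r_*),
\]
which is strictly negative as long as $u_r>0>v_r$. This, together with an analogous second-derivative check at potential critical points of $u$ and $v$, propagates the regime $u_r>0>v_r$ to all of $(r_0,\infty)$. One then has monotone limits with $\lim e^{u}>\tfrac12$ and $\lim e^{v}<1$, which is impossible for an entire solution by the elementary limit classification (Lemma~\ref{remark1}). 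The lemma is therefore a genuinely global statement about entire solutions, and the contradiction lives at $r=\infty$; your backward scheme, which tries to extract a contradiction on $[0,r_0]$ using only Theorem~\ref{thm2} as global input, is missing this mechanism.
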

\begin{proof}
Note that $f_1(r_0)=e^{u(r_0)}-2e^{2u(r_0)}+e^{(u+v)(r_0)}$ is positive whenever $v(r_0)<u(r_0)\leq\log\frac{1}{2}$. We only need to consider the case that $u(r_0)>\log\frac{1}{2}$.
Suppose for the sake of contradiction that  $f_1(r_0)<0$.\\
{\it Step 1.} {\it We   show that $u_r(r)>0$ and $v_r(r)<0$ on some interval  $(r_0,r_1)  \subseteq(r_0,\infty)$.}\par
Since we suppose $u(r_0)>v(r_0)$, it is obvious that
\begin{equation}
f_2(r_0)=e^{v(r_0)}-2e^{2v(r_0)}+e^{u(r_0)+v(r_0)}>0.
\end{equation}
By this and $f_1(r_0)<0$, we have
$$(f_2-2f_1)(r_0)>0\mbox{  and  }(f_1-2f_2)(r_0)<0.$$
It follows that
\begin{equation}\label{eq001}
(f_2-2f_1)(r)>0\mbox{  and  }(f_1-2f_2)(r)<0\mbox{   on  some interval }[r_0,r_1).
\end{equation}
Hence,
\begin{equation}\label{eq013}
ru_r(r)=r_0u_r(r_0)+\int_{r_0}^r s(f_2-2f_1)ds>0\mbox{  for }r\in(r_0, r_1]
\end{equation}
and
\begin{equation}\label{eq014}
rv_r(r)=r_0v_r(r_0)+\int_{r_0}^r s(f_1-2f_2)ds<0\mbox{  for }r\in(r_0, r_1].
\end{equation}
\\
{\it Step 2.} {\it  Let $r_2=\sup\{s\,|\,s>r_0 \mbox{   and   } u_r>0\mbox{   on  } (r_0,s)  \}.$  We claim that $v_r(r)<0$ for $r\in(r_0,r_2)$.}\par
Suppose  for the sake of contradiction that there exists $r_3\in(r_0,r_2)$, such that
 $v_r(r_3)=0$  and $v_r(r)<0$ for $r\in(r_0,r_3)$. It follows that
\begin{equation}\label{eq071}
0\leq v_{rr}(r_3)=(f_1-2f_2)(r_3),
\end{equation}
which implies
\begin{equation}\label{eq003}
f_1(r_3)\geq 2f_2(r_3)>0.
\end{equation}
Inequality  \eqref{eq003} follows from  $u>v$  on $(r_0,r_3]$. Since we suppose $f_1(r_0)<0$ and by \eqref{eq003}, there exists $r_4\in (r_0,r_3)$ such that
$$
f_1(r_4)=0\mbox{   and   }f_1(r)>0\mbox{  for } r\in(r_4,r_3).
$$
It follows that $\frac{d}{dr}f_1(r_4)\geq 0$. On the other hand,
the calculation of  $\frac{d}{dr}f_1(r_4)$ gives
\begin{equation}\label{ee1}
\begin{split}
\frac{d}{dr}f_1(r_4)&=(e^u-4e^{2u}+e^{u+v})|_{r_4}u_r(r_4)+e^{u+v}|_{r_4}v_r(r_4)\\
&=-2e^{2u}|_{r_4}u_r(r_4)+e^{u+v}|_{r_4}v_r(r_4)
\end{split}
\end{equation}
where  $f_1(r_4)=0$ is used. Since $u_r(r_4)>0$ and $v_r(r_4)<0$, we have that the right hand side of \eqref{ee1} is negative which is a contradiction.
So, $v_r(r)<0$ for $r\in(r_0, r_2)$ is proved.\\
\\
{\it Step 3.} {\it We now show that $r_2=\infty$.}\par If it is not true, then
$$
u_r(r_2)=0\mbox{  and  }u_r(r)>0\mbox{  for  }r\in(r_0,r_2).
$$
It follows that
\begin{equation}
0\geq u_{rr}(r_2)=(f_2-2f_1)(r_2).
\end{equation}
By the step 2, we have  $u(r_2)>v(r_2)$ and thus
$$2f_1(r_2)\geq f_2(r_2)>0.$$
By this and $f_1(r_0)< 0$, there exits $r_5\in(r_0,r_2)$, such that
$$
f_1(r_5)=0\mbox{   and   }f_1(r)>0\mbox{  for } r\in(r_5,r_2),
$$
which implies $\frac{d}{dr}f_1(r_5)\geq 0$.
By computing $\frac{d}{dr}f_1(r_5)$ as in \eqref{ee1}, we have
$$\frac{d}{dr}f_1(r_5)=-2e^{2u}\big|_{r_5}u_r(r_5)+ e^{u+v}\big|_{r_5}v_r(r_5)<0,   $$
a contradiction. We conclude that $r_2=\infty$.\\
\\
{\it Step 4.}  Since $u_r>0 >v_r$ on $(r_0,\infty)$, we know that
$\lim_{r\to\infty}e^u$ and  $\lim_{r\to\infty}e^v$ exist. By the assumption that $u(r_0)>v(r_0)$ and $u(r_0)>\log\frac{1}{2}$,
$$\lim_{r\to\infty}e^{u(r)}>\frac{1}{2} \mbox{  and  } \lim_{r\to\infty}e^{v(r)}< e^{u(r_0)}<1, $$
which is a contradiction to Lemma \ref{remark1}. Hence, if  $u(r_0)>v(r_0)$ and $u_r(r_0)\geq 0 \geq v_r(r_0)$, then $f_1(r_0)>0$.
\end{proof}

\begin{remark}
When $(N_1,N_2)=(0,0)$, $u_r(0)=v_r(0)=0$.    Lemma \ref{1} suggests that $\Omega\not= \mathbb{R}^-\times \mathbb{R}^-$.
\end{remark}

Note that the definition of the $S_{[a,b]}$-profile (resp.  reversive $S_{[a,b]}$-profile)  of $f(r)$ does not require $f(r)$ attains local maximum at $b$ (resp.  $a$). However, we can extend the interval $[a,b]$ until $f(r)$ attains local maximum at $c\in(b,\infty)$(resp. $c\in[0,a)$).  Hence, the following two lemmas shows, under certain conditions, the local  maximum value of the upper function is less than $\log\frac{1}{2}$, which is crucial to the proof of Theorem \ref{thm1}.

\begin{lemma}\label{co1}
\begin{itemize}
\item[(1).] Assume that $u(r)$ has an $S_{[r_0,r_1]}$-profile, $v(r)<u(r)$ in $(r_0,r_1)$ and $v_r(r_0)< 0$. Then $u(r_1)<\log\frac{1}{2}$.
\item[(2).] Assume that $u(r)$ has a reversive $S_{[r_0,r_1]}$-profile, $v(r)<u(r)$ on $(r_0,r_1)$  and $v_r(r_1)> 0$. Then $u(r_0)<\log\frac{1}{2}$.
\end{itemize}
\end{lemma}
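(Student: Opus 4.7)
The plan is a proof by contradiction: assume $u(r_1) \geq \log\frac{1}{2}$. Since $u$ has an $S_{[r_0,r_1]}$-profile, $u_r$ vanishes at both endpoints and is non-negative in between, so $u_{rr}(r_0) \geq 0 \geq u_{rr}(r_1)$; via the equation for $u$, this gives $(f_2-2f_1)(r_0) \geq 0 \geq (f_2-2f_1)(r_1)$. Writing $p = e^u$ and $q = e^v$, an algebraic rearrangement gives $f_2 - 2f_1 = -Q_p(q)$ where $Q_p(q) = 2q^2 - q(1-p) + 2p(1-2p)$, a quadratic in $q$ whose discriminant $\Delta(p) = 33p^2 - 18p + 1$ has roots $p_{\pm} = (9 \pm 4\sqrt{3})/33$, with $p_+ < \frac12$. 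When $\Delta(p) \geq 0$, set $q_\pm(p) = ((1-p) \pm \sqrt{\Delta(p)})/4$. A short computation shows $q_+$ is strictly increasing on $[p_+, 1)$ with $q_+(\frac12) = \frac14$, while $q_-(p) > p$ on $(0, p_-]$. The low $u(r_0)$ regimes are ruled out immediately: for $e^{u(r_0)} \in (p_-, p_+)$, $\Delta < 0$ forces $Q_{p_0} > 0$, contradicting the sign at $r_0$; for $e^{u(r_0)} \leq p_-$, the condition $Q_{p_0}(e^{v(r_0)}) \leq 0$ would require $e^{v(r_0)} \geq q_-(p_0) > p_0 > e^{v(r_0)}$ (using $v(r_0) < u(r_0)$), again impossible. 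Hence $e^{u(r_0)} \geq p_+$ and $e^{v(r_0)} \leq q_+(e^{u(r_0)})$; meanwhile at $r_1$, since $e^{u(r_1)} \geq \frac12$ and $q_-(e^{u(r_1)}) \leq 0$, $(f_2-2f_1)(r_1) \leq 0$ forces $e^{v(r_1)} \geq q_+(e^{u(r_1)}) \geq q_+(\frac12) = \frac14$.

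The key intermediate step is the monotonicity claim $v_r(r) < 0$ on $(r_0, r_1]$. Suppose its first zero is at $r^* \in (r_0, r_1]$; then $v_{rr}(r^*) \geq 0$ yields $f_1(r^*) \geq 2 f_2(r^*) > 0$ (using $u > v$ to ensure $f_2 > 0$). By Lemma \ref{1}, $f_1(r_0) > 0$; the case $u(r_0) = v(r_0)$ is excluded because it would give $(f_2-2f_1)(r_0) = -e^{u(r_0)}(1-e^{u(r_0)}) < 0$, inconsistent with the $S$-profile. Mimicking the proof of Lemma \ref{1}, at any zero $\tilde r$ of $f_1$ in $(r_0, r^*)$, $e^{v(\tilde r)} = 2e^{u(\tilde r)} - 1$ and
\[
f_1'(\tilde r) = -2 e^{2u(\tilde r)} u_r(\tilde r) + e^{u(\tilde r)+v(\tilde r)} v_r(\tilde r) < 0,
\]
since both terms are nonpositive and the second is strict via $v_r < 0$ on $(r_0, r^*)$. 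Thus $f_1$ can only cross zero downward, so with $f_1(r_0), f_1(r^*) > 0$ no such zero exists: $f_1 > 0$ on $[r_0, r^*]$, and likewise $f_2 > 0$. Setting $G(r) := r(u+v)_r$, summing the two equations gives $G_r = -r(f_1+f_2) < 0$, so $G(r^*) < G(r_0) = r_0 v_r(r_0) < 0$; but $G(r^*) = r^* u_r(r^*) \geq 0$ by the $S$-profile, a contradiction. Hence $v_r < 0$ on $(r_0, r_1]$ and in particular $e^{v(r_1)} < e^{v(r_0)}$.

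Chaining the inequalities,
\[
e^{v(r_1)} < e^{v(r_0)} \leq q_+(e^{u(r_0)}) \leq q_+(e^{u(r_1)}) \leq e^{v(r_1)},
\]
where the third inequality uses $e^{u(r_0)} \leq e^{u(r_1)}$ (from $u_r \geq 0$) together with the monotonicity of $q_+$ on $[p_+, 1)$. The resulting $e^{v(r_1)} < e^{v(r_1)}$ is the desired contradiction, establishing part (1). Part (2) follows by an entirely analogous argument with the roles of $r_0$ and $r_1$ interchanged and the reversive $S$-profile structure in place. The principal obstacle is the monotonicity claim in the previous paragraph; once that is in hand, the remainder is bookkeeping on the quadratic $Q_p(q)$.
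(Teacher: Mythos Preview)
Your argument is correct, and it takes a genuinely different route from the paper's proof. Two points of comparison are worth noting.

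\emph{The monotonicity step.} Both proofs need $v_r<0$ on $(r_0,r_1]$, but the paper obtains it in one line: from the combination $r(u+2v)_r(r)=r_0(u+2v)_r(r_0)-3\int_{r_0}^r sf_2\,ds$, together with $(u+2v)_r(r_0)=2v_r(r_0)<0$ and $f_2>0$ (immediate from $u>v$), one gets $(u+2v)_r<0$; since $u_r\ge 0$, this forces $v_r<0$. Your route through $G=r(u+v)_r$ requires $f_1>0$ as well, which is why you must invoke Lemma~\ref{1} and the downward-crossing argument for zeros of $f_1$. (A minor imprecision: Lemma~\ref{1} only gives $f_1(r_0)\ge 0$. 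The case $f_1(r_0)=0$ is harmless, since your own computation shows $f_1'(r_0)=e^{(u+v)(r_0)}v_r(r_0)<0$, which would make $f_1$ dip negative and then be unable to return to $f_1(r^*)>0$; but you should say so explicitly.)

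\emph{The contradiction mechanism.} The paper locates a last zero $r_2$ of $f_2-2f_1$ in $[r_0,r_1)$ and shows by a direct derivative computation (splitting into cases $e^{u(r_2)}\ge\frac12$ and $e^{u(r_2)}<\frac12$) that $\frac{d}{dr}(f_2-2f_1)(r_2)>0$, contradicting the required $\le 0$. Your approach instead treats $f_2-2f_1=-Q_p(q)$ as a quadratic in $q=e^v$, tracks the root $q_+(p)$, and chains $e^{v(r_1)}<e^{v(r_0)}\le q_+(e^{u(r_0)})\le q_+(e^{u(r_1)})\le e^{v(r_1)}$. This is more algebraic and makes the role of the constraint $e^{u(r_1)}\ge\frac12$ quite transparent (it is exactly what forces $q_-(e^{u(r_1)})\le 0$ and hence $e^{v(r_1)}\ge q_+$). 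The paper's derivative approach, on the other hand, avoids the preliminary case analysis you need to pin down $e^{u(r_0)}\ge p_+$. Both arguments are sound; the paper's is shorter overall, mainly because its proof of $v_r<0$ is so economical.
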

\begin{proof}
(1) We prove this lemma by contradiction. Assume that $u(r_1)\geq\log\frac{1}{2}$.\\
\\
{\it Step 1.}{\it  We first show that  $v_r(r)<0$ on $[r_0,r_1]$. }\par
Since we suppose $v(r)<u(r)$ on $(r_0,r_1)$ and $u_r(r_0)=0> v_r(r_0)$, we have
$$r(u+2v)_r(r)=r_0(u+2v)_r(r_0)-3\int_{r_0}^r sf_2ds<0\,\mbox{    for } r\in[r_0,r_1].$$
By this and $u_r(r)\geq 0$ on $[r_0,r_1]$,  $v_r(r)<0$ on $[r_0,r_1]$.\\
\\
{\it Step 2.} {\it We  show that there exists  $r_2\in[r_0, r_1)$ such that
$$(f_2-2f_1)(r)<0\mbox{   on   } (r_2,r_1) \mbox{   and  }(f_2-2f_1)(r_2)=0 . $$}\par

Since $u_r(r)\geq 0$ on $[r_0,r_1]$ and $u_r(r_0)=u_r(r_1)=0$, then
\begin{equation}
0\leq u_{rr}(r_0)=(f_2-2f_1)(r_0)
\end{equation}
and
\begin{equation}
0\geq u_{rr}(r_1)=(f_2-2f_1)(r_1).
\end{equation}
Hence, if $(f_2-2f_1)(r_1)<0$, we are done.  If $(f_2-2f_1)(r_1)=0$, by computing $\frac{d}{dr}(f_2-2f_1)(r_1)$, we have
\begin{equation}
\begin{split}
\frac{d}{dr}(f_2-2f_1)(r_1)&=2(e^{u}-2e^{2u}-e^{2v})\big|_{r_1}v_r(r_1)-2(e^u-4e^{2u}+\frac{1}{2} e^{u+v})\big|_{r_1}u_r(r_1)\\
&=2(e^{u}-2e^{2u}-e^{2v})\big|_{r_1}v_r(r_1)>0,
\end{split}
\end{equation}
where $u_r(r_1)=0$ and $u(r_1)\geq\log\frac{1}{2}$ are used. Thus, there exists $r_2$ with
$$(f_2-2f_1)(r)<0\mbox{   on   } (r_2,r_1) \mbox{   and  }(f_2-2f_1)(r_2)=0. $$
Consequently,
\begin{equation}\label{eq008}
\frac{d}{dr}(f_2-2f_1)(r_2)\leq 0.
\end{equation}\\
\\
{\it Step 3.} {\it The calculation of $\frac{d}{dr}(f_2-2f_1)(r_2)$ will lead to a contradiction.}\par
If $e^{u(r_2)}\geq \frac{1}{2}$, by using $(f_2-2f_1)(r_2)=0$ and $v_r(r_2)<0\leq u_r(r_2)$,  we have
\begin{equation}\label{eq026}
\begin{split}
&\frac{d}{dr}(f_2-2f_1)(r_2)\\
=&2(e^{u}-2e^{2u}-e^{2v})\big|_{r_2}v_r(r_2)-2(e^u-4e^{2u}+\frac{1}{2} e^{u+v})\big|_{r_2}u_r(r_2)>0.
\end{split}
\end{equation}
It contradicts to \eqref{eq008}.\par

If $e^{u(r_2)}<\frac{1}{2}$,  there is $r_3\in(r_2,r_1]$ such that
$e^{u(r_3)}=\frac{1}{2}$. At $r=r_3$, one has that
$$(f_2-2f_1)(r_3)=\frac{1}{2}e^{v(r_3)}-2e^{2v(r_3)}\leq0,$$
which implies that $e^{v(r_3)}\geq \frac{1}{4}$. It follows  that
$$e^{v(r_2)}>e^{v(r_3)}\geq\frac{1}{4}.$$
and thus $e^{v(r_2)}-2e^{2v(r_2)}=g(v(r_2))<\frac{1}{8}$.
By this and $(f_2-2f_1)(r_2)=0$, we have
\begin{equation}\label{eq009}
\begin{split}
\frac{d}{dr}(f_2-2f_1)(r_2)&=(e^v-4e^{2v}-e^{u+v})\big|_{r_2}v_r(r_2)-2(e^v-2e^{2v}-2 e^{2u})\big|_{r_2}u_r(r_2)\\
&\geq-e^{(u+v)(r_2)}v_r(r_2) >0,
\end{split}
\end{equation}
a contradiction to \eqref{eq008}.   Hence, $u(r_1)<\log\frac{1}{2}$.\\
\\
(2) Heuristically, this part can be viewed as the reflection of the part (1).  Although equations \eqref{QWE} change after reflection, we can  still apply the techniques of the part (1) to prove part (2). Hence, we omit the details of the proof and only sketch it.
This proof can be based on the following three steps:
\begin{itemize}
\item[\it Step 1.] We first show that  $v_r(r)>0$ on $[r_0,r_1]$.
\item[\it Step 2.]  We   show   there exists  $r_2\in(r_0, r_1]$ such that
$$f_2-2f_1<0\mbox{   on   } (r_0,r_2) \mbox{   and  }(f_2-2f_1)(r_2)=0.$$
\item[\it Step 3.]  The calculation of $\frac{d}{dr}(f_2-2f_1)(r_2)$ will lead to a contradiction.
\end{itemize}

\end{proof}

%
%

\begin{lemma}\label{ll}\begin{itemize}
\item[(1).] Assume that $u(r)$ has a $S_{[r_0,r_2]}$-profile, $u(r_1)=v(r_1)<\log\frac{1}{2}$ for some $r_1\in(r_0,r_2)$ and $v(r)$ is decreasing on $(r_0,r_1)$.     Then $u(r_2)<\log\frac{1}{2}$.
\item[(2).] Assume that $u(r)$ has a reversive $S_{[r_0,r_2]}$-profile,  $u(r_1)=v(r_1)<\log\frac{1}{2}$  for some $r_1\in(r_0,r_2)$ and $v(r)$ is increasing on $(r_0,r_1)$. Then $u(r_0)<\log\frac{1}{2}$.
\end{itemize}
\end{lemma}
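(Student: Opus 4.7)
The plan is to prove Part (1) by contradiction, closely following the template of Lemma~\ref{co1}(1), and to obtain Part (2) by the reflection/time-reversal argument analogous to how Lemma~\ref{co1}(2) was derived from (1).

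For Part (1), I would suppose for contradiction that $u(r_2)\ge\log\tfrac{1}{2}$ and first pin down the sign configuration at $r_1$. The $S_{[r_0,r_2]}$-profile of $u$ gives $u_r(r_1)\ge 0$, and $v$ decreasing on $(r_0,r_1)$ gives $v_r(r_1)\le 0$. Moreover $u\not\equiv v$: if $u\equiv v$, the system reduces to the scalar equation $u_{rr}+\tfrac{1}{r}u_r=e^{2u}-e^u$, and the decrease of $v=u$ on $(r_0,r_1)$ combined with the $S$-profile forces $u_r\equiv 0$ on $(r_0,r_1)$, hence $u\equiv 0$ there, contradicting $u(r_1)<\log\tfrac{1}{2}$. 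ODE uniqueness then yields $u_r(r_1)\ne v_r(r_1)$, so $u_r(r_1)>v_r(r_1)$, and thus $u>v$ on a right neighborhood of $r_1$.

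Next I would upgrade this to $u>v$ on all of $(r_1,r_2]$. Supposing a first re-crossing point $r_3\in(r_1,r_2]$ with $u(r_3)=v(r_3)$ existed, the inequality $u>v$ on $(r_1,r_3)$ forces $(u-v)_r(r_3)\le 0$, hence $v_r(r_3)\ge u_r(r_3)\ge 0$. I would exclude this configuration by a sign tracking in the spirit of Step~1 of Lemma~\ref{co1}(1), using the identity $(r(u+2v)_r)_r=-3rf_2$: starting from $v(r_1)<\log\tfrac{1}{2}$ with $v_r(r_1)\le 0$, the positivity of $f_2$ whenever $v<\log\tfrac{1}{2}$, together with the initial negativity of $r_1(u+2v)_r(r_1)$, would preclude $v_r$ from turning non-negative before the next crossing. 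With $u>v$ on $(r_1,r_2]$ in hand, the remaining steps mirror Steps~2--3 of Lemma~\ref{co1}(1): the identity $(ru_r)_r=r(f_2-2f_1)$ and $u_r(r_0)=u_r(r_2)=0$ force $(f_2-2f_1)(r_0)\ge 0\ge(f_2-2f_1)(r_2)$; when the right inequality is strict I set $r_4=\sup\{r<r_2:(f_2-2f_1)(r)=0\}$, giving $(f_2-2f_1)(r_4)=0$, $(f_2-2f_1)<0$ on $(r_4,r_2)$, and $\tfrac{d}{dr}(f_2-2f_1)(r_4)\le 0$. A direct computation of this derivative — substituting the relation $(f_2-2f_1)(r_4)=0$ and splitting into the cases $e^{u(r_4)}\ge\tfrac{1}{2}$ and $e^{u(r_4)}<\tfrac{1}{2}$ exactly as in Lemma~\ref{co1}(1), using $u_r(r_4)\ge 0$ and $v_r(r_4)<0$ — yields a strictly positive value, a contradiction. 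The borderline case $(f_2-2f_1)(r_2)=0$ is handled by performing the analogous derivative computation at $r_2$ itself with $u_r(r_2)=0$ and $u(r_2)\ge\log\tfrac{1}{2}$.

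The main obstacle will be the no re-crossing step, since unlike in Lemma~\ref{co1}(1) — where $v<u$ throughout the relevant interval is a hypothesis — here it must be extracted from the contradiction assumption $u(r_2)\ge\log\tfrac{1}{2}$ and the ODE structure. Part~(2) then follows from Part~(1) by the reflection/time-reversal argument indicated in the omitted proof of Lemma~\ref{co1}(2): the reversive $S$-profile of $u$ and the increasing $v$ on $(r_0,r_1)$ mirror the setup of Part~(1) when the interval is traversed from right to left, and the same chain of sign analyses and derivative computations applies with $r_0$ and $r_2$ interchanged.
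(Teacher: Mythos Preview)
Your strategy follows Lemma~\ref{co1}(1) too literally and misses the new obstruction created by the crossing point $r_1$. The key issue is where the last zero $r_4$ of $f_2-2f_1$ in $[r_0,r_2)$ actually lies. At $r_1$ one has $u(r_1)=v(r_1)<\log\tfrac12$, so a direct computation gives $(f_2-2f_1)(r_1)=e^{2u(r_1)}-e^{u(r_1)}<0$; together with the derivative argument you quote from Lemma~\ref{co1}(1), this forces $(f_2-2f_1)<0$ on all of $[r_1,r_2)$. Hence $r_4\in[r_0,r_1)$, and there $u(r_4)<v(r_4)$. The sign calculation from Lemma~\ref{co1}(1) Step~3 breaks down in this regime: using the constraint $(f_2-2f_1)(r_4)=0$, the $u_r$-coefficient in $\tfrac{d}{dr}(f_2-2f_1)$ is $4e^{2u}-g(v)$, and with $u<v$ and $e^u$ possibly small there is no reason this is nonnegative. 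So your ``remaining steps mirror Lemma~\ref{co1}(1)'' claim does not close the contradiction.

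The paper's proof replaces this failed derivative argument with a genuinely different mechanism on $(r_0,r_1)$. First it shows $(u+v)_r<0$ on $(r_0,r_2]$ (this is where your sketch also stumbles: you invoke ``initial negativity of $r_1(u+2v)_r(r_1)$'', which is not given; the paper instead integrates $(r(u+v)_r)'=-r(f_1+f_2)$ from $r_0$, using $u_r(r_0)=0$, $v_r(r_0)\le0$, and a monotonicity argument to keep $f_2>0$, then bootstraps past $r_1$ via Lemma~\ref{1}). Next, from $(f_2-2f_1)<0$ on $[r_1,r_2)$ and the level $e^{u}=\tfrac12$ at some $r_5\in(r_1,r_2]$, it extracts $v(r_5)\ge\log\tfrac14$, hence $u(r_1)=v(r_1)>\log\tfrac14$. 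This feeds into a sign analysis of $f_1-f_2=g(u)-g(v)$ on $(r_0,r_1)$: since $(f_1-f_2)(r_1)=0$ with $\tfrac{d}{dr}(f_1-f_2)(r_1)<0$, while $(f_1-f_2)(r_0)<0$, there is a zero $r_7\in(r_0,r_1)$ at which property~(3) of $g$ gives $u(r_7)<\log\tfrac14<v(r_7)<\log\tfrac12$. Comparing $(u+v)(r_7)$ with $(u+v)(r_5)$ via the established monotonicity $(u+v)_r<0$ yields the contradiction $\log\tfrac18\le (u+v)(r_5)<(u+v)(r_7)<\log\tfrac18$. None of this appears in your outline, and it is the heart of the proof; Part~(2) is indeed the reflected version once Part~(1) is done correctly.
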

\begin{proof} We only prove the first part. As in Lemma \ref{co1}, the second part can be viewed as the reflection version of the first part heuristically.  We prove this lemma by contradiction.  Suppose that  $u(r_2)\geq\log\frac{1}{2}$.\\
{\it Step 1.} {\it We first show that $(u+v)_r(r)<0$ on $(r_0,r_2)$.}\par
 Since $u_r(r_0)=0$ and $u_r(r)\geq 0$ on $[r_0,r_2]$, we have $$0\leq u_{rr}(r_0)=(f_2-2f_1)(r_0)$$ and thus $f_2(r_0)>0$.   Since $u(r)$ is increasing and $v(r)$ is decreasing  on $(r_0, r_2)$, it follows that $(1-2e^{v(r)}+e^{u(r)})$ is increasing   on $(r_0, r_1)$.
Thus, we  have
$$f_2(r)=e^{v(r)}(1-2e^{v(r)}+e^{u(r)})>0\,\, \mbox{  on  }\,\,(r_0,r_1].$$
It follows that
\begin{equation}\label{e222}
r(v_r+u_r)(r)=r_0(v_r+u_r)(r_0)-\int_{r_0}^rs(f_1+f_2)ds<0,\text{      }r\in(r_0, r_1],
\end{equation}
because $v_r(r_0)\leq 0=u_r(r_0)$ and $f_1(r)>0$ for $r\in(r_0,r_1)$.\par
Note that $v(r)<u(r)$  on $(r_1,r_2]$ and  $u_r(r)\geq 0\geq v_r(r)$ on $[r_0,r_2]$. By applying Lemma \ref{1}, for each $r\in(r_1,r_2)$, we have $f_1(r)\geq0$ for $r\in(r_1,r_2)$. By \eqref{e222} again, we get
\begin{equation}
(u+v)_r(r)<0\mbox{   for  }r\in(r_0,r_2],
\end{equation}
which also implies $v_r(r)<0$ on $(r_0,r_2]$.\\
\\
{\it Step 2.} \textit{ We claim that $(f_2-2f_1)(r)<0$  on $[r_1,r_2)$.}\par
Note that
\begin{equation}\label{eq004}
\begin{split}
(f_2-2f_1)(r_1)&=-2e^{u(r_1)}+4e^{2u(r_1)}-e^{u(r_1)+v(r_1)}+e^{v(r_1)}-2e^{2v(r_1)}\\
&=-e^{u(r_1)}+e^{2u(r_1)}<0
\end{split}
\end{equation}
where $u(r_1)=v(r_1)$ is used.
As in the step 2 of Lemma \ref{co1} (1), we know $f_2-2f_1<0$ on some interval $(s,r_2)\subset[r_1,r_2)$.
Consequently, if the claim is not true, then there exists $r_3\in(r_1,r_2)$, such that
$$(f_2-2f_1)(r)<0 \mbox{   for   }r\in(r_3, r_2)\mbox{  and   } (f_2-2f_1)(r_3)=0,   $$
which implies \begin{equation}\label{eq010}
\frac{d}{dr}(f_2-2f_1)(r_3)\leq 0.
\end{equation}\par
As in the step 3 of Lemma \ref{co1} (1), the calculation of $\frac{d}{dr}(f_2-2f_1)(r_3)$ will lead to a contradiction.
 Note that $u_r(r_3)\geq 0>v_r(r_3)$.  If $e^{u(r_3)}\geq \frac{1}{2}$,  as in \eqref{eq026},
$$\frac{d}{dr}(f_2-2f_1)(r_3)>0,$$
a contradiction to \eqref{eq010}.  If $e^{u(r_3)}<\frac{1}{2}$, then there is $r_4\in(r_3,r_2]$ such that
$e^{u(r_4)}=\frac{1}{2}$. At $r=r_4$, one has that
$$(f_2-2f_1)(r_4)=\frac{1}{2}e^{v(r_4)}-2e^{2v(r_4)}\leq0$$
which implies that $e^{v(r_4)}\geq \frac{1}{4}$. It follows  that
$$e^{v(r_3)}>e^{v(r_4)}\geq\frac{1}{4}.$$  Hence, as in \eqref{eq009},
$$\frac{d}{dr}(f_2-2f_1)(r_3)>0,$$
a contradiction to \eqref{eq010}.  We conclude here that $(f_2-2f_1)(r)<0$  for  $r\in[r_1,r_2)$.\\
\\
{\it Step 3. } {\it  We show that $u(r_1)=v(r_1)>\log\frac{1}{4}$, which is important in the next step. }\par
Since  we suppose  $e^{u(r_2)}\geq \frac{1}{2}$ and $u(r_1)=v(r_1)<\log\frac{1}{2}$, there exits $r_5\in(r_1, r_2]$ such that $e^{u(r_5)}=\frac{1}{2}$. By the step 2,  $f_2-2f_1<0$ on $(r_1,r_2)$,
we have  $$(f_2-2f_1)(r_5)\leq 0.$$    Hence,
$$\frac{1}{2}e^{v(r_5)}-2e^{2v(r_5)}=(f_2-2f_1)(r_5)\leq 0,  $$
which implies
\begin{equation}\label{eq024}
e^{v(r_5)}\geq \frac{1}{4}.
\end{equation}
Since $v_r<0$ on $(r_0,r_2]$ and $r_1<r_5$, then
\begin{equation}\label{eq042}
u(r_1)=v(r_1)>v(r_5)\geq\log\frac{1}{4}.
\end{equation}
\\
{\it Step 4.} {\it We claim that there exits $r_6\in(r_0, r_1)$ such that
$$(f_1-f_2)(r)>0\mbox{    for  }r\in(r_6,r_1).$$}\par
Note that $(f_1-f_2)(r_1)=0$. To prove this claim, it suffices to show  $\frac{d}{dr}(f_1-f_2)(r_1)<0$.
We compute $\frac{d}{dr}(f_1-f_2)(r_1)$.
\begin{equation}\label{eq011}
\begin{split}
\frac{d}{dr}(f_1-f_2)(r_1)&=\frac{d}{dr}\big[(e^u-2e^{2u})-  (e^v-2e^{2v}) \big]\big|_{r_1} \\
&=(e^{u(r_1)}-4e^{2u(r_1)})(u_r(r_1)-v_r(r_1)),
\end{split}
\end{equation}
where $u(r_1)=v(r_1)$ is used.
By \eqref{eq042} and $u_r\geq 0>v_r$ on $(r_0,r_2)$,
$$\frac{d}{dr}(f_1-f_2)(r_1)=(e^{u(r_1)}-4e^{2u(r_1)})(u_r(r_1)-v_r(r_1))<0.$$ \\
{\it Step 5.} Recall that
$$u_r(r_0)=0\text{   and   } (f_2-2f_1)(r_0)=u_{rr}(r_0)\geq 0,$$
which implies $(f_1-f_2)(r_0)\leq -f_1(r_0)<0$.
Combining this and  the step 4, we know  that
there is $r_7\in(r_0,r_1)$ such that
$$(f_1-f_2)(r_7)=0.$$
Note that $u(r_7)<v(r_7)$ and $g(u(r_7))-g(v(r_7))=f_1(r_7)-f_2(r_7)=0$.
We obtain that
\begin{equation}\label{e4}
u(r_7)<\log\frac{1}{4}<v(r_7)<\log\frac{1}{2}.
\end{equation}
Note that $(u+v)_r<0$ on $(r_0,r_2)$, and $r_7<r_5$.
By this, \eqref{e4}  and \eqref{eq024},  we obtain
\begin{equation}
\log\frac{1}{4}+\log\frac{1}{2}\leq (u+v)(r_5)<(u+v)(r_7)<  \log\frac{1}{2}+\log\frac{1}{4},
\end{equation}
a contradiction. Hence,  $u(r_2)<\log\frac{1}{2}$.\\
\end{proof}

\section{ASYMPTOTIC BEHAVIOR (1): WITHOUT INTERSECTION}
In this section, under certain conditions, we discuss the asymptotic behaviors of $u$  and $v$ when they do have intersection for $r$ sufficiently large.  We first exclude one special case:
\begin{itemize}
\item[$(*)$]  There exists $R_0>0$, so that
one of  $u(r)$ and $v(r)$ is decreasing to $\log\frac{1}{2}$, and the another one is decreasing to $-\infty$ for $r\geq R_0$.
\end{itemize}
\begin{lemma}\label{lemma84}
There is no solution of \eqref{QWE}  which satisfies condition $(*)$.
\end{lemma}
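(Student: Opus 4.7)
The plan is to argue by contradiction. Assume condition $(*)$ holds and, by symmetry, suppose that on $[R_0,\infty)$ the function $u(r)$ decreases to $\log\tfrac{1}{2}$ while $v(r)$ decreases to $-\infty$. Set $w:=u-\log\tfrac{1}{2}>0$ and $V:=e^{v}>0$, so $w_r<0$, $w\to 0^+$, and $V\to 0^+$ as $r\to\infty$. The argument proceeds in three steps, ending with a polynomial-versus-exponential mismatch for $w$.

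\textbf{Step 1: exponential upper bound for $w$.} The identity
$$f_2-2f_1 \;=\; \bigl(e^{2w}-e^{w}\bigr)+e^{v}\bigl(1-2e^{v}-e^{u}\bigr)$$
is algebraic. For $r$ sufficiently large both summands are nonnegative (the first because $w\geq 0$; the second because $e^v\to 0$ and $e^u\to\tfrac12$), and the elementary inequality $e^{2w}-e^{w}\geq w$ on $[0,\infty)$ gives $f_2-2f_1\geq w$. Therefore $(rw_r)_r=r(f_2-2f_1)\geq rw$, i.e., $L_1 w\geq 0$, where $L_1\phi:=\phi_{rr}+\tfrac{1}{r}\phi_r-\phi$ is the radial modified Bessel operator. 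Let $K_0$ be the modified Bessel function of the second kind, so $L_1K_0=0$ and $K_0>0$ on $(0,\infty)$. Choose $R_1$ large and set $\lambda:=w(R_1)/K_0(R_1)>0$; then $\psi:=w-\lambda K_0$ satisfies $L_1\psi\geq 0$ on $[R_1,\infty)$ with $\psi(R_1)=0=\psi(\infty)$. A positive interior maximum of $\psi$ at $r^{\ast}$ would give $L_1\psi(r^{\ast})\leq-\psi(r^{\ast})<0$, a contradiction, so $\psi\leq 0$. Hence
$$w(r)\;\leq\;\lambda K_0(r)\;=\;O\bigl(r^{-1/2}e^{-r}\bigr)\qquad\text{as }r\to\infty.$$

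\textbf{Step 2: $v$ must decay logarithmically with rate greater than $2$.} By the symmetric expansion
$$f_1-2f_2 \;=\; -\epsilon_1-\tfrac{5}{2}V + O\bigl(\epsilon_1^{2}+V^{2}+\epsilon_1V\bigr),\qquad \epsilon_1:=e^{u}-\tfrac{1}{2},$$
where $\epsilon_1\leq w$ is exponentially small by Step~1, one has $(rv_r)_r<0$ for $r$ large, so $rv_r$ is eventually monotone with a limit $L\in[-\infty,0]$. I claim $L<-2$. Indeed, $L=0$ combined with $v\to-\infty$ forces $v=o(\log r)$, hence $V\geq r^{-\varepsilon}$ for every $\varepsilon>0$ eventually and $\int^{\infty}rV\,dr=\infty$, which on integrating the $v$-equation contradicts $L=0$. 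If $L=-\infty$, then $v/\log r\to-\infty$, so $V=o(r^{-C})$ for every $C>0$; then $\int^{\infty}rV\,dr<\infty$, so in fact $rv_r$ converges, contradicting $L=-\infty$. Finally $L\in(-2,0)$ gives $V\sim r^{L}$ with $|L|\leq 2$ and $\int^{\infty}rV\,dr=\infty$, again contradicting finiteness of $L$. Thus $L<-2$, i.e., $v(r)=-\beta\log r+o(\log r)$ with $\beta:=-L>2$. In particular, picking any $\beta'\in(\beta,\infty)$ (so $\beta'>2$), there exist $c_0>0$ and $R_2\geq R_1$ with $V(r)\geq c_0\,r^{-\beta'}$ on $[R_2,\infty)$.

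\textbf{Step 3: polynomial lower bound for $w$, and the contradiction.} Retaining the second summand of the Step~1 identity, for $r\geq R_2$ large (so $1-2V-e^u\geq\tfrac18$) we get $f_2-2f_1\geq\tfrac{1}{8}V\geq\tfrac{c_0}{8}\,r^{-\beta'}$, hence $(rw_r)_r\geq\tfrac{c_0}{8}\,r^{1-\beta'}$. Since $rw_r$ is increasing and bounded above by $0$, and any nonzero limit would make $|w|\to\infty$, one has $rw_r\to 0$. Integrating once from $r$ to $\infty$ yields $-rw_r(r)\geq\tfrac{c_0}{8(\beta'-2)}\,r^{2-\beta'}$, and integrating once more, using $w(\infty)=0$, gives
$$w(r)\;\geq\;\frac{c_0}{8(\beta'-2)^{2}}\,r^{2-\beta'}\qquad\text{for }r\text{ large}.$$
Since $\beta'>2$, this polynomial lower bound contradicts the exponential upper bound $w(r)=O(r^{-1/2}e^{-r})$ from Step~1. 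Therefore condition $(*)$ cannot hold.

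The main technical obstacle is Step~2: pinning down the self-consistent rate $\beta>2$ for the logarithmic decay of $v$ requires ruling out both intermediate slow regimes ($|L|\leq 2$) and super-logarithmic regimes via the $v$-equation. Once this scale is locked in, the polynomial-versus-exponential collision between Steps~1 and~3 closes the argument.
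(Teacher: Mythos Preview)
Your argument is correct and, in fact, streamlines the paper's own proof. Both proofs start from the same differential inequality (writing $w=\hat u=u-\log\tfrac12>0$)
\[
(rw_r)_r \;\ge\; r\,w \;+\; c\,r\,e^{v},
\]
but they exploit it differently. The paper first establishes $f_1,f_2\in L^1$, deduces $rv_r\to-\beta<-2$ and $rw_r\to0$, and then \emph{iterates} the polynomial lower bound: from $w\ge c_1 r^{2-\beta}$ it feeds $rw$ back into the inequality to get $w\ge c_2 r^{4-\beta}$, then $w\ge c_3 r^{6-\beta}$, and so on, gaining a factor $r^2$ at each step until the exponent becomes nonnegative and $w\to0$ is violated. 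You instead use the $rw$ term once and for all via a supersolution comparison with the modified Bessel function $K_0$, obtaining directly the exponential bound $w=O(r^{-1/2}e^{-r})$; a single polynomial lower bound $w\ge c\,r^{2-\beta'}$ from the $e^v$ term then already contradicts this. Your route avoids the bootstrap entirely, at the cost of invoking the Bessel comparison principle.

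Two minor points. In Step~2 your trichotomy $L=0$, $L\in(-2,0)$, $L=-\infty$ omits $L=-2$; but since $rv_r$ is strictly decreasing and negative, one has $rv_r(r)>-2$ if $L=-2$, hence $e^v\ge c r^{-2}$ and $\int^\infty r e^v\,dr=\infty$, and your $L\in(-2,0)$ argument applies verbatim. (The case $L=0$ is in fact vacuous for the same monotonicity reason.) Second, in Step~3 you need $\beta'>2$ for the integrals to converge, which is automatic since $\beta'>\beta>2$; you might state this explicitly.
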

\begin{proof}
{\it Step 1. } With out loss of generality, we assume that $u$ and $v$ are decreasing to $\log\frac{1}{2}$ and $-\infty$  on $(R_0,\infty)$ respectively. We write $u=\log\frac{1}{2}+\hat{u}$ (Here, $\hat{u}>0$ on $(R_0,\infty)$).   Thus, there exists $R_1>>R_0$ so that
\begin{equation}\label{e00011}
\Delta \hat{u}\geq  \frac{1}{4}e^{v}+\hat{u}.
\end{equation}
We will show there is no such solution $u=\log\frac{1}{2}+\hat{u}$ which satisfies \eqref{e00011}.\\
{\it Step 2.} {\it We show that $f_2\in L^1(\mathbb{R}^2)$ which is important to the  estimates of $\lim_{r\to\infty}ru_r(r)$ and $\lim_{r\to\infty}rv_r(r)$.}\par
We may assume that $v(r)<\log\frac{1}{4}$ on $(R_1,\infty)$.  Thus, we only need to show that $\int_{R_1}^{\infty}sf_2(s)ds<\infty$.
For $r>>R_1$,
\begin{equation}\label{e010}
\begin{split}
rv_r(r)=&2N_2+\int_0^{r}s(f_1-2f_2)(s)ds\\
=&2N_2+\int_0^{R_1}s(f_1-2f_2)(s)ds+\int_{R_1}^{r}s(f_1-2f_2)(s)ds\\
\leq & 2N_2+\int_0^{R_1}s(f_1-2f_2)(s)ds-\int_{R_1}^rsf_2(s)ds
\end{split}
\end{equation}
where $(f_1-f_2)(s)=g(u(s))-g(v(s))<0$ on $(R_1,\infty)$ is used.
Suppose that $\int_{R_1}^{\infty}sf_2(s)ds=\infty$, then
$$rv_r(r)\to-\infty\mbox{   as  }r\to\infty. $$
It implies that $\int_{R_1}^{\infty}se^{v(s)}ds<\infty$ and thus
$\int_{R_1}^{\infty}sf_2(s)ds<\infty$ where $u<0$ is used.
We conclude that $f_2\in L^1(\mathbb{R}^2)$.\\
{\it Step 3.} {\it We show that $\lim_{r\to\infty}rv_r(r)<-2$.}\par
One can see that $rv_r(r)$ is a decreasing function on  $(R_1,\infty)$ from \eqref{e010}. Hence, if $\lim_{r\to\infty}rv_r(r)\geq- 2$, then
$$rv_r(r)\geq -2\mbox{   on  }(R_1,\infty),$$
which makes the $L^1(\mathbb{R}^2)$-integrability of $f_2$ fail.
Therefore, $\lim_{r\to\infty}rv(r)<-2$.\\
{\it Step 4.} {\it We show that $f_1\in L^1(\mathbb{R}^2)$.}\par
We split the nonlinear term $f_2-2f_1$ into
\begin{equation}\label{e00003}
\Big(e^v-2e^{2v}-e^{u+v}\Big)+ 2\Big(e^{u}-2e^{2u}\Big).
\end{equation}
Note that the first term of \eqref{e00003} is in $L^1(\mathbb{R}^2)$ by the step 2.,  and the second term of \eqref{e00003}  is negative for $r>R_1$. Hence,
$$\int_{R_1}^{\infty} s\Big(e^{u(s)}-2e^{2u(s)}\Big)ds >-\infty.$$
where $$ru_r(r)=R_1u_r(R_1)+\int_{R_1}^{\infty} s(e^{v(s)}-2e^{2v(s)}-e^{(u+v)(s)})ds+2 \int_{R_1}^{\infty} s(e^{u(s)}-2e^{2u(s)})ds          $$
and  the existence of  $\lim_{r\to \infty}u(r)$ are used.
It follows that $f_1\in L^1(\mathbb{R}^2)$.\\
{\it Step 5.} By the steps 2, 3 and 4, we have $\lim_{r\to\infty}ru_r(r)=0$ and $\lim_{r\to\infty}rv_r(r)=-\beta$ for some constant $\beta>2$.
Thus, for $R_2>R_1$ large enough, we have
\begin{equation}\label{e0001}
(r\hat{u}_r(r))_r\geq  \frac{1}{4}r^{1-\beta}+r\hat{u}(r)\mbox{   for    } r>R_2.
\end{equation}
Since $\hat{u}>0$ on $(R_2,\infty)$,
\begin{equation}\label{e0002}
(r\hat{u}_r(r))_r\geq  \frac{1}{4}r^{1-\beta}\mbox{   for    }  r>R_2.
\end{equation}
Integrating \eqref{e0002} from $r>R_2$ to $\infty$, we obtain
\begin{equation}\label{e0003}
-r\hat{u}_r(r)\geq  \frac{1}{4(\beta-2)}r^{2-\beta},
\end{equation}
where $\lim_{r\to\infty}ru_r(r)=0$ and $\beta>2$ are used.  Dividing \eqref{e0003} by $r$ and integrating it from  $r>R_2$ to $\infty$, we have
\begin{equation}\label{e0004}
\hat{u}(r)\geq  \frac{1}{4(\beta-2)^2}r^{2-\beta},
\end{equation}
where $\lim_{r\to\infty}u(r)=0$ and $\beta>2$ are used.\par
Plugging \eqref{e0004} into \eqref{e0001}, we have
\begin{equation}\label{e0005}
(r\hat{u}_r(r))_r\geq  \frac{1}{4}r^{1-\beta}  +\frac{1}{4(\beta-2)^2}r^{3-\beta}>\frac{1}{4(\beta-2)^2}r^{3-\beta}\mbox{   for    }  r>R_2.
\end{equation}
If $\beta>4$, integrating \eqref{e0005}  as in \eqref{e0002} and \eqref{e0003}, we obtain
$$\hat{u}(r)\geq \frac{1}{4(2-\beta)^2(4-\beta)^2}r^{4-\beta}.$$
If $\beta\leq 4$, we obtain a contradiction when we do integration as in  \eqref{e0002} and \eqref{e0003}.
Since $\beta$ is finite,  one can repeat this argument  at most a finite number of times to get a contradiction.

\end{proof}

In the first part of the  following lemma, we show that if
\begin{equation*}
 u(r)>v(r)\mbox{   on    }(r_0,\infty),\,\, u_r(r_0)\geq v_r(r_0),\mbox{   and      }(u+2v)_r(r_0)\leq 0,
\end{equation*}
then the behavior of $(u,v)$ must be case (1) described in the statement of Theorem \ref{thm1}.
In the second part of the  following lemma, we show that if
$$u(r)>v(r)\mbox{   on       }(r_0,\infty)\mbox{   and        }(u+v)_r(r)>0\mbox{ on  }(r_0,\infty),$$
$(u,v)$ must be a topological solution.
\begin{lemma}\label{lemma7}
\begin{itemize}
\item[(1)] Suppose that $u(r)>v(r)$ on $(r_0,\infty)$, $u_r(r_0)\geq v_r(r_0)$ and $(u+2v)_r(r_0)\leq 0$. Then\par
 $u(r)$ and $v(r)$ are less than $\log\frac{1}{2}$ on some interval $(R_1,\infty)\subseteq (r_0,\infty)$.
\item[(2)] Suppose that $u(r)>v(r)$ on $(r_0,\infty)$, and $(u+v)_r(r)>0$ on $(r_0,\infty)$ . Then
$$u(r)\mbox{  and  }v(r)  \mbox{ are increasing to   } 0 \mbox{   as   }r\to\infty.$$
\end{itemize}
\end{lemma}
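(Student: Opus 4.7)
The proof of both parts exploits the monotonicity of a suitable linear combination of $u$ and $v$.

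For part (1), observe that $w:=u+2v$ satisfies $w_{rr}+r^{-1}w_r=-3f_2$. Since $u>v$ and $v<0$ by Theorem~\ref{thm2}, we have $1-2e^v+e^u>1-2e^v+e^v=1-e^v>0$, so $f_2>0$ on $(r_0,\infty)$. Integrating $(rw_r)_r=-3rf_2$ from $r_0$ and using $w_r(r_0)\le 0$ yields $w_r(r)<0$ for every $r>r_0$; thus $w$ is strictly decreasing and $w(r)<w(r_0)<0$. Since $v<u$, it suffices to prove $u<\log\frac{1}{2}$ for $r$ large.

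The core step analyzes the extrema of $u$. If $u$ attains a local maximum at some $b>r_0$ preceded by a critical point $a\in[r_0,b)$, then $u$ has an $S_{[a,b]}$-profile with $u>v$ on $(a,b)$. At $a$, $u_r(a)=0$ combined with $w_r(a)\le 0$ yields $v_r(a)\le 0$, with strictness for $a>r_0$ (and for $a=r_0$ one argues using $u_r(r_0)\ge v_r(r_0)$). Lemma~\ref{co1}(1) then gives $u(b)<\log\frac{1}{2}$, so every local maximum of $u$ past $r_0$ lies below $\log\frac{1}{2}$. It remains to handle the case where $u$ is eventually monotone. If $u$ is eventually increasing, Lemma~\ref{remark1} forces $\lim u\in\{0,\log\frac{1}{2},-\infty\}$: the case $-\infty$ is excluded, and $\lim u=0$ gives $v\to 0$ so $w\to 0$, contradicting $w<w(r_0)<0$; hence $\lim u=\log\frac{1}{2}$ and $u<\log\frac{1}{2}$ throughout. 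If $u$ is eventually decreasing, Lemma~\ref{remark1} leaves only $\lim u=\log\frac{1}{2}$ with $v\to-\infty$, which is to be ruled out by Lemma~\ref{lemma84}.

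For part (2), $(u+v)_r>0$ makes $u+v$ strictly increasing; since $u,v<0$, the limit $L:=\lim_{r\to\infty}(u+v)\le 0$ exists, and $u,v$ are trapped in $((u+v)(r_0),0)$. Boundedness plus system~\eqref{QWE} bootstraps to uniform control on $u_r,v_r$, so subsequential limits of $(u,v)$ exist at infinity; each must by Lemma~\ref{remark1} be one of $(0,0)$, $(\log\frac{1}{2},-\infty)$, $(-\infty,\log\frac{1}{2})$, $(-\infty,-\infty)$, and the last three give $u+v\to-\infty$, contradicting $L$ finite. Hence $u,v\to 0$. Eventual monotonicity follows from showing that reversive $S$-profiles of $u$ cannot accumulate at infinity: at the right endpoint of such a profile, $(u+v)_r>0$ and $u_r=0$ give $v_r>0$, so Lemma~\ref{co1}(2) forces the local maximum value of $u$ to lie below $\log\frac{1}{2}$, and $u\to 0$ then precludes infinitely many such profiles.

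The most delicate point is the eventually decreasing subcase of part (1), where $u\to\log\frac{1}{2}$ and $v\to-\infty$ but $v$ is not a priori monotone, so Lemma~\ref{lemma84} does not apply directly. I would argue that any local minimum $c$ of $v$ with $v(c)\ll 0$ requires $(f_1-2f_2)(c)\ge 0$; with $u(c)$ close to $\log\frac{1}{2}$ and $e^{v(c)}$ negligible, $f_1(c)$ is close to $0$ while $2f_2(c)$ is positive of order $e^{v(c)}$, a contradiction for $r$ large. Thus $v$ has only finitely many local minima and is eventually monotone, allowing Lemma~\ref{lemma84} to close the argument.
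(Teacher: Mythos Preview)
Your part~(1) follows the paper's line closely: use $w=u+2v$ to get the nsi-condition, then Lemma~\ref{co1}(1) to cap local maxima of $u$ below $\log\frac12$. The one real looseness is your repeated appeal to Lemma~\ref{remark1} before you know $\lim_{r\to\infty}v$ exists. In the eventually \emph{increasing} subcase this is easily repaired (since $w_r<0$ and $u_r\ge0$ force $v_r<0$, so $v$ is monotone), but in the eventually \emph{decreasing} subcase you jump to ``$\lim u=\log\frac12$ and $v\to-\infty$'' without justification. The paper handles this cleanly: when $\lim u\ge\log\frac12$ one has $u\ge\log\frac12$ throughout, so $f_2-f_1=g(v)-g(u)>0$, and integrating $(r(u-v)_r)_r=3r(f_2-f_1)$ from $r_4$ (where $(u-v)_r(r_4)\ge0$) gives $v_r<u_r\le0$. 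Thus $v$ is decreasing, both limits exist, and Lemma~\ref{remark1} together with Lemma~\ref{lemma84} closes the case directly. Your local-minimum argument can be made to work too, but as written it only excludes minima with $v(c)\ll0$ and does not by itself force $v$ to be eventually monotone; you would also need the monotonicity of $w$ to first pin down $\lim v\in\{-\infty\}\cup\mathbb{R}$.

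Your part~(2) takes a genuinely different route from the paper and has a real gap. You assert that boundedness of $(u,v)$ ``bootstraps to uniform control on $u_r,v_r$'' and that subsequential limits of $(u(r),v(r))$ must land in the list from Lemma~\ref{remark1}. Neither step is justified: from $(ru_r)_r=r(f_2-2f_1)$ with bounded right-hand side you only get $u_r=O(r)$, not a uniform bound; and Lemma~\ref{remark1} is stated and proved for \emph{full} limits, not subsequential ones (its proof uses that the nonlinearities converge, forcing the limiting pair to be a zero of the right-hand side). The paper avoids this entirely by a trichotomy on $u$. If $u$ oscillates, the nsd-condition from $(u+v)_r>0$ plus Lemma~\ref{co1}(2) puts every local maximum of $u$ below $\log\frac12$, so $v<u<\log\frac12$ past some $\alpha$; then $f_1+f_2>2e^{u+v}\ge 2e^{(u+v)(\alpha)}$ and
\[
\alpha(u+v)_r(\alpha)\;\ge\;\int_\alpha^\infty s(f_1+f_2)\,ds\;\ge\;2e^{(u+v)(\alpha)}\int_\alpha^\infty s\,ds\;=\;\infty,
\]
a contradiction. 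If $u$ is eventually increasing, $\lim u$ exists; since $u+v$ is increasing and bounded above by $0$, $\lim(u+v)$ exists, hence $\lim v$ exists, and Lemma~\ref{remark1} forces $(u,v)\to(0,0)$. If $u$ is eventually decreasing, the nsd-condition makes $v$ eventually increasing, both limits exist but cannot both be $0$, contradicting Lemma~\ref{remark1}. Your final monotonicity argument (reversive $S$-profiles cannot accumulate once $u\to0$) is fine, but it presupposes $u\to0$, which is exactly the step your bootstrapping was meant to supply.
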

\begin{proof}
(1) {\it Step 1.} {\it  We show $u(r)$ and $v(r)$ satisfy the nsi-condition on $(r_0,\infty)$.}\par
Since we suppose $u(r)>v(r)$ on $(r_0,\infty)$ and $(u+2v)_r(r_0)\leq 0$, then
\begin{equation}\label{eq030}
r(u+2v)_r(r)=r_0(u+2v)_r(r_0)-3\int_{r_0}^rsf_2(s)ds<0\mbox{   for    }r\in(r_0,\infty).
\end{equation}
Consequently, $u(r)$ and $v(r)$ satisfy the nsi-condition on $(r_0,\infty)$.\\
{\it Step 2.}    We consider the following possible cases:
\begin{itemize}
\item[(a)] $u(r)$ oscillates  on $(r_0,\infty)$.
\item[(b)] $u(r)$ is increasing on some interval $(r_1,\infty)\subseteq(r_0,\infty)$.
\item[(c)] $u(r)$ is decreasing on some interval $(r_2,\infty)\subseteq(r_0,\infty)$.
\end{itemize}
{\it Step 2.1. }  Suppose  $u(r)$ oscillates  on $(r_0,\infty)$, which implies  $u(r)$ has infinitely many S-profile on $(r_0,\infty)$.
 Let $S_{[\alpha,\beta]}$ be its first S-profile on $(r_0,\infty)$.
Combining the nsi-condition on $(r_0,\infty)$ and Lemma \ref{co1} (1), the local maximum values of $u(r)$ on $(\alpha,\infty)$ are less than $\log\frac{1}{2}$. Since we suppose $u$ oscillates  on $(r_0,\infty)$, then
$$u(r)<\log\frac{1}{2}\, \mbox{  for   }r\in(\alpha,\infty).   $$\\
{\it Step 2.2. }  Suppose that $u(r)$ is increasing  on some interval $(r_1,\infty)\subset (r_0,\infty)$. By the nsi-condition on $(r_0,\infty)$,  $v(r)$ is strictly decreasing  on $(r_1,\infty)$. Lemma \ref{remark1} suggests
$$u(r)\mbox{   is increasing to  }\log\frac{1}{2}\mbox{ and }v(r) \mbox{     is decreasing to   }-\infty\mbox{ as }r\to\infty.$$\\
{\it Step 2.3. }  Suppose   $u(r)$ is decreasing on some interval $(r_2,\infty)\subseteq(r_0,\infty)$.  Then either $u(r)$ is decreasing on $(r_0,\infty)$  or there exits $r_3\in (r_2,\infty)$ such that $u$ attains local maximum at $r_3$ and $u$ is decreasing on $(r_3,\infty)$.
 Note that $v_r(r_3)<0$ because of \eqref{eq030}.  Let $r_4=r_0$ if the first case holds, and $r_4=r_3$ if the second case holds.\par

$u(r)$ is decreasing on $(r_4,\infty)$ implies the existence of $\lim_{r\to\infty}e^{u(r)}$. Thus, if  $\lim_{r\to \infty}e^u(r)<\frac{1}{2}$, then $u(r)$ is less than $\log\frac{1}{2}$ on some interval $(r_5,\infty)\subseteq(r_4,\infty)$.\par

We now show that if $\lim_{r\to\infty}e^{u(r)}\geq \frac{1}{2}$, then $v(r)$ is decreasing on  $(r_4,\infty)$.
Since we suppose $u(r)>v(r)$ and $u(r)\geq\log\frac{1}{2}$  for $r\in(r_4,\infty)$, then
$$f_2(r)-f_1(r)=g( v(r))-g(u(r))>0 \mbox{  on  }(r_4, \infty). $$
Hence,
$$r(u-v)_r(r)=r_4(u-v)_r(r_4)+3\int_{r_4}^r s(f_2-f_1)ds>0\mbox{   on  }(r_4,\infty). $$
It follows that $v_r(r)<u_r(r)\leq 0 $ on $(r_4,\infty)$, and thus $\lim_{r\to\infty}e^{u(r)}$ and $\lim_{r\to\infty }e^{v(r)}$ exist. It is a contradiction by Lemma \ref{remark1} and Lemma \ref{lemma84}.\\
(2) {\it Step 1.} As in the proof of (1), we consider  the following possible cases:
\begin{itemize}
\item[(1)] $u(r)$ oscillates  on $(r_0,\infty)$.
\item[(2)] $u(r)$ is increasing on some interval $(r_1,\infty)\subseteq(r_0,\infty)$.
\item[(3)] $u(r)$ is decreasing on some interval $(r_2,\infty)\subseteq(r_0,\infty)$.
\end{itemize}
It will be shown that only the case (2) is possible.\\
{\it Step 2. } {\it We show $u(r)$ cannot oscillate  on $(r_0,\infty)$. }\par
Suppose that $u$ oscillates  on $(r_0,\infty)$. Then  $u(r)$ has infinitely many reversive S-profile on $(r_0,\infty)$.  Let $S_{[\alpha,\beta]}$ be its first  reversive S-profile on $(r_0,\infty)$.
The assumption that $(u+v)_r(r)>0$ on $(r_0,\infty)$ implies the nsd-condition for $(u,v)$ on $(r_0,\infty)$.
Combining the nsd-condition on $(r_0,\infty)$ and Lemma \ref{co1} (b), the local maximum values of  $u(r)$ on $(\alpha,\infty)$ are less than $\log\frac{1}{2}$. Thus,
$$v(r)<u(r)<\log\frac{1}{2}\mbox{ on } (\alpha,\infty). $$
By this and $(u+v)_r(r)>0$ on $(r_0,\infty)$,
\begin{equation}
\begin{split}
\alpha(u+v)_r(\alpha)&\geq \int_{\alpha}^{\infty}s(f_1+f_2)ds\\
&\geq 2e^{(u+v)(\alpha)}\int_{\alpha}^{\infty}sds=\infty,
\end{split}
\end{equation}
which is a contradiction. Hence, $u(r)$ cannot oscillate  on $(r_0,\infty)$.\\
{\it Step 3. }  Suppose that $u(r)$ is increasing  on some interval $(r_1,\infty)\subset (r_0,\infty)$, which implies $\lim_{r\to \infty}u(r)$ exits.
By this and $(u+v)_r>0$ on $(r_0,\infty)$,
$$v(\infty)-v(r_1)>u(\infty)-u(r_1),$$
which implies $v(\infty)>-\infty$. Lemma \ref{remark1}  suggests that $\lim_{r\to \infty}u=\lim_{r\to \infty}v=0$.\\
{\it Step 4. }
Suppose that $u(r)$ is decreasing  on some interval $(r_2,\infty)\subset(r_0,\infty)$. By the nsd-condition for $(u,v)$ on $(r_0,\infty)$, $v$  is increasing  on  $(r_2,\infty)$. Thus, $\lim_{r\to\infty}u(r)$ and $\lim_{r\to\infty}v(r)$ exist and are not both equal to $0$, a contradiction to Lemma \ref{remark1}.\\
{\it Step 5. }{\it  Finally, we show  $v_r(r)>0$ for $r$ sufficiently large.}\par
Since $\lim_{r\to\infty}v=0$, there exists $r_3\in(r_0,\infty)$ such that
$v_r(r_3)>0$ and $u(r)>v(r)>\log\frac{1}{2}$ on $(r_3,\infty)$.
By this, we have $f_1-2f_2<0$ on $(r_3,\infty)$.
Hence,
$$rv_r(r)=r_3v_r(r_3)+\int_{r_3}^rs(f_1-2f_2)ds>0,$$
where $\lim_{r\to\infty}v(r)$ is used.

\end{proof}

\section{ASYMPTOTIC BEHAVIOR (2): GENERAL CASE}
In this section, we consider more general cases.  In the following lemma,  we show that if $u$ and $v$ have only one intersection point $s_2$ on $(s_1,\infty)$  with
 $$v_r(s_1)\leq u_r(s_1),\,\,(u+2v)_r(s_1)\leq 0,\,\,\,u>v\mbox{   on   }(s_1,s_2)\mbox{  and   } v>u\mbox{   on }(s_2,\infty),\,\,\, $$
then $v$ is less than $\log\frac{1}{2}$ on $(s_2,\infty)$.

\begin{lemma}\label{lemma88}
Let $0\leq s_1<s_2$.  Assume that   $s_2$ is a intersection point of $u(r)$ and $v(r)$, with  $u(r)>v(r)$ on $(s_1,s_2)$ and $v(r)>u(r)$ on $(s_2,\infty)$.
We further suppose  that 
$$v_r(s_1)\leq u_r(s_1)\mbox{    and   }(u+2v)_r(s_1)\leq 0.$$
Then  $$u(r) \mbox{  and  }v(r)\mbox{  are  less than   }  \log\frac{1}{2}\mbox{     on    }  [s_2, \infty).$$
\end{lemma}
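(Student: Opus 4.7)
The plan is to reduce Lemma \ref{lemma88} to a reversed-role version of Lemma \ref{lemma7} (1): first translate the hypotheses at $s_1$ into the analogous conditions at $s_2$, then apply Lemma \ref{lemma7} (1) with $u$ and $v$ interchanged to obtain the desired bound on a tail interval, and finally extend that bound back to all of $[s_2,\infty)$ by a local contradiction argument of the type used in Lemma \ref{co1}.

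First I would transport information from $s_1$ to $s_2$. On $(s_1,s_2)$ we have $u>v$ with both negative by Theorem \ref{thm2} (excluding the trivial solution), so $f_2>0$ throughout. Integrating the identity $\bigl(r(u+2v)_r\bigr)_r=-3rf_2$ from $s_1$, together with $(u+2v)_r(s_1)\leq 0$, yields $(u+2v)_r(s_2)<0$. Since $v-u$ transitions from negative to nonnegative at $s_2$, we also have $v_r(s_2)\geq u_r(s_2)$. Combining these two inequalities gives $u_r(s_2)<0$ and $(v+2u)_r(s_2)<0$. Then on $(s_2,\infty)$ the relation $v>u$ forces $f_1>0$, so $\bigl(r(v+2u)_r\bigr)_r=-3rf_1$ propagates $(v+2u)_r<0$ throughout $(s_2,\infty)$, which is the nsi-condition with $v$ in the role of the upper function. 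At this point Lemma \ref{lemma7} (1) with $u$ and $v$ interchanged at $r_0=s_2$ applies and produces a tail interval $(R_1,\infty)\subseteq(s_2,\infty)$ on which $u,v<\log\frac{1}{2}$.

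The remaining task is to extend the bound from $(R_1,\infty)$ to all of $[s_2,\infty)$, which I would do by contradiction. If $v$ reaches $\log\frac{1}{2}$ at some point of $[s_2,R_1]$, then by continuity and the tail bound $v$ attains a local maximum $r^*\in[s_2,R_1]$ with $v(r^*)\geq\log\frac{1}{2}$, $v_r(r^*)=0$, and $(f_1-2f_2)(r^*)=v_{rr}(r^*)\leq 0$, while the nsi-condition forces $u_r(r^*)<0$. Computing
\[
\tfrac{d}{dr}(f_1-2f_2)(r^*)=u_r(r^*)\,e^{u(r^*)}\bigl(1-4e^{u(r^*)}-e^{v(r^*)}\bigr)
\]
and performing the same sign analysis as in Step 3 of Lemma \ref{co1} (1) (splitting on whether $e^{u(r^*)}\geq \tfrac{1}{4}$ and exploiting that $(f_1-2f_2)(r^*)\leq 0$ with $v(r^*)\geq\log\frac{1}{2}$ forces $e^{u(r^*)}+\tfrac{1}{2}e^{v(r^*)}\geq\tfrac{1}{2}$) shows this derivative is strictly positive. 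This contradicts the sign structure of $(f_1-2f_2)$ just before $r^*$ obtained from the integral formula $r^*v_r(r^*)=s_2 v_r(s_2)+\int_{s_2}^{r^*}s(f_1-2f_2)\,ds$.

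The hard part will be this last extension step. The case $r^*>s_2$ proper can be handled by the Lemma \ref{co1}-type sign analysis, but one must also rule out that $v$ can touch $\log\frac{1}{2}$ at $s_2$ itself, where the equality $u(s_2)=v(s_2)$ only gives $(f_1-2f_2)(s_2)=-f_1(s_2)<0$. The bookkeeping is more intricate than in Lemma \ref{co1} because the left endpoint $s_2$ is not in general a local extremum of $v$, so the standard $S$-profile framework must be adapted to an endpoint carrying the condition $(v+2u)_r(s_2)<0$ in place of $v_r(s_2)=0$, and the bound $u(s_2)<\log\frac{1}{2}$ must be extracted from the monotonicity of $u+2v$ on $(s_1,s_2]$ together with the prior lemmas.
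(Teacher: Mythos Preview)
Your Step 1 (transport of the inequalities from $s_1$ to $s_2$, yielding $u_r(s_2)<0$ and $(v+2u)_r<0$ on $(s_2,\infty)$) is correct and coincides with the paper's Step 1. Invoking Lemma \ref{lemma7}(1) with the roles of $u$ and $v$ swapped to obtain the bound on a tail $(R_1,\infty)$ is also legitimate. The gap is in the backward extension from $(R_1,\infty)$ to $[s_2,\infty)$. At a local maximum $r^*$ of $v$ you only know $(f_1-2f_2)(r^*)\leq 0$; showing that $\frac{d}{dr}(f_1-2f_2)(r^*)>0$ does not contradict the integral identity $r^*v_r(r^*)=s_2v_r(s_2)+\int_{s_2}^{r^*}s(f_1-2f_2)\,ds$, which only constrains the \emph{average} sign of $f_1-2f_2$ on $(s_2,r^*)$, not its sign just before $r^*$. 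The contradiction mechanism in Lemma \ref{co1} works because an $S$-profile supplies a second critical point at which $f_2-2f_1\geq 0$, forcing a zero of $f_2-2f_1$ in between; with only one critical point you do not get such a zero, and the argument collapses. You also correctly flag, but do not prove, the bound $u(s_2)=v(s_2)<\log\frac12$; this is not a consequence of the monotonicity of $u+2v$ alone.

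The paper avoids the backward-extension problem entirely. It first proves $u(s_2)=v(s_2)<\log\frac12$ directly (its Step 2): if $u$ has an $S$-profile on $(s_1,s_2)$ this follows from Lemma \ref{co1}(1); otherwise $u$ is eventually decreasing from some $\overline{s}\in[s_1,s_2)$ with $(u-v)_r(\overline{s})\geq 0$, and assuming $u(s_2)\geq\log\frac12$ one gets $u>\log\frac12$ on $(\overline{s},s_2)$, hence $f_2-f_1>0$ there, hence $(u-v)_r>0$ up to $s_2$, contradicting $u(s_2)=v(s_2)$. With $v(s_2)<\log\frac12$ in hand, the paper then treats each putative local maximum of $v$ on $(s_2,\infty)$ via a genuine $S$-profile: if $v_r(s_2)\leq 0$ the $S$-profile lies inside $(s_2,\infty)$ and Lemma \ref{co1}(1) applies; if $v_r(s_2)>0$ one uses $v_r(s_1)\leq 0$ (which follows from $3v_r(s_1)\leq(u+2v)_r(s_1)\leq 0$) to locate $\alpha\in[s_1,s_2)$ with $v_r(\alpha)=0$, producing an $S_{[\alpha,\beta]}$-profile that straddles $s_2$, to which Lemma \ref{ll}(1) applies. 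The remaining case that $v$ is eventually monotone is disposed of by Lemma \ref{remark1}. In short, the missing ingredient in your plan is precisely the two-critical-point structure that Lemmas \ref{co1} and \ref{ll} require; the paper supplies it by a case split on the sign of $v_r(s_2)$ and, in the bad case, by reaching back to $[s_1,s_2)$ for the left critical point.
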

\begin{proof}
{\it Step 1. }{\it We show $u(r)$ and $v(r)$ satisfy the nsi-condition on $(s_1,\infty]$, and thus   $u_r(s_2)<0$.}\par
Since we suppose $(u+2v)_r(s_1)\leq0$ and $v(r)<u(r)$ on $(s_1,s_2)$,
then \begin{equation}\label{eq0250}
r(u+2v)_r(r)=s_1(u+2v)_r(s_1)-3\int_{s_1}^rsf_2(s)ds<0\mbox{    for   }r\in(s_1,s_2].
\end{equation}
Thus,  $u_r(s_2)<0$.
By this, $(u+2v)_r(s_2)<0$, and $u(r)<v(r)$ on $(s_2,\infty)$, we get
\begin{equation}\label{eq0360}
r(2u+v)_r(r)=s_2(2u+v)_r(s_2)-3\int_{s_2}^rsf_1(s)ds<0\mbox{  for   }r\in[s_2,\infty).
\end{equation}
\\
{\it Step 2. } {\it We  show that $u(s_2)=v(s_2)<\log\frac{1}{2}$.}\par
By \eqref{eq0250}, $u_r(s_2)<0$ and Lemma \ref{co1}, we know that if $u(r)$ has at least one $S$-profile on $(s_1,s_2)$,
 then $u(s_2)$ must be less than $\log\frac{1}{2}$.
Thus,  we need only consider the following cases:\\
\begin{itemize}
\item[(1)]  $u(r)$ is decreasing on $(s_1,s_2)$.
\item[(2)]  $u(r)$ is increasing on $(s_1,r_1)$ and $u(r)$ is decreasing on $(r_1,s_2)$ for some $r_1\in(s_1,s_2)$.  (By \eqref{eq0250},  $v_r(r_1)<0$.)
\end{itemize}
Let $\overline{s}=s_1$ if the first case holds, and $\overline{s}=r_1$ if the second case holds.
Now, assume $u(s_2)=v(s_2)\geq\log \frac{1}{2}$. Then,
$$r(u-v)_r(r)=\overline{s}(u-v)_r(\overline{s})+3\int_{\overline{s}}^r s(f_2-f_1)ds>0\mbox{   on  }(\overline{s},s_2], $$
because $u(r)>v(r)$ on $(\overline{s},s_2)$   and   $u(r)>u(s_2)\geq \log\frac{1}{2}. $
Obviously, it is a contradiction. Therefore, we proved  $u(s_2)=v(s_2)<\log\frac{1}{2}$.\\
\\
{\it Step 3. } {\it We show that $v(r)<\log\frac{1}{2}$ on $[s_2,\infty)$ if $v_r(s_2)\leq 0$.}\par
By $v_r(s_2)\leq 0$, if $v(r)$ attains local maximum at $\beta\in (s_2,\infty)$, then
$v(r)$ has an $S_{[\alpha,\beta]}$-profile where $[\alpha,\beta]\subset[s_2,\infty)$.
Hence, by \eqref{eq0360} and applying $v$ to Lemma \ref{co1} (1),
the local maximum values of $v(r)$  are less than $\log\frac{1}{2}$.
Thus, we only need to  consider the case that $v(r)$ is increasing  on some interval $(R_0,\infty)\subset[s_2,\infty)$.
By \eqref{eq0360}, $u_r(r)<0$ on $(R_0,\infty)$. Lemma \ref{remark1} suggests
$$v(r)\mbox{ is increasing to   }\log\frac{1}{2}\mbox{  and  } u(r)\mbox{  is  decreasing to }-\infty\mbox{  as }r\mbox{  tends to }\infty,$$
i.e., $$v(r)<\log\frac{1}{2}\mbox{   on   }[R_0,\infty).$$
By this and $u(s_2)=v(s_2)<\log\frac{1}{2}$, we have
$$u(r)\leq v(r)<\log\frac{1}{2}\mbox{   on   }[s_2,\infty)\mbox{   provided   }v_r(s_2)\leq 0.$$\\

{\it Step 4. } {\it We show that $v(r)<\log\frac{1}{2}$ on $[s_2,\infty]$ if $v_r(s_2)> 0$.}\par
We need  to consider the following possible cases:
\begin{itemize}
\item[(1)] $v_r(r)>0$ on $[s_2,\infty]$.
\item[(2)] There is $\beta\in(s_2,\infty)$ so that $v(r)$ attains  local maximum at $\beta$ and $v_r(r)\geq 0$ on $[s_2,\beta]$.
\end{itemize}
For the first case, $u_r(r)<0$ on $[s_2,\infty]$ by \eqref{eq0360}. Thus, Lemma \ref{remark1} suggests that
$$\lim_{r\to\infty}v(r)=\log\frac{1}{2}\mbox{  and  }\lim_{r\to\infty}u(r)=-\infty.$$
For the second case, the difference between step 3 and step 4 is that, with $v_r(s_2)>0$,  $v(r)$ attains its first local maximum at $\beta\in(s_2,\infty)$ cannot guarantee
$v(r)$ has an  $S$-profile on $(s_2,\infty)$. However, since $v_r(s_1)\leq 0$ and $v_r(s_2)>0$,  there exits $\alpha\in[s_1,s_2)$ with
 $$v_r(\alpha)=0\mbox{   and   }v_r(r)>0\mbox{  on   }(\alpha,s_2],   $$
which implies
$v(r)$ has an $S_{[\alpha,\beta]}$-profile. By \eqref{eq0360}  and  Lemma \ref{ll} (1),
$$v(\beta)<\log\frac{1}{2}.$$
By this, $v_r(\beta)=0>u_r(\beta)$ and Lemma \ref{lemma7},  we have
$$v(r)<\log\frac{1}{2}\mbox{   on  } [\beta,\infty).$$\\
We conclude  that $u(r)< v(r)<\log\frac{1}{2}\mbox{   on  } [s_2,\infty).$
\end{proof}

\begin{lemma}\label{lemma8}
Let $0\leq s_1<s_2<s_3$.  Assume that   $s_2$ and $s_3$ are consecutive intersection points of $u(r)$ and $v(r)$, with  $u(r)>v(r)$ on $(s_1,s_2)$ and $u(r)<v(r)$ on $(s_2,s_3)$.
\begin{itemize}
\item[(1)] Suppose  that $v_r(s_1)\leq u_r(s_1)$  and $(u+2v)_r(s_1)\leq 0$.
Then  $$u(r) \mbox{  and  }v(r)\mbox{  are  less than   }  \log\frac{1}{2}\mbox{   for   }r\in[s_2, \infty).$$

\item[(2)]  Suppose that  $u(s_1)=v(s_1)$,  $(u+v)_r(r)>0$ on $[s_1,s_3]$,         and $u(s_3)=v(s_3)<\log\frac{1}{2}$.  Then
$$u(r)<\log\frac{1}{2}\mbox{   for   }r\in[s_1,s_2].$$
\end{itemize}
\end{lemma}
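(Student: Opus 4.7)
The strategy is to extend the argument of Lemma \ref{lemma88} across the extra intersection point $s_3$ by running an analogous analysis in three consecutive stages: $[s_1,s_2]$, $[s_2,s_3]$, and $[s_3,\infty)$.

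On $[s_1,s_2]$ we have $u>v$, hence $f_2>0$, and
$$r(u+2v)_r(r) = s_1(u+2v)_r(s_1) - 3\int_{s_1}^{r} s\,f_2(s)\,ds < 0$$
on $(s_1,s_2]$, exactly as in Step 1 of Lemma \ref{lemma88}. Because $v>u$ just past $s_2$, we have $v_r(s_2)\geq u_r(s_2)$, and combining this with $u_r(s_2)+2v_r(s_2)<0$ yields $u_r(s_2)<0$. Repeating Step 2 of Lemma \ref{lemma88} then gives $u(s_2)=v(s_2)<\log\tfrac{1}{2}$. On $[s_2,s_3]$ the roles of $u$ and $v$ swap: $v>u$ forces $f_1>0$ and $(2u+v)_r<0$. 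A mirror-image of Steps 2--4 of Lemma \ref{lemma88} then produces $v<\log\tfrac{1}{2}$ on $[s_2,s_3]$ and $u(s_3)=v(s_3)<\log\tfrac{1}{2}$; moreover $u_r(s_3)\geq v_r(s_3)$ (since $v-u$ is positive on the left of $s_3$) together with $2u_r(s_3)+v_r(s_3)<0$ forces $v_r(s_3)<0$, and then $u_r(s_3)<|v_r(s_3)|/2$ gives $(u+2v)_r(s_3)<0$.

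On $[s_3,\infty)$ the inequality $u>v$ resumes (at least locally), and the derivative conditions just verified at $s_3$ are precisely those required by either Lemma \ref{lemma88} or Lemma \ref{lemma7}(1) with base point $s_3$. If $u>v$ persists all the way to infinity one applies Lemma \ref{lemma7}(1) and couples the terminal estimate with the nsi-control on $[s_3,R_1]$; if another intersection $s_4>s_3$ appears one iterates the present lemma. The main obstacle is checking that the constant across iterations stays consistent—this is where the persistent nsi-condition generated at each intersection is essential—and that Lemma \ref{lemma7}(1) can be upgraded from an estimate on $(R_1,\infty)$ to one on $[s_3,\infty)$ in our setting, which is reasonable since the nsi-condition controls the shape of $u$ between $s_3$ and $R_1$.

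\textbf{Plan for Part (2).} Argue by contradiction. From $u(s_1)=v(s_1)$, $u(s_3)=v(s_3)<\log\tfrac{1}{2}$ and $(u+v)_r>0$ on $[s_1,s_3]$, one checks that $u(s_1)$ and $u(s_2)=v(s_2)$ are both less than $\log\tfrac{1}{2}$, so a violation can only occur at an interior local maximum. Choose $r^{*}\in(s_1,s_2)$ to be the \emph{last} critical point of $u$ before $s_2$, and suppose $u(r^{*})\geq\log\tfrac{1}{2}$. By construction $u$ is strictly decreasing on $(r^{*},s_2)$, and the condition $(u+v)_r>0$ then yields $v_r>0$ on $(r^{*},s_2)$; in particular $v_r(r^{*})>0$ and $v$ is strictly increasing there. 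Let $r^{**}$ be the next critical point of $u$ after $r^{*}$. Since $u$ is monotone decreasing on $(r^{*},s_2)$ and $u(s_2)<\log\tfrac{1}{2}\leq u(r^{*})$, we must have $r^{**}\geq s_2$, so $u$ has a reversive $S_{[r^{*},r^{**}]}$-profile containing $s_2$.

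If $s_2\in(r^{*},r^{**})$, the intersection $u(s_2)=v(s_2)<\log\tfrac{1}{2}$ together with the monotonicity of $v$ on $(r^{*},s_2)$ puts us in the hypotheses of Lemma \ref{ll}(2), which gives $u(r^{*})<\log\tfrac{1}{2}$—a contradiction. The boundary case $s_2=r^{**}$ is handled by Lemma \ref{co1}(2), using that $v_r(s_2)>0$ (which follows from $u_r(s_2)\leq 0$ combined with $(u+v)_r(s_2)>0$) and $v<u$ on $(r^{*},s_2)$. The main obstacle is the monotonicity of $v$ on $(r^{*},s_2)$, but this drops out directly from the nsd-condition once we know $u$ is strictly decreasing there; the existence of $r^{**}\geq s_2$ then follows from the choice of $r^{*}$ as the last local maximum before $s_2$.
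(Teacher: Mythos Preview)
Your strategy matches the paper's. The one place where you diverge is the case when $u>v$ persists on all of $(s_3,\infty)$: you invoke Lemma~\ref{lemma7}(1) at base point $s_3$, which only yields the bound on some tail $(R_1,\infty)$, and then appeal vaguely to ``nsi-control on $[s_3,R_1]$'' to fill the gap. That upgrade is not obvious (if $u_r(s_3)>0$, the first local maximum of $u$ after $s_3$ need not come with a full $S$-profile to which Lemma~\ref{co1}(1) applies). The paper avoids this by applying Lemma~\ref{lemma88} with base point $s_2$ and the roles of $u,v$ swapped: the hypotheses there are precisely $(v+2u)_r(s_2)<0$ and $u_r(s_2)<v_r(s_2)$, both of which you have already established, and the conclusion is $u,v<\log\frac{1}{2}$ on all of $[s_3,\infty)$ directly.

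\textbf{Part (2).} There is a genuine gap. Your contradiction hypothesis is that \emph{some} interior local maximum of $u$ on $(s_1,s_2)$ reaches $\log\frac{1}{2}$, but you then pick $r^{*}$ to be the \emph{last} critical point before $s_2$ and assume $u(r^{*})\geq\log\frac{1}{2}$. These are not the same: the violating local maximum could sit strictly to the left of $r^{*}$, with $u(r^{*})<\log\frac{1}{2}$ (or $r^{*}$ a local minimum), and then your argument never engages. The paper handles this in two stages. First, exactly as you do, the rightmost local maximum $r_3$ is bounded via the reversive $S_{[r_3,r_4]}$-profile crossing $s_2$ and Lemma~\ref{ll}(2). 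Second, any \emph{earlier} local maximum on $(s_1,r_3)$ lies inside a reversive $S$-profile entirely contained in $(s_1,s_2)$; since $v<u$ there and the nsd-condition forces $v_r>0$ at the right endpoint of that profile, Lemma~\ref{co1}(2) bounds that local maximum by $\log\frac{1}{2}$ as well. You need this second step to finish.
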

\begin{proof}
(1) {\it Step 1. }{\it We show $u(r)$ and $v(r)$ satisfy the nsi-condition on $[s_1,s_3]$, and thus   $u_r(s_2)<0$ and $v_r(s_3)<0$.}\\
{\it Step 2. } {\it We  show that $u(s_2)=v(s_2)<\log\frac{1}{2}$.}\par
We omit the proofs of the steps 1 and 2, because they have exactly the same structure as in the steps 1 and 2 of Lemma \ref{lemma88}.\\
{\it Step 3. } {\it We show that $v(r)<\log\frac{1}{2}$ on $[s_2,s_3]$ if $v_r(s_2)\leq 0$.}\par
Since $v_r(s_2)\leq 0$ and $v_r(s_3)<0$, then either $v$ is decreasing on $(s_2,s_3)$, or
$v$ attains local maximum on $(s_2,s_3)$. The first case implies $v(r)<\log\frac{1}{2}$ on $[s_2,s_3]$.
For the second case, by $v_r(s_2)\leq 0$ and $v_r(s_3)<0$, $v(r)$ attains local maximum at $a\in(r_2,r_3)$ implies
$v(r)$ has an $S_{[b,a]}$-profile on $(r_2,r_3)$. By the step 1 and Lemma \ref{co1}(1),
the local maximum values of $v(r)$ on $[s_2,s_3]$   are less than $\log\frac{1}{2}$. So, we proved $v(r)<\log\frac{1}{2}$ on $[s_2,s_3]$ provided $v_r(s_2)\leq 0$.\\
\\
{\it Step 4. }{\it We show $v(r)<\log\frac{1}{2}$ on $[s_2,s_3]$ if $v_r(s_2)>0$.}\par
The difference between step 3 and step 4 is that, with $v_r(s_2)>0$, $u(r)$ attains its first local maximum on $(s_2,s_3)$ cannot guarantee
$v(r)$ has an  $S$-profile on $(s_2,s_3)$.  Since $v_r(s_1)\leq 0$ and $v_r(s_2)>0$,  there exits $r_1\in(s_1,s_2)$ with
 $$v_r(r_1)=0\mbox{   and   }v_r(r)>0\mbox{  on   }(r_1,s_2].   $$
Let $r_2$ be the first local maximum point of $v$ on $(s_2,s_3)$ which implies $v(r)$ has an $S_{[r_1,r_2]}$-profile. By the step 1  and  Lemma \ref{co1} (1),
$$v(r_2)<\log\frac{1}{2}.$$
Since $v_r(r_2)=0$ and $v_r(s_3)<0$, a maximum point of $v$ on $(r_2,s_3)$ comes with an $S$-profile of $v$ on $(r_2,s_3)$.
Hence, the maximum values of $v(r)$ on $(r_2,s_3)$ are less than $\log\frac{1}{2}$.
As in the step 3, we have
$$v(r)<\log\frac{1}{2}\mbox{   on  } [r_2,r_3].$$\\
{\it Step 5. }{\it  Assume $u(r)$ and $v(r)$ have infinitely many intersection points after $s_3$, say $\{s_i\}_{i=4}^{\infty}$, with $s_i<s_{i+1}$. We  show that  $u(r)$ and $v(r)$ are less than $\log\frac{1}{2}$ on $[s_3,\infty)$.}\par
Note that $u(r)>v(r)$ on $(s_3,s_4)$. By  the step 1 (see the step 1 of Lemma \ref{lemma88}),  we have
\begin{equation}\label{eq091}
(v+2u)_r(s_2)<0\mbox{    and     }u_r(s_2)<v_r(s_2).
\end{equation}
Applying the steps 3 and 4 to $v(r)$ and $u(r)$  on $[s_3,s_4]$, we have
$$u(r)<\log\frac{1}{2}\mbox{   on    }[s_3,s_4].$$
Hence, by repeating above argument, we have
$$u(r)<\log\frac{1}{2}\mbox{   and   }v(r)<\log\frac{1}{2}\mbox{   on   } [s_3,\infty). $$\\
{\it Step 6.} {\it Assume $u(r)$ and $v(r)$ have no intersection point  after $s_3$. }\par
By \eqref{eq091} and Lemma \ref{lemma88}, we have
$$v(r)<u(r)<\log\frac{1}{2}\mbox{   on }[s_3,\infty). $$\\
{\it Step 7. } {\it  Finally, we show that $u(r)$ and $v(r)$ are  less than $\log\frac{1}{2}$ after $s_3$ if $u(r)$ and $v(r)$ have finite intersection points after $s_3$, say $\{s_i\}_{i=4}^n$ with $s_i<s_{i+1}$. Here, we  assume that $u(r)<v(r)$ on $(s_{n-1},s_{n})$ and $v(r)<u(r)$ on $(s_n,\infty)$. }\par
It is clear by the argument in the step  5 that  $u(r)$ an $v(r)$ are less than $\log\frac{1}{2}$ on $[s_3,s_n]$. Also,
one can easily verify
\begin{equation*}
(v+2u)_r(s_{n-1})<0\mbox{   and         }u_r(s_{n-1})<v_r(s_{n-1}).
\end{equation*}
By this and  Lemma \ref{lemma88}, $u(r)$ is less than $\log\frac{1}{2}$ on $[s_n,\infty)$.\\
\\
(2)
 Since we suppose $(u+v)_r(r)>0$ on $[s_1,s_3]$ and $u(s_3)=v(s_3)<\log\frac{1}{2}$, we have
$$u(s_i)=v(s_i)<\log\frac{1}{2},\,\,i=1,2,$$  and
$$u_r(s_1)>0,\,\,\,v_r(s_2)>0,\,\, \mbox{ and }u_r(s_3)>0.$$ \\
{\it Step 1. } {\it We show $u(r)<\log\frac{1}{2}$ on $[s_1,s_2]$ if $u_r(s_2)\geq 0$.}\par
Since $u_r(s_1)>0$ and $u_r(s_2)\geq 0$, then either $u_r(r)$ in increasing  on $(s_1,s_2)$, or $u$ attains local maximum on $(s_1,s_2)$.
The first case implies $u(r)<\log\frac{1}{2}$ on $[s_1,s_2]$ because  $u(s_2)<\log\frac{1}{2}$. For the second case,with $u_r(s_1)>0$ and $u_r(s_2)\geq 0$, $u(r)$ attains local maximum on $(s_1,s_2]$ implies $u$ has a reversive $S$-profile on $(s_1,s_2]$. By $(u+v)_r(r)>0$ on $[s_1,s_3]$  and Lemma \ref{co1}, the maximum values of $u(r)$ on $[s_1,s_3]$ are less than $\log\frac{1}{2}$.    Thus,
$$u<\log\frac{1}{2}\mbox{  on  }[s_1,s_2]\mbox{  if  } u_r(s_2)\geq 0.$$\\
\\
{\it Step 2. }{\it  We show that $u<\log\frac{1}{2}$ on $[s_1,s_2]$ if $u_r(s_2)<0$.}\par
Since we suppose $u_r(s_2)<0$ and  $u_r(s_1)>0$, $u(r)$ attains local maximum on $(s_1,s_2)$. Unlike the step 1, $u(r)$
attains local maximum on $(s_1,s_2)$ does not necessarily imply $u(r)$ has reversive $S$-profile on $(s_1,s_2)$.
Let $u$  attain local maximum at $r_3\in(s_1,s_2)$ and $u_r\leq 0$ on $(r_3,s_2]$.
Since we suppose $u_r(s_2)<0$ and  $u_r(s_3)>0$, there exists $r_4\in(s_2,s_3)$ such that
$$u_r(r)<0\mbox{   on    }[s_2,r_4)\mbox{   and   }u_r(r_4)=0,$$
which implies $u(r)$ has a reversive
$S_{[r_3,r_4]}$-profile. By $(u+v)_r(r)>0$ on $[s_1,s_3]$  and Lemma \ref{ll},
$$u(r_3)<\log\frac{1}{2}.$$
With $u_r(s_1)>0$ and $u_r(r_3)=0$, $u(r)$ attains local maximum on $(s_1,r_3)$ implies that $u(r)$ has a reversive $S$-profile on $(s_1,r_3)$.
By $(u+v)_r(r)>0$ on $[s_1,s_3]$ and Lemma \ref{ll} (2),
the local maximum values of $u(r)$ on $[s_1,s_3]$ are less than $\log\frac{1}{2}$.
It follows that
$$u<\log\frac{1}{2}\mbox{  on  }[s_1,s_2].$$
\end{proof}

\section{PROOF OF THEOREM \ref{thm1}}
Combing Lemma \ref{lemma7}, Lemma \ref{lemma88} and Lemma \ref{lemma8}(1), we have the following lemma which shows if
\begin{equation}\label{condition1}
v(r_1)\leq u(r_1),\,\, v_r(r_1)\leq u_r(r_1)\mbox{   and   }(u+2v)_r(r_1)\leq 0,
\end{equation}
then the behavior of $u(r)$ and $v(r)$  must be case (1) described in the statement of Theorem \ref{thm1}.\\
\begin{lemma}\label{lemma888}
Suppose $u(r)$ and $v(r)$ satisfy the condition \eqref{condition1}.  Then
 $u(r)$ and $v(r)$ are less than $\log\frac{1}{2}$ on some interval $(R_1,\infty)\subseteq (r_0,\infty)$.
\end{lemma}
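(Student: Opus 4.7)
The plan is to split into cases according to the intersection structure of $u$ and $v$ on $(r_1,\infty)$ and apply exactly one of Lemma \ref{lemma7}(1), Lemma \ref{lemma88}, or Lemma \ref{lemma8}(1) in each case. Setting aside the trivial solution $u\equiv v\equiv 0$ (which is ruled out whenever $(u,v)$ is not identically zero by Theorem \ref{thm2}), one may assume $u\not\equiv v$: indeed, equality $u(r_1)=v(r_1)$ together with $u_r(r_1)=v_r(r_1)$ would force $u\equiv v$ by uniqueness, reducing \eqref{QWE} to the single equation \eqref{e014}, whose nontrivial topological solutions have $ru_r>0$ everywhere and hence violate the condition $(u+2v)_r(r_1)=3u_r(r_1)\leq 0$; the non-topological branch $u=v\to-\infty$ satisfies the conclusion trivially.

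Once $u\not\equiv v$ is assumed, at least one of the inequalities $v(r_1)\leq u(r_1)$ or $v_r(r_1)\leq u_r(r_1)$ in \eqref{condition1} is strict, so $u>v$ on a right neighborhood $(r_1,r_1+\varepsilon)$. I would then split into three exhaustive subcases according to the intersections of $u$ and $v$ in $(r_1,\infty)$. In the first subcase, $u>v$ on all of $(r_1,\infty)$, and Lemma \ref{lemma7}(1) applies verbatim with $r_0=r_1$, since the remaining hypotheses $u_r(r_1)\geq v_r(r_1)$ and $(u+2v)_r(r_1)\leq 0$ are exactly the content of \eqref{condition1}. In the second subcase, $u$ and $v$ meet at a single point $s_2>r_1$ with $v>u$ on $(s_2,\infty)$, and Lemma \ref{lemma88} with $s_1=r_1$ yields $u(r),v(r)<\log\frac{1}{2}$ on $[s_2,\infty)$. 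In the third subcase, $u$ and $v$ meet at least twice after $r_1$; letting $s_2<s_3$ be the first two consecutive intersections, Lemma \ref{lemma8}(1) with $s_1=r_1$ delivers the same conclusion on $[s_2,\infty)$.

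In each case one obtains a tail $(R_1,\infty)\subseteq(r_0,\infty)$ on which both components lie strictly below $\log\frac{1}{2}$, which is the desired conclusion. There is no substantive obstacle beyond confirming that the hypotheses $v_r(s_1)\leq u_r(s_1)$ and $(u+2v)_r(s_1)\leq 0$ demanded by Lemma \ref{lemma88} and Lemma \ref{lemma8}(1) are precisely the content of \eqref{condition1} on setting $s_1=r_1$, and that the three subcases above exhaust the possibilities; both points are immediate. The real content of Lemma \ref{lemma888} is simply the observation that \eqref{condition1} is the canonical initial configuration from which the three preceding propagation lemmas were engineered to drive the bound $\log\frac{1}{2}$ all the way to infinity.
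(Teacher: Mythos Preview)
Your proposal is correct and follows exactly the route the paper intends: the paper itself offers no detailed proof, stating only that Lemma~\ref{lemma888} is obtained by ``combining Lemma~\ref{lemma7}, Lemma~\ref{lemma88} and Lemma~\ref{lemma8}(1),'' and your case split (no intersection / exactly one intersection / at least two intersections after $r_1$) is precisely the way these three lemmas partition the situation. Your preliminary remark disposing of the degenerate $u\equiv v$ branch is a harmless clarification that the paper leaves implicit.
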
\par
Now, we are ready to prove Theorem \ref{thm1}.\\
{\bf Proof of Theorem \ref{thm1}}\\
{\it Step 1.}. By the equation
\begin{equation}
r(u+v)_r(r)=2(N_1+N_2)-\int_{0}^r s(f_1+f_2)ds,
\end{equation}
we know that
$$\mbox{
either there is  }r_1\mbox{   such that }r_1(u+v)_r(r_1)=0\mbox{  and  }(u+v)_r>0\mbox{  on }[0,r_1),$$
  or
  $$r(u+v)_r(r)> 0\mbox{   for   }r\in(0,\infty).$$
For the first case, there are three possibilities on the derivative of $(u,v)$ at $r_1$. (Here, we assume that $u(r)>v(r)$ on some interval $(r_1,r^*_1)$ ):
\begin{itemize}
\item[(A)] $u_r(r_1)=v_r(r_1)=0$.
\item[(B)] $u_r(r_1)=-v_r(r_1)>0$.
\item[(C)]  $u_r(r_1)=-v_r(r_1)<0$.
\end{itemize}
{\it Step 2.} { We discuss the cases (A) and (B).}\par
For  the cases (A) and (B), it is obvious that
$$ u_r(r_1)\geq 0\geq v_r(r_1), (u+2v)_r(r_1)\leq 0,$$
which satisfies \eqref{condition1}.
Hence, by Lemma \ref{lemma888}, the behaviors of $u(r)$ and $v(r)$ after $r_1$  must be case (1)   described in the statement of this theorem.\\
\\
{\it Step 3.}  We discuss the case (C).\par
Unlike the cases (A) and (B), the case (C) does not have  the condition \eqref{condition1} at $r_1$. We will show $u$ and $v$ satisfy \eqref{condition1} at some point on $(r_1,\infty)$.\par
We define
\begin{equation}\label{s1}
\mathbf{s_1}=sup\{s>r_1|\, u_r<0\mbox{  on   }(r_1,s)\}
\end{equation}
 and  \begin{equation}\label{s2}
\mathbf{s_2}=sup\{s>r_1|\, v_r>0\mbox{  on   }(r_1,s)\}.
\end{equation}
We will use these definitions  throughout the step 3.\\
{\it Step 3.1 }  {\it We consider the case that $(u,v)$ has no intersection point after $r_1$.}\\
{\it  Step 3.1.1. } {\it We  show that   $\mathbf{s_2}\leq \mathbf{s_1}$.}\par
Since $(u+v)_r(r)>0$ on $(0,r_1)$ and $u_r(r_1)<0$, then there exits $r_2\in(0,r_1)$ such that
$$u_r(r_2)=0\mbox{   and    }u_r(r)<0\mbox{  on    }(r_2,r_1].$$
Hence,   $(f_2-2f_1)(r_2)=u_{rr}(r_2)\leq 0$.  By $(u+v)_r(r)>0$ on $(0,r_1)$ and  $u_r(r)<0$ on $(r_2,r_1]$, we have  $v_r(r)>0 $  on $[r_2,r_1]$.
It follows that
$u(r)>v(r)$ on $[r_2,r_1]$ and thus
 \begin{equation}\label{eq002}f_1(r_2)\geq \frac{1}{2}f_2(r_2)>0.
 \end{equation}
Suppose for the sake of contradiction  that  $\mathbf{s_2}> \mathbf{s_1}$. Then
\begin{equation}\label{eq032}
u_r<0<v_r\mbox{  on   }(r_2,\mathbf{s_1}),\mbox{ and }u_r(\mathbf{s_1})=0<v_r(\mathbf{s_1}).
\end{equation}
Note that $f_1(r_2)=e^{u(r_2)}(1-2e^{u(r_2)}+e^{v(r_2)})>0$.
By this and  \eqref{eq032},
\begin{equation}\label{eq012}
1-2e^{u(r)}+e^{v(r)}>0\mbox{  on  }[r_2,\mathbf{s_1}].
\end{equation}
Consequently,  $f_1(r)=e^{u(r)}(1-2e^{u(r)}+e^{v(r)})>0$ on $[r_2,\mathbf{s_1}]$ and
\begin{equation}\label{eq007}
r(u+v)_r(r)=-\int_{r_1}^r s(f_1+f_2)ds<0\mbox{ on }(r_1,\mathbf{s_1}].
\end{equation}
By this,  $(u+v)_r(\mathbf{s_1})<0$, which
is a contradiction to \eqref{eq032}. Hence, we proved $$\mathbf{s_2}\leq \mathbf{s_1}.$$\\
{\it Step 3.1.2.  }
By Lemma \ref{remark1},  $u(r)$ and $v(r)$ cannot be monotone after $r_1$ which implies   $\mathbf{s_2}$ is finite.
Since  we suppose
$$u(r)>v(r)\mbox{   on  }[r_1,\infty)\mbox{   and   }v_r(\mathbf{s_2})=0\geq u_r(\mathbf{s_2}),$$
\begin{equation}\label{eq006}
r(u+2v)_r(r)=\mathbf{s_2}(u+2v)_r(\mathbf{s_2})-3\int_{\mathbf{s_2}}^rsf_2(s)ds <0\mbox{  for }r\in(\mathbf{s_2},\infty).
\end{equation}\par
If  $\mathbf{s_1}$ is finite, by \eqref{eq006},
$$u_r(\mathbf{s_1})=0\geq v_r(\mathbf{s_1})\mbox{   and   }(u+2v)_r(\mathbf{s_1})\leq 0.$$
Thus, by applying Lemma \ref{lemma888} to $u(r)$ and $v(r)$ at $\mathbf{s_1}$, we proved Theorem \ref{thm1}.\par
If  $\mathbf{s_1}=\infty$, we consider the following possible cases:
\begin{itemize}
\item[(1)] $v_r(r)>u_r(r)$ on $(\mathbf{s_2},\infty)$.
\item[(2)] There exists $r^*\in(\mathbf{s_2},\infty)$ such that $v_r(r^*)\leq u_r(r^*)$.
\end{itemize}
For the second case, Theorem \ref{thm1} is proved by applying Lemma \ref{lemma888}  again.
For the first case, if  $\lim_{r\to \infty}u(r)< \log\frac{1}{2}$, then we know there exists $r^{**}$ such that
$$v(r)<u(r)<\log\frac{1}{2}\,\mbox{   for   }r\in(r^{**},\infty), $$
which implies (1) of Theorem \ref{thm1}.
Therefore, we may assume  $\lim_{r\to \infty}u(r)\geq \log\frac{1}{2}$. Note that $\mathbf{s_1}=\infty$ implies $u_r(r)<0$ on $[r_1,\infty)$.
Hence, $$v_r(r)>u_r(r)\mbox{   on  }(r_1,\infty.)$$
It follows that
$$v(\infty)-v(r_1)>u(\infty)-u(r_1),$$
which implies $v(\infty)>-\infty.$
But it yields a contradiction to Lemma \ref{remark1}.\\
\\
{\it Step 3.2. }  {\it  Suppose $u(r)$ and $v(r)$ have at least one intersection point after $r_1$. Here, we assume  $r_{3}$ is  the first intersection point of $u(r)$ and $v(r)$ after $r_1$.}\par
To obtain Theorem \ref{thm1}, we  want to show that
\begin{equation}\label{condition2}
 u_r(r_3)\leq v_r(r_3)\mbox{   and   } (v+2u)_r(r_3)\leq 0.
\end{equation}
Then Theorem \ref{thm1} follows by applying Lemma \ref{lemma888}. Recall the definition of $\mathbf{s_1}$ and $\mathbf{s_2}$ in \eqref{s1} and \eqref{s2}.
By the step 3.1.1., if $\mathbf{s_1}<r_3$,  then $\mathbf{s_2}\leq \mathbf{s_1}.$
Hence, we consider the following possible cases to verify \eqref{condition2}.
\begin{itemize}
\item[(1)] $\mathbf{s_1},\mathbf{s_2}\in[r_3,\infty)$.
\item[(2)] $\mathbf{s_2}\in(r_1, r_3)$ and $\mathbf{s_1}\in[r_3,\infty)$.
\item[(3)] $ \mathbf{s_2}\leq \mathbf{s_1}\in(r_1,r_3)$.\end{itemize}\par
 The case (1) implies that $u_r(r)<0<v_r(r)$ on $[r_1,r_3)$.
As in \eqref{eq007},
$$(u+v)_r(r)<0  \mbox{   on   } (r_1,r_3],$$
which implies $u_r(r_3)<0$ and thus $(v+2u)_r(r_3)<0$.
Then \eqref{condition2} follows.\par
The case (2) implies
$$u_r(\mathbf{s_2})<0=v_r(\mathbf{s_2}).$$
Consequently,
\begin{equation}\label{eq099}
r(u+2v)_r(r)=\mathbf{s_2}(u+2v)_r(\mathbf{s_2})-3\int_{\mathbf{s_2}}^rsf_2(s)ds<0\mbox{   on  }[\mathbf{s_2},r_3].
\end{equation}
Since $(u-v)_r(r_3)<0$,  we have $u_r(r_3)<v_r(r_3)$ and $(u+2v)_r(r_3)<0$ which imply \eqref{condition2}. \par
For the case (3), we have $u_r(\mathbf{s_2})\leq 0=v_r(\mathbf{s_2})$. Hence, as in \eqref{eq099},
$$(u+2v)_r(r)<0\mbox{   on  }(\mathbf{s_2},r_3],$$
which implies \eqref{condition2} again. \\
\\
{\it Step 4.} We now discuss the case:
\begin{equation}\label{e00}
r(u+v)_r(r)=2(N_1+N_2)-\int_{0}^rs(f_1+f_2)ds>0\text{   on   } (0,\infty)
\end{equation}
{\it Step 4.1. } {\it We claim that if $$u(r_4)=v(r_4)\geq \log\frac{1}{2}, \,\,\,v_r(r_4)<u_r(r_4)\mbox{   and  }(u+v)_r>0\mbox{   on }[r_4,\infty),$$ then   $u(r)$ and $v(r)$ do not intersect  after $r_4$. }\par
Suppose $r_5$ is  the first intersection point of $u(r)$ and $v(r)$ after $r_4$. By $(\ref{e00})$ and  $u(r_4)=v(r_4)\geq\log\frac{1}{2}$,
we have
\begin{equation}
\log\frac{1}{2}\leq \frac{1}{2}(u+v)(r_4)< \frac{1}{2}(u+v)(r)\leq u(r)\mbox{   for  }r\in(r_4,r_5],
\end{equation}
which implies $f_2(r)-f_1(r)>0$ on  $(r_4,r_5)$.
Hence,
\begin{equation}
r(u-v)_r(r)=r_4(u-v)_r(r_4)+3\int_{r_4}^rs(f_2-f_1)ds>0\text{  on } (r_4, r_5].
\end{equation}
Obviously, it is a contradiction.\\
{\it Step 4.2 } We discuss the following possible cases.
\begin{itemize}
\item[(1)] $u(r)$ and $v(r)$ have infinitely many intersection points on $(0,\infty)$.
\item[(2)] $u(r)$ and $v(r)$ have  finite intersection points on  $(0,\infty)$.
\end{itemize}
We will show that only the second case is possible. By this and Lemma \ref{lemma7} (2), if $(u+v)_r(r)>0$ on $(0,\infty)$, then
$(u,v)$ is a topological solution and they have  intersection at most finite times.\\
\\
{\it Step 4.2.1.}
By the step 4.1, if $u(r)$ and $v(r)$ have infinitely many intersection points, say $\{t_i\}_{i=1}^{\infty}$, then $u(t_i)=v(t_i)<\log\frac{1}{2}$, $i=1,\cdots,\infty$.
By Lemma \ref{lemma8} (2), $u$ and $v$ are less than $\log\frac{1}{2}$ on $[t_1,\infty)$.\par
By this and \eqref{e00},
\begin{equation}\label{eq035}
\begin{split}
t_1(u+v)_r(t_1)&\geq\int_{t_1}^{\infty} s(f_1+f_2)(s)ds\\
&\geq 2\int_{t_1}^{\infty}se^{(u+v)(s)}ds\geq 2e^{(u+v)(t_1)}\int_{t_1}^{\infty}sds=+\infty,
\end{split}
\end{equation}
which is a contradiction. Hence, $u(r)$  cannot   intersect $v(r)$ infinitely many times.\\
{\it Step 4.2.2.}
If $u(r)$ and $v(r)$ have finite intersection points, say $\{t_i\}_{i=1}^{n}$.
By applying Lemma \ref{lemma7} (2) to $(u(r),v(r))$ on $[t_n,\infty)$,
$(u(r),v(r))$ is a topological solution.\par

\section{CLASSIFICATION OF RADIAL SOLUTIONS }
In this section, we classify the radial solutions according to their boundary conditions at $\infty$. By Theorem \ref{thm1}, we only need to consider the case (1) solutions.   We want to show that $f_1$ and $f_2$ are in $L^1(\mathbb{R}^2)$. We have the following simple observation.

\begin{lemma}\label{simple}
Suppose $(u,v)$  is a case (1) solution in Theorem \ref{thm1}.\\
(1) Then either $\int_0^{\infty}sf_1(s)ds $ and $\int_0^{\infty}sf_2(s)ds $  both are finite or
$\int_0^{\infty}sf_1(s)ds $ and $\int_0^{\infty}sf_2(s)ds $  both are infinite.\\
(2) If $\int_0^{\infty}sf_1(s)ds=\int_0^{\infty}sf_2(s)ds=\infty$, then $\int_{0}^{\infty}se^{(u+v)(s)}ds<+\infty$.
\end{lemma}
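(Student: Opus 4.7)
My plan is to work directly from the integrated ODE identities. Integrating the two equations of \eqref{QWE} against $r$ yields
\begin{equation*}
ru_r(r)=2N_1+\int_0^r s(f_2-2f_1)(s)\,ds,\qquad rv_r(r)=2N_2+\int_0^r s(f_1-2f_2)(s)\,ds.
\end{equation*}
Before using these, I would note that case (1) of Theorem \ref{thm1} gives $u,v<\log\tfrac12$ on $[R_0,\infty)$, so the factorizations $f_1=e^{u}(1-2e^{u}+e^{v})$ and $f_2=e^{v}(1-2e^{v}+e^{u})$ show $f_1,f_2>0$ on $[R_0,\infty)$. The local integrals $\int_0^{R_0}sf_i\,ds$ are finite thanks to $u,v=2N_i\log r+O(1)$ near the origin, so the whole question is about the tails.

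For part (1), I would argue by contradiction. Suppose, say, $\int_0^\infty sf_1\,ds=\infty$ while $\int_0^\infty sf_2\,ds<\infty$. The second integrated identity above then forces $rv_r(r)\to+\infty$, hence for every $M>0$ one has $v_r(r)\geq M/r$ eventually, and integration yields $v(r)\to+\infty$. This contradicts the a priori bound $v<\log\tfrac12$ from case (1); the symmetric case is identical, giving (1).

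For part (2), summing the two identities gives
\begin{equation*}
r(u+v)_r(r)=2(N_1+N_2)-\int_0^r s(f_1+f_2)(s)\,ds.
\end{equation*}
The hypothesis together with positivity of $f_1+f_2$ on $[R_0,\infty)$ forces $r(u+v)_r(r)\to-\infty$. Hence for every $M$ there is $R_M$ with $(u+v)_r(r)<-M/r$ on $(R_M,\infty)$, and a single integration gives $(u+v)(r)\leq -M\log r+C_M$ for $r$ large. Taking any $M>2$, e.g.\ $M=3$, yields $e^{(u+v)(r)}\leq Cr^{-3}$, so $se^{u+v}$ is integrable at infinity; integrability near the origin is automatic since $se^{(u+v)(s)}\sim s^{1+2(N_1+N_2)}$ there.

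I do not anticipate a real obstacle: the only point requiring any care is the bookkeeping that ensures the positivity of $f_1,f_2$ is in force precisely on the interval where it is used, namely $[R_0,\infty)$, and this follows straight from case (1).
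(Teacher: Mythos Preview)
Your proof is correct. The paper actually states this lemma as a ``simple observation'' and omits the proof entirely; your argument---using the integrated identities for $ru_r$, $rv_r$, and $r(u+v)_r$ together with the eventual positivity of $f_1,f_2$ on $[R_0,\infty)$---is exactly the natural justification the authors leave to the reader.
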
\par
Next, we introduce the  Pohozaev identity  for \eqref{QWE}.
\begin{lemma}(The Pohozaev identity)
Suppose $(u,v)$ solves \eqref{QWE}. Then $(u,v)$ satisfies
\begin{equation}\label{qwe008}
\begin{split}
&r^2(u_r^2(r)+u_r(r)v_r(r)+v_r^2(r))+3r^2(e^{u(r)}-e^{2u(r)}+e^{u(r)+v(r)} +e^{v(r)}-e^{2v(r)})\\
=&6\int_0^rs\Big( e^{u(s)}-e^{2u(s)}+e^{u(s)+v(s)} +e^{v(s)}-e^{2v(s)} \Big)ds+4(N_1^2+N_1N_2+N_2^2)
\end{split}
\end{equation}
for $r>0$.
\end{lemma}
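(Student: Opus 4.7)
The plan is to obtain \eqref{qwe008} as an exact first integral of the ODE system \eqref{QWE}. I will check that the derivative of the left-hand side equals the derivative of the right-hand side, then identify the integration constant from the boundary behavior at $r=0$ dictated by the vortex data $u(r)=2N_1\log r+O(1)$, $v(r)=2N_2\log r+O(1)$.

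\textbf{Step 1 (kinetic side).} Differentiating directly and using the symmetric identity $(2u_r+v_r)u_r+(u_r+2v_r)v_r=2(u_r^2+u_rv_r+v_r^2)$, I compute
\begin{equation*}
\frac{d}{dr}\!\bigl[r^2(u_r^2+u_rv_r+v_r^2)\bigr]
=2r(u_r^2+u_rv_r+v_r^2)+r^2\bigl[(2u_r+v_r)u_{rr}+(u_r+2v_r)v_{rr}\bigr].
\end{equation*}
Substituting $u_{rr}=-u_r/r+(f_2-2f_1)$ and $v_{rr}=-v_r/r+(f_1-2f_2)$ (valid for $r>0$), the $-1/r$ contributions cancel the first summand on the right, leaving
\begin{equation*}
\frac{d}{dr}\!\bigl[r^2(u_r^2+u_rv_r+v_r^2)\bigr]
=r^2\bigl[(2u_r+v_r)(f_2-2f_1)+(u_r+2v_r)(f_1-2f_2)\bigr].
\end{equation*}

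\textbf{Step 2 (potential side).} Expanding the bracket and collecting coefficients of $f_1,f_2$ gives exactly $-3(u_rf_1+v_rf_2)$. Now a term-by-term check using $u_re^u=(e^u)_r$, $2u_re^{2u}=(e^{2u})_r$, and $(u_r+v_r)e^{u+v}=(e^{u+v})_r$ yields
\begin{equation*}
u_rf_1+v_rf_2=\frac{d}{dr}\Bigl(e^u-e^{2u}+e^v-e^{2v}+e^{u+v}\Bigr)=:W_r(r).
\end{equation*}
Combining the two steps produces the pointwise identity
\begin{equation*}
\frac{d}{dr}\!\bigl[r^2(u_r^2+u_rv_r+v_r^2)\bigr]=-3r^2W_r(r),\qquad r>0.
\end{equation*}

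\textbf{Step 3 (integration and boundary terms).} Integrating from $0$ to $r$ and integrating the right-hand side by parts,
\begin{equation*}
\int_0^r(-3s^2W_s)\,ds=-3r^2W(r)+\lim_{s\to 0^+}3s^2W(s)+6\int_0^r sW(s)\,ds.
\end{equation*}
From $u(s)=2N_1\log s+O(1)$ and $v(s)=2N_2\log s+O(1)$ with $N_1,N_2\ge 0$, every exponential appearing in $W$ is bounded by a nonnegative power of $s$, so $s^2W(s)\to 0$. Differentiating the vortex expansion gives $su_s\to 2N_1$ and $sv_s\to 2N_2$ as $s\to 0^+$, hence
\begin{equation*}
\lim_{s\to 0^+}s^2\bigl(u_s^2+u_sv_s+v_s^2\bigr)=4(N_1^2+N_1N_2+N_2^2).
\end{equation*}
Rearranging the resulting identity produces exactly \eqref{qwe008}.

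\textbf{Expected obstacle.} No step is deep; the entire argument is a direct manipulation. The only place that deserves care is the $r\to 0^+$ limit, since $u_r$ and $v_r$ are singular at the origin. This is handled rigorously by writing $u(r)=2N_1\log r+\tilde u(r)$ with $\tilde u\in C^1$ near $0$ (and similarly for $v$), which follows from the local theory for \eqref{QWE} away from the Dirac source, and then differentiating term by term. With this, all boundary terms are determined unambiguously and the identity follows.
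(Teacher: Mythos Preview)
Your proof is correct. The paper states the Pohozaev identity without proof, so there is nothing to compare against; your derivation is the standard one and all the computations check out. In particular, the algebraic identity $(2u_r+v_r)(f_2-2f_1)+(u_r+2v_r)(f_1-2f_2)=-3(u_rf_1+v_rf_2)$ and the recognition that $u_rf_1+v_rf_2$ is an exact derivative are the key points, and both are verified. For the boundary limit at $r=0$ you may streamline the justification by quoting directly the integrated form $ru_r(r)=2N_1+\int_0^r s(f_2-2f_1)\,ds$ (and the analogous one for $v$), which gives $ru_r\to 2N_1$, $rv_r\to 2N_2$ without appealing to differentiability of the remainder $\tilde u$.
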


Using the  Pohozaev identity, we show that $f_1$ and $f_2$ are in $L^1(\mathbb{R}^2)$ when $(u,v)$ is a case (1) solution in Theorem \ref{thm1}.
\begin{lemma}\label{lemma12}
Suppose $(u,v)$  is a case (1) solution in Theorem \ref{thm1}. Then
$$ f_1\mbox{   and   } f_2\mbox{ are  }L^1\mbox{-integrable  in  }\mathbb{R}^2. $$
\end{lemma}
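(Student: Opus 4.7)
The plan is to argue by contradiction using the Pohozaev identity \eqref{qwe008}. Suppose $f_1, f_2 \notin L^1(\mathbb{R}^2)$. By Lemma \ref{simple}(1), both $I_1(r) := \int_0^r s f_1(s)\,ds$ and $I_2(r) := \int_0^r s f_2(s)\,ds$ diverge, so $I(r) := I_1(r) + I_2(r) \to \infty$. By Lemma \ref{simple}(2), $\int_0^\infty s\,e^{u+v}\,ds < \infty$, which forces $(u+v)(r) \to -\infty$ faster than any logarithm.

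The starting point is the algebraic identity
\begin{equation*}
w(r) \;:=\; e^u - e^{2u} + e^{u+v} + e^v - e^{2v} \;=\; \tfrac{1}{2}\bigl(f_1 + f_2 + e^u + e^v\bigr),
\end{equation*}
verified by direct expansion. Setting $E = e^u + e^v$ and $K(r) = \int_0^r sE\,ds$, one obtains $\int_0^r sw\,ds = \tfrac{1}{2}(I+K)$. Substituting into \eqref{qwe008} and using $r(u+v)_r = 2(N_1+N_2) - I(r)$ together with the elementary bound $u_r^2 + u_r v_r + v_r^2 \geq \tfrac{3}{4}(u_r+v_r)^2$, I obtain
\begin{equation*}
\tfrac{3}{4}\bigl(I(r) - 2(N_1+N_2)\bigr)^2 + \tfrac{3r^2}{2}(f_1+f_2)(r) \;\leq\; 3 I(r) + 3 D(r) + 4M,
\end{equation*}
where $D(r) := K(r) - \tfrac{1}{2}r^2 E(r)$ and $M = N_1^2 + N_1 N_2 + N_2^2$.

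The crux of the proof is to show that $D(r)$ remains bounded as $r \to \infty$. Granted this, the right-hand side grows at most linearly in $I$ while the left-hand side is quadratic, forcing $I$ to be bounded and contradicting $I \to \infty$. To control $D(r)$, I use the identity $D'(r) = -\tfrac{1}{2}r^2 E'(r) = -\tfrac{1}{2}r^2\bigl(u' e^u + v' e^v\bigr)$, so it suffices to show that $r^2 E'$ is absolutely integrable. This will follow from the asymptotic monotonicity of $u$ and $v$ at large $r$ (established in Sections 3--5) combined with the super-logarithmic decay of $u+v$ obtained from Lemma \ref{simple}(2).

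The main obstacle lies in the mixed-type regime: when one of $e^u, e^v$ tends to $\tfrac{1}{2}$ and the other tends to $0$, both $K(r)$ and $\tfrac{1}{2}r^2 E(r)$ grow like $r^2$, so the naive bounds give $D = O(r^2)$, which is not sufficient. One must instead track the delicate cancellation between $K$ and $\tfrac{1}{2}r^2 E$, using the precise rate at which the stabilizing component approaches $\log\tfrac{1}{2}$ from below and the rapid decay of the other component. In the non-topological regime the bound on $D$ is more routine: both $e^u$ and $e^v$ decay and $u, v$ are eventually monotone, so integration by parts directly yields $\int_0^\infty r^2 |E'(r)|\,dr < \infty$.
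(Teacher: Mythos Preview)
Your overall strategy---contradiction via the Pohozaev identity, exploiting the quadratic growth of $(r(u+v)_r)^2$ against a right-hand side you hope is subquadratic---matches the paper, and your algebraic reformulation $w=\tfrac12(f_1+f_2+E)$ and $D(r)=K(r)-\tfrac12 r^2E(r)$ is clean. The problem is that the crux of your argument, that $D(r)$ stays bounded, is asserted rather than proved, and the justifications you give do not hold up.

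First, you invoke ``asymptotic monotonicity of $u$ and $v$ at large $r$ (established in Sections~3--5).'' This is not established there. Those sections show only that eventually $u,v<\log\tfrac12$; oscillation is explicitly allowed (see e.g.\ Step~2.1 in the proof of Lemma~\ref{lemma7}(1), where the case ``$u(r)$ oscillates on $(r_0,\infty)$'' is treated, with the conclusion merely that local maxima lie below $\log\tfrac12$). Second, even granting monotonicity, your integration-by-parts bound for $\int r^2|E'|\,dr$ would produce the term $2\int r\,e^{u}\,dr$ (and likewise for $v$), and under the contradiction hypothesis these integrals diverge: as the paper notes in Step~1 of its proof, $\int sf_i\,ds=\infty$ together with Lemma~\ref{simple}(2) forces $\int s e^{u}\,ds=\int s e^{v}\,ds=\infty$. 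So the ``routine'' non-topological-regime bound actually fails. Third, for the mixed-type regime you only describe what must be tracked; no argument is supplied.

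The paper's proof splits into two cases according to whether $\sup_{r>R_0}\max(u,v)$ is strictly below $\log\tfrac12$ or equals $\log\tfrac12$. In the first case there is a uniform gap $\varepsilon>0$, which gives $f_1+f_2\ge 2\varepsilon\,E$ and hence $K\le C\,I$; your inequality then yields the contradiction directly. The delicate case is the second, and the paper does \emph{not} bound $D(r)$ for all large $r$. Instead it selects a sequence $r_n\to\infty$ with $w_1(r_n)=\max_{[R_0,r_n]}w_1\to\log\tfrac12$ and partitions $(R_1,r_n)$ into $\Gamma_1=\{w_1\ge -3\log r\}$ and $\Gamma_2=\{w_1<-3\log r\}$; using $(u+v)\le -6\log r$ to force $w_2\le -3\log r$ on $\Gamma_1$, it shows $\int_0^{r_n}s\,w\,ds\le \tfrac12 r_n^2\bigl(e^{u(r_n)}-e^{2u(r_n)}\bigr)+O(1)$, which cancels exactly against the $3r^2w$ term on the left of Pohozaev and leaves $(r_n(u+v)_r(r_n))^2=O(1)$, contradicting $I(r_n)\to\infty$. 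This sequence-plus-decomposition device is the missing idea in your proposal.
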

\begin{proof}
{\it Step 1. }
By Theorem \ref{thm1}, there exits $R_0>0$ such that
$$u(r)\mbox{   and   }v(r)\mbox{ both are less than    }\log\frac{1}{2}\mbox{     if       }r>R_0, $$
which implies $f_1(r)>0$ and $f_2(r)>0$ if $r>R_0$. Hence, we only need to show that $$ \int_{R_0}^{\infty}sf_1(s)ds<+\infty \mbox{   and      } \int_{R_0}^{\infty}sf_2(s)ds<+\infty.$$
Suppose for the sake of contradiction that
$$ \int_{R_0}^{\infty}sf_1(s)ds=+\infty \mbox{   and      } \int_{R_0}^{\infty}sf_2(s)ds=+\infty.$$
It implies that
\begin{equation}\label{qwe011}
\int_{R_0}^{\infty}se^{u(s)}ds=+\infty \mbox{   and      } \int_{R_0}^{\infty}se^{v(s)}ds=+\infty
\end{equation}
because of Lemma \ref{simple} (2).
\\

{\it Step 2. }  We denote
\begin{equation*}
w_1(r)=\max(u(r),v(r))\mbox{  and } w_2(r)=\min(u(r),v(r)).
\end{equation*}
and consider the following two cases:
\begin{itemize}
\item[(1)] $\sup_{r\in(R_0,\infty)}w_1(r) <\log\frac{1}{2}$.
\item[(2)] $\sup_{r\in(R_0,\infty)}w_1(r)=\log\frac{1}{2}$.
\end{itemize}
{\it Step 3. } case (1): $\sup_{r\in(R_0,\infty)}w_1(r)<\log\frac{1}{2}$.\par
By this, there exists $\varepsilon>0$ such that
\begin{equation}\label{qwe010}
e^{w_1(r)}<\frac{1}{2}-\varepsilon\,\,\,\,\mbox{      for         }r>R_0.
\end{equation}
Thus, for $r>>R_0$,
\begin{equation}
\begin{split}
&\int_0^{r}s(f_1+f_2)(s)ds\\
=&\int_{R_0}^{r}s(e^{u(s)}-2e^{2u(s)} +e^{v(s)}-2e^{2v(s)})ds +O(1)\\
\geq & 2\varepsilon\int_{R_0}^{r}s(e^{u(s)} +e^{v(s)})ds
\end{split}
\end{equation}
where Lemma \ref{simple}(2) and \eqref{qwe010} are used.
Applying the inequality
$$\frac{3}{4}(a+b)^2\leq a^2+ab +b^2 $$
for $(a,b)=(ru_r(r),rv_r(r))$ to the  Pohozaev identity \eqref{qwe008}, we have
\begin{equation}\label{qwe009}
\begin{split}
&\frac{3}{4}( ru_r(r)+rv_r(r))^2+3r^2(e^{u(r)}-e^{2u(r)}+e^{u(r)+v(r)} +e^{v(r)}-e^{2v(r)})\\
\leq &r^2(u_r^2(r)+u_r(r)v_r(r)+v_r^2(r))+3r^2(e^{u(r)}-e^{2u(r)}+e^{u(r)+v(r)} +e^{v(r)}-e^{2v(r)})\\
=&6\int_0^rs\Big( e^{u(s)}-e^{2u(s)}+e^{u(s)+v(s)} +e^{v(s)}-e^{2v(s)} \Big)ds+4(N_1^2+N_1N_2+N_2^2)
\end{split}
\end{equation}
Note that  $ ru_r(r)+rv_r(r)=2(N_1+N_2)-\int_0^rs(f_1+f_2)(s)ds$.
Thus, for $r>>R_0$,
$$( ru_r(r)+rv_r(r))^2 \geq 4\varepsilon^2 \Big(\int_{R_0}^{r}s(e^{u(s)} +e^{v(s)})ds\Big)^2  $$
Therefore, for $r>>R_0$,  \eqref{qwe009} implies
\begin{equation}
3\varepsilon^2 \Big(\int_{R_0}^{r}s(e^{u(s)} +e^{v(s)})ds\Big)^2\leq 6\Big(\int_{R_0}^{r}s(e^{u(s)} +e^{v(s)})ds\Big)+O(1).
\end{equation}
Due to \eqref{qwe011}, it is a contradiction. \\
{\it Step 4. } case (2): $\sup_{r\in(R_0,\infty)}w_1(r)=\log\frac{1}{2}$.\par
Thus, there exists $\{r_n\}_{n=1}^{\infty}\to+\infty$ such that
$$
w_1(r_n)=\sup_{r\in[R_0,r_n]}w_1(r)\mbox{ tends to }\log\frac{1}{2}\mbox{ as }r_n\mbox{ tends to }+\infty.
$$
Without loss of generality, we might assume that
$$w_1(r_n)=u(r_n).$$
In terms of $w_1(r)$ and $w_2(r)$, the right hand side of the Pohozaev identity \eqref{qwe008} can be written as
$$6\int_0^rs\Big( e^{w_1(s)}-e^{2w_1(s)}+e^{w_1(s)+w_2(s)} +e^{w_2(s)}-e^{2w_2(s)} \Big)ds+4(N_1^2+N_1N_2+N_2^2).$$\par
We estimate $\int_0^rs\Big( e^w_1(s)-e^{2w_1(s)}+e^{w_1(s)+w_2(s)} +e^{w_2(s)}-e^{2w_2(s)} \Big)ds$.
Since we suppose
$r(u+v)_r(r)=2(N_1+N_2)-\int_0^rs(f_1+f_2)(s)ds\to-\infty\mbox{  as    }r\to+\infty, $
there exists $R_1\geq R_0$ such that
$$
(u+v)(r)\leq -6\log r\mbox{   if  }r>R_1.
$$
Denote
$$
\Gamma_1=\{ r\in[R_1,\infty)| w_1(r)\geq -3\log r     \}
$$
and
$$
\Gamma_2=\{ r\in[R_1,\infty)| w_1(r)< -3\log r    \}.
$$
Since $(u+v)(r)\leq-6\log r$ on $[R_1,\infty)$, then
\begin{equation}\label{qwe014}
w_2(r)\leq -3\log r\mbox{  on  }\Gamma_1.
\end{equation}
Without loss of generality, we might assume that  $r_1>>R_1$. Thus,
\begin{equation}
\begin{split}
&\int_0^{r_n} s\Big( e^w_1(s)-e^{2w_1(s)}+e^{w_1(s)+w_2(s)} +e^{w_2(s)}-e^{2w_2(s)} \Big)ds\\
=&\int_{R_1}^{r_n} s\Big( e^w_1(s)-e^{2w_1(s)}+e^{w_1(s)+w_2(s)} +e^{w_2(s)}-e^{2w_2(s)} \Big)ds+O(1)\\
=& \int_{(R_1,r_n) \bigcap\Gamma_1 }\Big( e^w_1(s)-e^{2w_1(s)}+e^{w_1(s)+w_2(s)} +e^{w_2(s)}-e^{2w_2(s)} \Big)ds\\
&+ \int_{(R_1,r_n) \bigcap\Gamma_2}\Big( e^w_1(s)-e^{2w_1(s)}+e^{w_1(s)+w_2(s)} +e^{w_2(s)}-e^{2w_2(s)} \Big)ds+O(1).
\end{split}
\end{equation}
Therefore, we have
\begin{equation}\label{qwe012}
\begin{split}
& \int_{(R_1,r_n) \bigcap\Gamma_1}\Big( e^w_1(s)-e^{2w_1(s)}+e^{w_1(s)+w_2(s)} +e^{w_2(s)}-e^{2w_2(s)} \Big)ds\\
\leq &\Big( e^{u(r_n)}-e^{2u(r_n)}\Big)\int_{R_1}^{r_n}s ds +O(1) \\
=& \frac{1}{2}r_n^2\Big( e^{u(r_n)}-e^{2u(r_n)}\Big)+O(1)
\end{split}
\end{equation}
where $w_2(r)\leq -3\log r$ on $(R_1,r_n) \bigcap\Gamma_1$ is used.
Moreover, one can easily see that
\begin{equation}\label{qwe013}
\int_{(R_1,r_n) \bigcap\Gamma_2}\Big( e^{w_1(s)}-e^{2w_1(s)}+e^{w_1(s)+w_2(s)} +e^{w_2(s)}-e^{2w_2(s)} \Big)ds=O(1).
\end{equation}
Combining \eqref{qwe014}, \eqref{qwe012} and \eqref{qwe013}, \eqref{qwe009} implies that when $r=r_n$,  we have
\begin{equation}
\frac{3}{4}r_n^2(u_r(r_n)+v_r(r_n))^2\leq O(1),
\end{equation}
and then $f_1$ and $f_2$ are in $L^1(\mathbb{R}^2)$.
\end{proof}
{\bf Proof of Theorem \ref{cor1}} \\
By Theorem \ref{thm1}, we show that the case (1) solutions must be either non-topological solutions or mixed-type solutions.
By Lemma \ref{lemma12}, the limit of both $ru_r(r)$ and $rv_r(r)$ exist as $r\to\infty$. By integrating the equations \eqref{QAZ},
it is easy to see that  $\lim_{r\to\infty}(u(r),v(r))$ exists and must be one of the following:
$$
(1)\,\,(\log \frac{1}{2},-\infty)  \hspace{1cm}      (2)\,\,(-\infty,\log\frac{1}{2})       \hspace{1cm}   (3)\,\,(-\infty,-\infty).
$$
By this and Theorem \ref{thm1}, Theorem \ref{cor1} follows.\qed

\section{PROOF OF COROLLARY \ref{cor2}}
In this section, it will be shown that, for entire radial solutions of \eqref{QAZ},  $f_1$ and $f_2$ are in $L^1(\mathbb{R}^2)$. Thus,
we  denote these two quantities
$$ -\beta_1=2N_1+\int_0^{\infty}s(f_2-2f_1)(s)ds       $$
and
$$ -\beta_2=2N_2+\int_0^{\infty}s(f_1-2f_2)(s)ds       $$
to characterize the behaviors of solutions at infinity of \eqref{QAZ}.
It is easy to see that $\lim_{r\to\infty}ru_r(r)=-\beta_1$ and  $\lim_{r\to\infty}ru_r(r)=-\beta_1$  by integrating equations \eqref{QAZ}.\\
\\
{\bf Proof of Corollary \ref{cor2}:}\\
\\
The proof of Corollary \ref{cor2} is long; therefore  we split the proof of Corollary \ref{cor2} into a number of lemmas. See Lemmas \ref{lemma81}, \ref{lemma82} and \ref{lemma83}.\par
We first investigate topological solutions of \eqref{QAZ}. Since $(u,v)\to(0,0)$ as $r\to\infty$ and
$$\Delta(u+v)= (u+v)+O(|u+v|^2) \mbox{  for  } u \mbox{  and  } v \mbox{ small}, $$
by the estimate of elliptic PDE (see Sec. 16  in \cite{LPY}), we obtain:
\begin{lemma}\label{lemma81}
Suppose $(u,v)$ is a topological solution. Then
$(u(r),v(r))\to(0,0)$ exponentially as $r\to\infty$.
\end{lemma}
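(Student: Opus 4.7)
The plan is to exploit the fact that $(0,0)$ is a linearly stable equilibrium of the right-hand sides of \eqref{QAZ}. First I would Taylor expand the nonlinearities at $(0,0)$ to obtain
\begin{equation*}
\Delta u = 5u - 4v + Q_1(u,v), \qquad \Delta v = -4u + 5v + Q_2(u,v),
\end{equation*}
with $Q_i(u,v) = O(u^2 + v^2)$ (this is a direct computation: at $(0,0)$ the partial derivatives of the first RHS are $-2+8-1 = 5$ and $1-4-1 = -4$, and similarly for the second). The $2\times 2$ coefficient matrix is symmetric with eigenvalues $1$ and $9$ (and eigenvectors $(1,1)^T$ and $(1,-1)^T$), which in particular recovers the paper's hint $\Delta(u+v) = (u+v) + O(u^2+v^2)$ together with the parallel relation $\Delta(u-v) = 9(u-v) + O(u^2+v^2)$. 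Both decoupled linearizations are $-\Delta + \mu$ operators with $\mu>0$, whose radial kernels decay exponentially through the modified Bessel function $K_0$.

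Next I would work with the scalar quantity $\psi(r) := u^2(r) + v^2(r) \geq 0$. Away from the origin,
\begin{equation*}
\tfrac{1}{2}\Delta\psi \;=\; u\Delta u + v\Delta v + |\nabla u|^2 + |\nabla v|^2 \;\geq\; (5u^2 - 8uv + 5v^2) - C\psi^{3/2},
\end{equation*}
and the algebraic identity $5u^2 - 8uv + 5v^2 = (u^2+v^2) + 4(u-v)^2 \geq \psi$ yields the key differential inequality $\Delta\psi \geq 2\psi - C\psi^{3/2}$. Using the topological convergence $(u,v)\to(0,0)$, I choose $R_0$ so large that $C\psi(r)^{1/2} \leq 1$ for $r\geq R_0$; then $\Delta\psi \geq \psi$ on $\{r\geq R_0\}$.

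The final step is a standard barrier argument. Let $\phi(r) = M K_0(r)$, where $K_0$ satisfies $\Delta K_0 = K_0$ (radial Laplacian) and $K_0(r) \sim \sqrt{\pi/(2r)}\,e^{-r}$ as $r\to\infty$. Choose $M$ so that $MK_0(R_0)\geq \psi(R_0)$, and set $h := \psi - MK_0$. Then $\Delta h \geq h$ on $\{r>R_0\}$, $h(R_0)\leq 0$, and $h(r)\to 0$ as $r\to\infty$. A strictly negative interior minimum at some $r^*$ would force both $\Delta h(r^*)\geq 0$ and $\Delta h(r^*)\geq h(r^*)<0$, a contradiction. Hence $\psi(r) \leq MK_0(r) \leq Ce^{-r}/\sqrt{r}$, from which $|u(r)|,|v(r)| \leq Ce^{-r/2}r^{-1/4}$, i.e. exponential decay.

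The main technical point is that the cubic remainder $C\psi^{3/2}$ must not spoil the linear inequality $\Delta\psi\geq\psi$; this is handled for free by restricting to a tail $\{r\geq R_0\}$ on which $\psi$ is already arbitrarily small. Bootstrapping this crude decay (or applying the same barrier machinery directly to $u+v$ and $u-v$ with the sharper eigenvalue information) would upgrade to the optimal rate $e^{-r}$ predicted by the smallest eigenvalue $1$ of the linearization.
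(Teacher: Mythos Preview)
Your approach is correct and is exactly what the paper gestures at: linearize at $(0,0)$, observe that the coefficient matrix is positive definite (eigenvalues $1$ and $9$), and deduce exponential decay by an elliptic comparison. The paper itself gives no proof beyond the one-line remark $\Delta(u+v)=(u+v)+O(|u+v|^2)$ and a citation to an external reference; your self-contained barrier argument with $\psi=u^2+v^2$ and the modified Bessel supersolution $MK_0$ fills this in explicitly.

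One small slip: in the maximum-principle step you should rule out a \emph{positive interior maximum} of $h=\psi-MK_0$, not a negative interior minimum. At a positive maximum $r^*$ one has $\Delta h(r^*)\leq 0$ while the inequality $\Delta h\geq h$ forces $\Delta h(r^*)\geq h(r^*)>0$, which is the actual contradiction; a negative minimum gives none, since $\Delta h(r^*)\geq 0$ and $\Delta h(r^*)\geq h(r^*)<0$ are perfectly compatible. With this correction the argument goes through and yields $\psi\leq MK_0$, hence the claimed exponential decay.
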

For the topological solutions, the $L^1$-integrability of $f_1$ and $f_2$ follows immediately.\par

\begin{lemma}\label{lemma82}
Suppose $(u,v)$ is a non-topological solution of \eqref{QAZ}. Then
$$\beta_1>2,\,\,\beta_2>2,$$
and
\begin{equation}\label{e006}
\beta_1^2+\beta_1\beta_2+\beta_2^2-4(N_1^2+N_1N_2+N_2^2)>6\Big(2(N_1+N_2)+(\beta_1+\beta_2)\Big).
\end{equation}
\end{lemma}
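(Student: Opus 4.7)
The plan is to pass to the limit $r\to\infty$ in the Pohozaev identity \eqref{qwe008}, combine it with a pointwise AM--GM bound to derive the desired strict inequality, and use an asymptotic ODE analysis to exclude the borderline cases $\beta_i=2$. Let $P=e^u-e^{2u}+e^{u+v}+e^v-e^{2v}$; the identity \eqref{qwe008} reads $r^2(u_r^2+u_rv_r+v_r^2)+3r^2P=6\int_0^rsP\,ds+4(N_1^2+N_1N_2+N_2^2)$, and one checks $2P=f_1+f_2+e^u+e^v$.

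By Lemma~\ref{lemma12}, $f_1,f_2\in L^1(r\,dr)$; since $u,v\to-\infty$ forces $f_i\geq\tfrac12e^{u_i}$ eventually, also $\int_0^\infty s(e^u+e^v)\,ds<\infty$, so both sides of \eqref{qwe008} have finite limits as $r\to\infty$. As $r^2(u_r^2+u_rv_r+v_r^2)\to\beta_1^2+\beta_1\beta_2+\beta_2^2$, the term $r^2P(r)$ must have some nonnegative limit $L$. If $L>0$, then $P(r)\geq c/r^2$ eventually, contradicting $\int_0^\infty sP\,ds<\infty$; hence $L=0$, so $r^2e^u,r^2e^v\to 0$. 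The differentiation $(r^2e^u)_r=re^u(2+ru_r)$ then forces $\beta_1\geq 2$: if $\beta_1<2$, the right side is eventually positive and $r^2e^u$ would be positive and eventually increasing, incompatible with $r^2e^u\to 0$. Symmetrically $\beta_2\geq 2$.

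To rule out $\beta_i=2$ I pass to the asymptotic ODE. Setting $t=\log r$, $\tilde\psi(t)=u(e^t)+\beta_1t$, $\tilde\eta(t)=v(e^t)+\beta_2t$ (so that $\tilde\psi',\tilde\eta'\to 0$), the system becomes
\[
\tilde\psi''=-2r^{2-\beta_1}e^{\tilde\psi}+r^{2-\beta_2}e^{\tilde\eta}+O(r^{-2}),\qquad\tilde\eta''=r^{2-\beta_1}e^{\tilde\psi}-2r^{2-\beta_2}e^{\tilde\eta}+O(r^{-2}).
\]
If some $\beta_i$ equals $2$, the leading-order system is Liouville (when only one $\beta_i=2$) or the SU(3) Toda system (when both equal $2$), with the remaining terms decaying exponentially in $t$. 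Using the (approximately) conserved energy — the scalar $\tfrac12(\tilde\psi')^2+2e^{\tilde\psi}$ in the first case, or the joint SU(3)--Toda energy $E(t)=(\tilde\psi')^2+\tilde\psi'\tilde\eta'+(\tilde\eta')^2+3(e^{\tilde\psi}+e^{\tilde\eta})$ in the second — one shows the energy decays exponentially in $t$. Since the energy dominates $e^{\tilde\psi}$, this forces $\tilde\psi(t)\leq-\delta t+O(1)$ for some $\delta>0$, i.e.\ linear decay; but $\tilde\psi'\to 0$ prohibits any positive linear rate of decay, contradiction. This asymptotic ODE step is the principal obstacle, and the $\beta_1=\beta_2=2$ case in particular relies on the integrable SU(3) Toda structure of the leading-order system.

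With $\beta_i>2$ established, the limit of \eqref{qwe008} together with $\int_0^\infty s(f_1+f_2)\,ds=2(N_1+N_2)+\beta_1+\beta_2$ (obtained by integrating the sum of the two equations and comparing boundary behavior at $0$ and $\infty$) gives
\[
\beta_1^2+\beta_1\beta_2+\beta_2^2-4(N_1^2+N_1N_2+N_2^2)=6(N_1+N_2)+3(\beta_1+\beta_2)+3\int_0^\infty s(e^u+e^v)\,ds.
\]
The pointwise identity $(e^u+e^v)-(f_1+f_2)=2(e^{2u}+e^{2v}-e^{u+v})\geq 2e^{u+v}>0$, which follows from the AM--GM bound $e^{2u}+e^{2v}\geq 2e^{u+v}$, yields $\int_0^\infty s(e^u+e^v)\,ds>\int_0^\infty s(f_1+f_2)\,ds=2(N_1+N_2)+\beta_1+\beta_2$. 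Substituting produces $\beta_1^2+\beta_1\beta_2+\beta_2^2-4(N_1^2+N_1N_2+N_2^2)>6\bigl(2(N_1+N_2)+\beta_1+\beta_2\bigr)$, the required strict inequality.
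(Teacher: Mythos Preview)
Your proposal is correct, but your handling of the borderline cases $\beta_i=2$ is considerably more elaborate than the paper's. The derivation of the strict inequality \eqref{e006}, the proof that $e^u,e^v\in L^1(r\,dr)$, and the argument that $\beta_i\geq 2$ are essentially the same as in the paper (your $r^2e^u\to 0$ monotonicity argument is a mild variant of the paper's direct contradiction with $e^u\in L^1$). For excluding $\beta_i=2$, however, the paper avoids any asymptotic ODE or energy analysis:
\begin{itemize}
\item If $\beta_1=2<\beta_2$, then eventually $u>v$ and $u<\log\tfrac14$, so $g(v)<g(u)$ and hence $f_2-f_1<0$; together with $-f_1<0$ this makes $ru_r(r)=2N_1+\int_0^r s\bigl((f_2-f_1)-f_1\bigr)\,ds$ strictly decreasing on a tail, forcing $ru_r(r)>-\beta_1=-2$ there and contradicting $e^u\in L^1$.
\item If $\beta_1=\beta_2=2$, one substitutes directly into the Pohozaev inequality $r^2(u_r^2+u_rv_r+v_r^2)+3r^2P>6\bigl(2(N_1+N_2)-r(u+v)_r\bigr)+4(N_1^2+N_1N_2+N_2^2)$; the left side tends to $12+3r^2P$ while the right exceeds $24$, giving $r(e^u+e^v)\geq c/r$ and again contradicting $L^1$.
\end{itemize}
Your Liouville/Toda energy route does work (once you note that $rv_r\to-\beta_2$ gives $r^2e^v=O(r^{2-\beta_2+\epsilon})$, so the error terms in $\tilde\psi''+2e^{\tilde\psi}$ really are exponentially small in $t$, making $E(t)=O(e^{-\delta t})$ and hence $\tilde\psi\leq -\delta t+C$, incompatible with $\tilde\psi'\to 0$), and it has the conceptual appeal of identifying the leading-order integrable structure. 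But the paper's argument is a two-line monotonicity check in one case and a one-line substitution in the other, and needs no control on $\tilde\eta$ or on rates of decay beyond what $L^1$ already gives.
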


\begin{proof}

{\it Step 1.} {\it We first prove the inequality \eqref{e006} with the assumptions $\beta_1>2$ and $\beta_2>2$.}\par
Using $r(u+v)_r(r)=2(N_1+N_2)-\int_0^{r}s(f_1+f_2)(s)ds$, the Pohozaev identity becomes

\begin{equation}\label{e007}
\begin{split}
&r^2(u_r^2(r)+u_r(r)v_r(r)+v_r^2(r))+3r^2(e^{u(r)}-e^{2u(r)}+e^{u(r)+v(r)} +e^{v(r)}-e^{2v(r)})\\
>&6\Big(2(N_1+N_2)-r(u+v)_r(r)\Big)+4(N_1^2+N_1N_2+N_2^2).
\end{split}
\end{equation}
Since  we suppose  $\beta_1>2$ and $\beta_2>2$, for $r$ sufficiently large, we have
$$u(r)=-\beta_1\log r +O(1)\mbox{   and   }v(r)=-\beta_2\log r +O(r).$$
Plugging these into \eqref{e007} and letting $r$ tend to $\infty$, \eqref{e006}  follows.\\
{\it Step 2.}
Since $(u,v)$ is a non-topological solution, there exists $R_1>0$ such that
$$u(r)\mbox{   and   }v(r)\mbox{   are less than   }\log\frac{1}{4} \mbox{   for  }r\geq R_1.$$
By this and Lemma \ref{lemma12}, we obtain
$$\int_{R_1}^{\infty}s\Big(\frac{1}{2}e^{u(s)}+e^{(u+v)(s)}\Big)ds\leq \int_{R_1}^{\infty}sf_1(s)ds<\infty$$
and
$$\int_{R_1}^{\infty}s\Big(\frac{1}{2}e^{v(s)}+e^{(u+v)(s)}\Big)ds\leq \int_{R_1}^{\infty}sf_2(s)ds<\infty.$$
We conclude that
\begin{equation}\label{e008}
e^{u},\,\,e^{v}\mbox{  and   } e^{u+v}\mbox{   are   in   } L^1(\mathbb{R}^2).
\end{equation}
{\it Step 3.} We show that $\beta_1\geq 2$ and $\beta_2\geq 2$.\par
By the symmetry of the equation, we only show that $\beta_1\geq 2$. Suppose for the sake of contradiction that $\beta_1< 2$, then
$$ru_r(r)\geq -2\mbox{ for  }r\mbox{   sufficiently large,}$$
and thus $u(r)\geq -2\log r+O(1)$  for  $r$   sufficiently large. It contradicts to \eqref{e008}.\\
{\it Step 4.} To show $\beta_1>2$  and $\beta_2>2$, we need to exclude the following other possible cases:
\begin{itemize}
\item[(1)] $\beta_1=2$, $\beta_1<\beta_2$.
\item[(2)]  $\beta_2=2$, $\beta_2<\beta_1$.
\item[(3)]  $\beta_1=2$, $\beta_2=2$.
\end{itemize}
The first two cases are symmetric, hence, we only consider the first case.\\
\\
{\it Step 4.1.}  Suppose  $\beta_1=2$  and  $\beta_1<\beta_2$. Then there exists $R_2$ such that
\begin{equation}\label{e009}
u(r)<\log\frac{1}{4}\mbox{   and   } u(r)>v(r)\mbox{  for  }r\geq R_2.
\end{equation}
Recall that
$$ru_r(r)=2N_1+\int_0^rs\big( (f_2-f_1)(s)-f_1(s)        \big)ds. $$
Note that $(f_2-f_1)(r)=g(v(r))-g(u(r))<0$ and $-f_1(r)<0$ on $(R_2,\infty)$
where \eqref{e009} is used. Thus, $ru_r(r)$ is a decreasing function on $(R_2,\infty)$, which implies
$ru_r(r)> -2$ on $(R_2,\infty)$. It makes the $L^1$-integrality of $e^u$ fail.  Hence, this case is impossible.\\
{\it Step 4.2.}  Suppose $\beta_1=2$ and $\beta_2=2$.\par
For $r$ sufficiently large, \eqref{e007} becomes
\begin{equation}
\begin{split}
&12+3r^2\Big(e^{u(r)}-e^{2u(r)}+e^{v(r)}-e^{2v(r)}\Big)\\
> & 6( 2(N_1+N_2)+4 )+4(N_1^2+N_1N_2+N_2^2)+o(1)
\end{split}
\end{equation}
which implies
$$
r\Big(e^{u(r)}+e^{v(r)}\Big)\geq \frac{3}{r}.
$$
It contradicts to \eqref{e008}.\par
We conclude that $\beta_1>2$ and $\beta_2>2$.
\end{proof}

The mixed-type solutions of \eqref{QAZ} will be discussed as follows.
\begin{lemma}\label{lemma83}
Suppose $(u,v)$ is a mixed-type solution. Then
\begin{itemize}
\item[(1)] $\beta_1=0$ and $\beta_2>2$ if $(u,v)\to(\log\frac{1}{2},-\infty)$ as $r\to\infty$. Moreover,
$u\to\log\frac{1}{2}$  as $r\to\infty$.

\item[(2)] $\beta_2=0$ and $\beta_1>2$ if $(u,v)\to(-\infty, \log\frac{1}{2})$ as $r\to\infty$.  Moreover,
$v\to\log\frac{1}{2}$  as $r\to\infty$.
\end{itemize}
\end{lemma}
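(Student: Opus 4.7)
The plan is to prove part (1); part (2) follows by the symmetry $(u,v)\leftrightarrow(v,u)$, $(N_1,N_2)\leftrightarrow(N_2,N_1)$ of \eqref{QAZ}. I would first apply Theorem \ref{thm1}(1) to obtain $u(r),v(r)<\log\frac{1}{2}$ for $r\geq R_0$, which forces $f_1,f_2>0$ on $(R_0,\infty)$ and, via Lemma \ref{lemma12}, $f_1,f_2\in L^1(\mathbb{R}^2)$. Integrating the radial equations \eqref{QWE} therefore guarantees that the limits $-\beta_1=\lim_{r\to\infty}ru_r(r)$ and $-\beta_2=\lim_{r\to\infty}rv_r(r)$ exist and are finite.

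Next I would show $\beta_1=0$. Because $u(r)\to\log\frac{1}{2}$ is a finite limit, if $\beta_1\neq 0$ then for any $\epsilon\in(0,|\beta_1|)$ and all sufficiently large $r$ one has $|u_r(r)|>(|\beta_1|-\epsilon)/r$ with a fixed sign; integrating gives $|u(r)-u(R)|>(|\beta_1|-\epsilon)\log(r/R)\to\infty$, contradicting the finite limit of $u$. Hence $\beta_1=0$.

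The main step is to prove $\beta_2>2$. I would introduce $L(r):=r(u+2v)_r(r)$. Using $(ru_r)_r=r(f_2-2f_1)$ and $(rv_r)_r=r(f_1-2f_2)$, one immediately computes $L_r(r)=-3rf_2(r)$, which is strictly negative on $(R_0,\infty)$. Hence $L$ is strictly decreasing there, and since $L(r)\to-\beta_1-2\beta_2=-2\beta_2$, one obtains $L(r)>-2\beta_2$ on $(R_0,\infty)$, i.e.\ $(u+2v)_r(r)>-2\beta_2/r$. Integrating from $R_0$ to $r$ and using the boundedness of $u$ (from $u\to\log\frac{1}{2}$) yields a constant $C$ with $v(r)>C-\beta_2\log r$ for all $r\geq R_0$. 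Therefore $re^{v(r)}>e^{C}r^{1-\beta_2}$. On the other hand, for $r$ large enough that $v(r)<\log\frac{1}{4}$ one has $f_2\geq \frac{1}{2}e^{v}$, and combining with Lemma \ref{lemma12} gives $\int^{\infty}re^{v(r)}\,dr<\infty$, which forces $\beta_2>2$. The final clause ``Moreover, $u\to\log\frac{1}{2}$'' is already part of the mixed-type hypothesis, so no separate argument is needed.

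The main obstacle is upgrading the weak bound $\beta_2\geq 2$ (which would follow automatically from $re^v\in L^1$ and $rv_r\to-\beta_2$) to the strict inequality $\beta_2>2$. The crucial input is the strict monotonicity of $L$ on $(R_0,\infty)$, itself a consequence of the strict positivity $f_2>0$ there provided by Theorem \ref{thm1}(1); this strict monotonicity is what converts the asymptotic bound $(u+2v)_r(r)\to-2\beta_2/r$ into the pointwise lower estimate $v(r)>C-\beta_2\log r$, and the mismatch with integrability of $re^v$ then rules out the borderline case $\beta_2=2$.
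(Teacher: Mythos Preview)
Your argument is correct and follows essentially the same mechanism as the paper's proof: use Theorem~\ref{thm1}(1) to get positivity of $f_1,f_2$ on $(R_0,\infty)$, deduce from $u\to\log\frac12$ that $\beta_1=0$, and then exploit the strict monotonicity of a suitable combination $rW_r$ to obtain a pointwise lower bound that is incompatible with $L^1$-integrability unless the decay exponent exceeds $2$.

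The only cosmetic difference is the choice of $W$: the paper takes $W=u+v$, so $(r(u+v)_r)_r=-r(f_1+f_2)<0$, and if $\beta_1+\beta_2\le2$ one gets $(u+v)(r)>-2\log r+C$, contradicting $\int se^{u+v}\,ds<\infty$; combined with $\beta_1=0$ this yields $\beta_2>2$. You instead take $W=u+2v$, use $(r(u+2v)_r)_r=-3rf_2<0$ and the boundedness of $u$ to obtain $v(r)>C-\beta_2\log r$, then contradict $\int se^{v}\,ds<\infty$ when $\beta_2\le2$. Both routes are short and rely on the same strict-monotonicity-versus-integrability idea; neither offers a real advantage over the other.
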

\begin{proof}
By the symmetry of the equations, we only need to prove the first part.

{\it Step 1.} {\it We show that $\beta_1+\beta_2>2$. }\par
By Theorem \ref{thm1}, $u$ and $v$ are less than $\log\frac{1}{2}$ on $(R_0,\infty)$. Thus,
$r(u+v)_r(r)$ is a decreasing function on $(R_0,\infty)$. If $\beta_1+\beta_2\leq 2$, then
$$r(u+v)_r(r)> -2\mbox{   on    }(R_0,\infty).$$
It follows that $\int_{R_0}^{\infty}se^{(u+v)(s)}ds=\infty$, which is a contradiction to Lemma \ref{lemma12}.\\
{\it Step 2.} {\it We show $\beta_2>2$.}\par
By Lemma \ref{lemma12},  $\beta_1$ and $\beta_2$ exist and are finite.
Since $\lim_{r\to\infty} u(r)=\log\frac{1}{2}$, then $\beta_1=0$.
By the step 1, $\beta_2>2$.\\
\end{proof}

\section{THE STRUCTURE OF NON-TOPOLOGICAL SOLUTIONS }

We denote $(u(r;\alpha_1,\alpha_2), v(r;\alpha_1,\alpha_2)$ be  a solution of \eqref{QWE} with
the initial data
\begin{equation}
\left\{
\begin{array}{l}
u(r ) = 2N_1 \log r +\alpha_1 + o(1)\\
v(r ) = 2N_2 \log r +\alpha_2 + o(1)
\end{array}
\right.
\text{   as  }  r  \to 0^+.
\end{equation}
Recall the region of initial data of the non-topological solutions of \eqref{QWE}.
\begin{equation}
\Omega=\{(\alpha_1,\alpha_2)|\, (u(r;\alpha_1,\alpha_2),v(r;\alpha_1,\alpha_2))\text{ is a  non-topological solution of }(\ref{QWE})\}.
\end{equation}
We prove Theorem \ref{thm5}.\\
\\
{\bf Theorem {\ref{thm5}} }\\
{\it
 $\Omega$ is an open set. Furthermore, if $\alpha=(\alpha_1,\alpha_2)\in\partial\Omega$, then $u(r;\alpha)$ is either a topological solution or a mixed-type solution.
}

\begin{proof}
We shall prove that if $(\alpha_1,\alpha_2)\in \Omega$, then $(u(r;\alpha'_1,\alpha'_2),v(r;\alpha'_1,\alpha'_2))$ is an entire solution of $(\ref{QWE})$ provided $(\alpha'_1,\alpha'_2)$ close to  $(\alpha_1,\alpha_2)$.
 For convenience, we denote $\alpha=(\alpha_1,\alpha_2)$, $\alpha'=(\alpha'_1,\alpha'_2)$, $u(r;\alpha_1,\alpha_2)=u(r;\alpha)$, $v(r;\alpha_1,\alpha_2)=v(r;\alpha)$, $u(r;\alpha'_1,\alpha'_2)=u(r;\alpha')$ and $v(r;\alpha'_1,\alpha'_2)=v(r;\alpha')$.\\
{\it Step 1.} Assume that
\begin{equation}
\begin{split}
u(r;\alpha)=-\beta_1(\alpha)\log r+O(1)\\
v(r;\alpha)=-\beta_2(\alpha)\log r+O(1)\\
\end{split}
\end{equation}
at infinity.
Since $\beta_i(\alpha)>2$, $i=1,2$, we  write
$$\beta_i(\alpha)=(2+\delta_i(\alpha)),\,\,\,i=1,2.$$
Here $\delta_i>0$, $i=1,2$. \par

Let $\delta=\min\{ \delta_1(\alpha),\delta_2(\alpha) \}$.  There exists $r_0>0$, such that for $r\geq r_0$,
\begin{equation}
ru_r(r;\alpha)<-(2+\frac{\delta}{2}),\,\, rv_r(r;\alpha)<-(2+\frac{\delta}{2})
\end{equation}
and
\begin{equation}
e^{u(r;\alpha)+2\log r}<\frac{\delta^2}{192},\,\,e^{v(r;\alpha)+2\log r}<\frac{\delta^2}{192}.
\end{equation}
By the continuity, for  $\alpha'$ close to $\alpha$, one has the followings:
\begin{equation}\label{qwe003}
r_0u_r(r_0;\alpha')<-(2+\frac{\delta}{4}),\,\, r_0v_r(r_0;\alpha')<-(2+\frac{\delta}{4})
\end{equation}
and
\begin{equation}
e^{u(r_0;\alpha')+2\log r_0}<\frac{\delta^2}{96} ,\,\,\,e^{v(r_0;\alpha)+2\log r_0}<\frac{\delta^2}{96}.
\end{equation}
By Theorem \ref{thm1}, we can assume that
\begin{equation}\label{qwe004}
u(r_0, \alpha),\,\, v(r_0, \alpha),\,\, u(r_0, \alpha')\mbox{    and     } v(r_0, \alpha')\mbox{    are   less than  }\log\frac{1}{2}.
\end{equation}

{\it Step 2. }{\it We show that $ru_r(r;\alpha')<-(2+\frac{\delta}{8})$  and   $rv_r(r;\alpha')<-(2+\frac{\delta}{8})$  for
$ r\in(r_0,\infty)$.}\par
We prove it by contradiction. By symmetry,  we can  assume, without loss of generality, that there exists $r_1\in(r_0,\infty)$ such that
\begin{equation}\label{qwe001}
\begin{array}{ll}
(1)& r_1u_r(r_1;\alpha')=-(2+\frac{\delta}{8}).\\
(2)& ru_r(r;\alpha')<-(2+\frac{\delta}{8})\text{  on   }(r_0,r_1).\\
(3)& rv_r(r;\alpha')<-(2+\frac{\delta}{8})\text{  on   }(r_0,r_1).
\end{array}
\end{equation}
We estimate
\begin{equation}\label{QQA}
\begin{split}
&r_1u_r(r_1,\alpha')-r_0u_r(r_0,\alpha')\\
=&\int_{r_0}^{r_1}s\Big[ f_2(u(s;\alpha'),v(s;\alpha')) -2f_1(u(s;\alpha'),v(s;\alpha')) \Big] ds.
\end{split}
\end{equation}
By \eqref{qwe003} and  \eqref{qwe001}, the left side of \eqref{QQA} gives
\begin{equation}\label{PLO}
r_1u_r(r_1;\alpha')-r_0u_r(r_0,\alpha')>\frac{\delta}{8}
\end{equation}
Again, by \eqref{qwe001}, for $r\in [r_0,r_1]$,
\begin{equation}\label{qwe002}
v(r;\alpha')\leq -(2+\frac{\delta}{8})\log r +\Big( v(r_0;\alpha') + (2+\frac{\delta}{8})\log r_0            \Big).
\end{equation}\par
We estimate the right hand side of \eqref{QQA}.
\begin{equation}
\begin{split}
&\int_{r_0}^{r_1}s\Big[ f_2(u(s;\alpha'),v(s;\alpha')) -2f_1(u(s;\alpha'),v(s;\alpha'))        \Big]ds\\
\leq &\int_{r_0}^{r_1}s f_2(u(s;\alpha'),v(s;\alpha')) ds\\
\leq &\frac{3}{2} \int_{r_0}^{r_1}s e^{v(s;\alpha')} ds\\
\leq & \frac{3}{2} \int_{r_0}^{r_1}s e^{-(2+\frac{\delta}{8})\log s + v(r_0;\alpha') + (2+\frac{\delta}{8})\log r_0   }ds\\
= & \frac{3}{2} e^{v(r_0;\alpha') + 2\log r_0 } \frac{8}{\delta}\Big(1-(  \frac{r_0}{r_1} )^{\frac{\delta}{8}} \Big)< \frac{\delta}{8}
\end{split}
\end{equation}
where \eqref{qwe004} and  \eqref{qwe002} are used.
Obviously, it is a contradiction  to \eqref{PLO}.
Thus, $(u(r,\alpha') ,v(r,\alpha'))$ is a non-topological solution.\par
The second part follows obviously from the first part and Theorem \ref{cor1}.

\end{proof}
%
%

\bibliography{nsf}
\bibliographystyle{plain}

\end{document}